\theoremstyle{plain}
\newtheorem{thm}{Theorem}[section]
\newtheorem{lem}[thm]{Lemma}
\newtheorem{conj}[thm]{Conjecture}
\newtheorem{pro}[thm]{Proposition}
\theoremstyle{definition}
\newtheorem{notation}[thm]{Notation}
\newtheorem{remark}[thm]{Remark}
\newtheorem{hypothesis}[thm]{Hypothesis}
\newtheorem*{thmB}{THEOREM B}
\newtheorem*{conjA}{CONJECTURE A}
\newtheorem*{thmC}{THEOREM C}
\numberwithin{equation}{section}
\newcommand{\normal}{\trianglelefteq}
\newcommand{\Syl}{\operatorname{Syl}}
\newcommand{\Aut}{\operatorname{Aut}}
\newcommand{\Ker}{\operatorname{Ker}}
\newcommand{\Irr}{\operatorname{Irr}}
\newcommand{\FF}{\mathbb{F}}
\newcommand{\C}{\mathbf{C}}
\newcommand{\bG}{\mathbf{G}}
\newcommand{\bL}{\mathbf{L}}
\newcommand{\bN}{\mathbf{N}}
\def\irrp#1{{\rm Irr}_{p'}(#1)}
\def\C{{\mathbb C}}
\def\irr#1{{\rm Irr}(#1)}
\def\irrpr#1{{\rm Irr}_{p',{\rm rel}}(#1)}
\def\cent#1#2{{\bf C}_{#1}(#2)}
\def\syl#1#2{{\rm Syl}_#1(#2)}
\def\nor{\trianglelefteq\,}
\def\zent#1{{\bf Z}(#1)}
\def\ker#1{{\rm ker}(#1)}
\def\norm#1#2{{\bf N}_{#1}(#2)}
\def\sbs{\subseteq}
\newcommand{\Out}{{\mathrm {Out}}}
\newcommand{\diag}{{\mathrm {diag}}}
\newcommand{\Spec}{{\mathrm {Spec}}}
\newcommand{\QQ}{{\mathbb Q}}
\newcommand{\ZZ}{{\mathbb Z}}
\newcommand{\bT}{\mathbf{T}}
\newcommand{\ta}{\hspace{0.5mm}^{2}\hspace*{-0.2mm}}
\newcommand{\tb}{\hspace{0.5mm}^{3}\hspace*{-0.2mm}}
\newcommand{\bC}{{\mathbf{C}}}
\newcommand{\bS}{{\mathbf{S}}}
\newcommand{\bB}{{\mathbf{B}}}
\newcommand{\bU}{{\mathbf{U}}}
\newcommand{\bZ}{{\mathbf{Z}}}
\newcommand{\bX}{{\mathbf{X}}}
\newcommand{\Al}{\textup{\textsf{A}}}
\newcommand{\Sy}{\textup{\textsf{S}}}
\begin{document}
\title{Sum of the squares of the $p'$-character degrees}

\author[Nguyen N. Hung]{Nguyen N. Hung}
\address[Nguyen N. Hung]{Department of Mathematics, The University of Akron, Akron,
OH 44325, USA}
\email{hungnguyen@uakron.edu}

\author[J. Miquel Mart\'inez]{J. Miquel Mart\'inez}
\address[J. Miquel Mart\'inez]{Departament de Matem\`atiques, Universitat de Val\`encia, 46100 Burjassot,
Val\`encia, Spain}
\email{josep.m.martinez@uv.es}

\author[Gabriel Navarro]{Gabriel Navarro}
\address[Gabriel Navarro]{Departament de Matem\`atiques, Universitat de Val\`encia, 46100 Burjassot,
Val\`encia, Spain}
\email{gabriel@uv.es}

\thanks{The first author gratefully acknowledges the support of the AMS-Simons Research Enhancement
Grant (AWD-000167 AMS). The work of the second and third authors is
supported by Grant PID2022-137612NB-I00 funded by MCIN/AEI/
10.13039/501100011033 and ERDF ``A way of making Europe." We thank
Gunter Malle for helpful conversations and for noticing gaps in the
proofs of Proposition~\ref{pro:giannelli implies A} and
Lemma~\ref{lem:SU} in an earlier version. We thank the referees for their thorough reading and their many comments and suggestions that helped improve the exposition of this paper.}

\keywords{}

\subjclass[2010]{Primary 20D20; Secondary 20C15}

\begin{abstract}
We study the sum of the squares of the irreducible character degrees
not divisible by some prime $p$ of a finite group, and its
relationship with the corresponding quantity in a $p$-Sylow
normalizer. This leads to study a recent conjecture by E.~Giannelli,
which we prove for $p=2$ and in some other cases.
\end{abstract}

\maketitle

\section{Introduction}
There are some strikingly simple and seemingly innocent statements
that follow from the recently proven McKay conjecture \cite{CS} (or
from the techniques developed to prove it) whose validity appears to
resist  any elementary justification. For instance, W. Feit was
interested in proving that if $P$ is an abelian Sylow $p$-subgroup
of $G$, then $k(G)\ge k(\norm GP)$, where $k(G)$ is the number of
conjugacy classes of the finite group $G$ (\cite{F}). Although this
inequality now follows as a consequence of \cite{CS},  no
alternative proof is currently known. (It is worth noting that
equality happens if and only if $P\nor G$ by the It\^o--Michler
theorem.)

This current project -- long awaited by the third author -- was
expected to provide another example of such phenomena. And yet,
 it does not. What  was anticipated to be a theorem
remains, for now, a conjecture. As usual, $\irrp G$ is the set of
irreducible complex characters of the finite group $G$ whose degree
is not divisible by the prime $p$, and $P'=[P,P]$ is the derived
subgroup of the group $P$.
   \begin{conjA}\label{conj:A}
   {\sl Let $G$ be a finite group and $P \in \syl pG$.
   Then
   $$\sum_{\chi \in \irrp G} \chi(1)^2 \ge |\norm GP:P'|$$
   with equality if and only  $\norm GP$ has a normal complement in $G$.}
     \end{conjA}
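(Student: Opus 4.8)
The plan is to first rewrite the right‑hand side intrinsically and then to prove the resulting local–global inequality by a reduction to almost simple groups. Writing $N=\norm GP$ (so that $P\nor N$ and $P\in\syl pN$), I would begin by showing that $\irrp N=\irr{N/P'}$, which yields $\sum_{\psi\in\irrp N}\psi(1)^2=|N:P'|$. Indeed, $P/P'$ is an abelian normal Sylow $p$‑subgroup of $N/P'$, so It\^o's theorem gives $\psi(1)\mid|N:P|$, a $p'$‑number, for every $\psi\in\irr{N/P'}$; conversely, if $\chi\in\irrp N$ and $\theta\in\irr P$ lies under $\chi$, then $\theta(1)\mid\chi(1)$ forces $\theta(1)=1$, hence $P'\le\ker\theta$, and since $P'$ is characteristic in $P$ it lies in the kernel of every $N$‑conjugate of $\theta$, so $P'\le\ker\chi$. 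Thus Conjecture~A is equivalent to
\[
\sum_{\chi\in\irrp G}\chi(1)^2\ \ge\ \sum_{\psi\in\irrp N}\psi(1)^2 ,
\]
with equality exactly when $N$ has a normal complement in $G$.

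Since a character has $p'$‑degree precisely when it sits with height zero in a block of maximal defect, the left‑hand side equals $\sum_B\sum_{\chi\in\Irr_0(B)}\chi(1)^2$ over the $p$‑blocks $B$ of $G$ with defect group $P$, and similarly for $N$; as Brauer's first main theorem pairs these blocks as $B\leftrightarrow B^N$, one may hope to prove the blockwise inequality $\sum_{\chi\in\Irr_0(B)}\chi(1)^2\ge\sum_{\psi\in\Irr_0(B^N)}\psi(1)^2$, a strengthening of the Alperin–McKay equality of height‑zero counts. It would be very convenient to have a height‑zero bijection $\Irr_0(B)\to\Irr_0(B^N)$ that never lowers degrees, but McKay‑type correspondences are not degree‑monotone, so the inequality must be established in bulk rather than term by term.

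Next I would run the usual inductive machinery on a minimal counterexample $G$. The reduction by a normal $p'$‑subgroup $M$ needs care, because passing to $G/M$ multiplies the right‑hand side by $|\cent MP|$: when $\cent MP=1$ the inequality descends to $G/M$ and one concludes, while the case $\cent MP\neq 1$ has to be treated separately. Combining this with the standard reductions modulo central $p$‑subgroups contained in $P'$ and modulo central products should confine $G$ to something very close to quasi‑simple, leaving one to bound $\sum_{\chi\in\irrp S}\chi(1)^2$ below by $|\norm SP:P'|$ for $S$ a covering group of a simple group, equivariantly under its outer automorphisms. For the equality clause, if $N$ has a normal complement $K$ then $K$ is a $p'$‑group with $\cent KP=1$, so by the Glauberman correspondence the trivial character is the only $P$‑invariant irreducible character of $K$; a short Clifford‑theoretic argument then shows every $\chi\in\irrp G$ is trivial on $K$, giving equality. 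The converse direction must be extracted from equality in the blockwise inequality and decoded structurally, the prototype being Thompson's theorem — a group whose irreducible characters all have degree $1$ or a multiple of $p$ has a normal $p$‑complement — which is exactly the case $N=P$.

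The hard part, as always with such local–global statements, is the simple groups. For $p=2$ the Sylow $2$‑normalizers and the odd‑degree irreducible characters of the simple groups are known precisely enough that the inequality can be verified type by type, which is why Conjecture~A is a theorem there. For odd $p$ the groups of Lie type — and, as the acknowledgments signal, the unitary groups in particular — are the genuine obstruction: one needs uniform lower bounds on $\sum_{\chi\in\irrp S}\chi(1)^2$ together with a tight description of $\norm SP$, whose structure depends on the multiplicative order of $q$ modulo $p$ in a way that defeats the clean estimates available when $p=2$. That is the step which, for the moment, keeps Conjecture~A a conjecture rather than a theorem.
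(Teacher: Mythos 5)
Your opening step is fine and matches the paper: since $P/P'$ is an abelian normal Sylow $p$-subgroup of $\norm GP/P'$, one has $\irrp{\norm GP}=\irr{\norm GP/P'}$ and hence $\sum_{\psi\in\irrp{\norm GP}}\psi(1)^2=|\norm GP:P'|$, so the statement is the global--local inequality you write down. But after that the proposal is a research plan rather than a proof, and you say so yourself at the end: the blockwise inequality $\sum_{\chi\in\Irr_0(B)}\chi(1)^2\ge\sum_{\psi\in\Irr_0(B^N)}\psi(1)^2$ is never established (it is a strengthening of Alperin--McKay with no argument offered), the reduction to quasisimple groups is only gestured at (with the case $\cent MP\neq 1$ explicitly left open), and the simple-group verification is deferred even for $p=2$, where the actual content lies. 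So there is no proof here of the inequality, of either direction of the equality clause, or of the $p=2$ case.

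Two specific points of comparison with the paper. First, you dismiss the degree-monotone route (``McKay-type correspondences are not degree-monotone, so the inequality must be established in bulk''), but that is exactly the paper's strategy: Giannelli's conjecture asserts the existence of \emph{some} bijection $\irrp G\to\irrp{\norm GP}$ with $\chi^*(1)\le\chi(1)$ (it need not come from restriction or respect blocks -- the failure you have in mind, illustrated by $\mathsf{A}_5$ at $p=2$, concerns constituent-compatible bijections). The paper proves that this conjecture implies Conjecture A, reduces it to an inductive condition on quasisimple groups, and verifies that condition for $p=2$ (the unitary groups in defining characteristic requiring genuine work), which is how Theorem B is obtained; for odd $p$ Conjecture A remains open there too. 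Second, the equality clause is where the paper invests most effort and where your sketch is thinnest: the forward direction via Glauberman is plausible, but the converse is not ``decoded from the blockwise inequality'' -- in the paper, equality (given a degree non-increasing bijection) forces every degree to match, a careful ordering-by-degrees argument then shows that \emph{restriction} itself is a bijection $\irrp G\to\irrp{\norm GP}$, and only then does Theorem C (a nontrivial extension of Isaacs' character-restriction theorem, proved by a relative induction) produce the normal complement. Your analogy with Thompson's theorem identifies the prototype but supplies none of this argument.
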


The McKay theorem establishes that there exists a bijection $f:
\irrp G \rightarrow \irrp{\norm GP}$. One way to prove Conjecture A,
perhaps the most obvious,
 is to show that we can choose
$f$  to satisfy the additional condition   that $f(\chi)(1)\le
\chi(1)$ for all $\chi \in \irrp G$. This turns out to be a
conjecture by E.~Giannelli (\cite{G}), and it does not appear to
follow easily from the proof of the McKay conjecture. In
Theorem~\ref{thm:GMC reduction} below we carry out the standard
reduction of the McKay conjecture now incorporating Giannelli's
strengthening, which is therefore reduced now to a question on
simple groups.  This question is verified in a number of important
cases in Sections~\ref{sec:quasisimple} and \ref{sec:p=2} below. In
Theorem~\ref{thm:p=2Giannelli}, we completely prove it for $p=2$,
and this establishes:

       \begin{thmB}
   {\sl Conjecture A holds for $p=2$.}
     \end{thmB}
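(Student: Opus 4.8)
The plan is to combine the $p=2$ case of Giannelli's conjecture (Theorem~\ref{thm:p=2Giannelli}) with a direct computation of the right-hand side and a coprime-action analysis of the equality case. Throughout set $N=\norm GP$, and note $P\nor N$. The key preliminary observation is that $\irrp N=\Irr(N/P')$: if $P'\leq\Ker\theta$, then It\^o's theorem applied to the abelian normal subgroup $P/P'$ of $N/P'$ gives $\theta(1)\mid|N:P|$, so $p\nmid\theta(1)$; conversely, if $\theta\in\irrp N$, then any irreducible constituent of $\theta_P$ has $p$-power degree dividing $\theta(1)$, hence is linear, so $P'$ --- being characteristic in $P\nor N$ --- lies in $\Ker\theta$. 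Consequently
$$\sum_{\theta\in\irrp N}\theta(1)^2=\sum_{\theta\in\Irr(N/P')}\theta(1)^2=|N:P'|.$$
The inequality in Conjecture~A then follows at once: by Theorem~\ref{thm:p=2Giannelli} there is a bijection $f\colon\irrp G\to\irrp N$ with $f(\chi)(1)\leq\chi(1)$ for all $\chi$, so $\sum_{\chi\in\irrp G}\chi(1)^2\geq\sum_{\chi\in\irrp G}f(\chi)(1)^2=\sum_{\theta\in\irrp N}\theta(1)^2=|N:P'|$.

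For the ``if'' part of the equality statement, suppose $N$ has a normal complement $K$ in $G$. Since $P\leq N$ forces $|K|=|G:N|$ to be prime to $p$, and $\cent KP\leq\norm KP\leq N\cap K=1$, the group $P$ acts coprimely and fixed-point-freely on $K$; by the Glauberman correspondence the trivial character is the only $P$-invariant member of $\Irr(K)$. Now let $\chi\in\irrp G$ and let $\vartheta\in\Irr(K)$ lie under $\chi$; using $G=KN$ and $K\leq I_G(\vartheta)$, Clifford theory gives $\chi(1)=e\,|N:I_N(\vartheta)|\,\vartheta(1)$, so $p\nmid\chi(1)$ forces $p\nmid|N:I_N(\vartheta)|$, whence some $N$-conjugate of $\vartheta$ is $P$-invariant, hence trivial, hence $\vartheta=\mathbf 1_K$ and $K\leq\Ker\chi$. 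Thus $\irrp G=\irrp{G/K}$, and identifying $G/K$ with $N$ yields $\sum_{\chi\in\irrp G}\chi(1)^2=|N:P'|$. (This implication in fact holds for every prime $p$.)

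The converse is the delicate part, and I expect it to be the main obstacle. Assume $\sum_{\chi\in\irrp G}\chi(1)^2=|N:P'|$. Since $f$ is degree non-increasing, the computation above forces $f(\chi)(1)=\chi(1)$ for every $\chi$, so $\irrp G$ and $\Irr(N/P')$ have the same multiset of degrees, and in particular every $p'$-degree of $G$ divides $|N:P|$. One must deduce from this that $N$ has a normal complement. I would induct on $|G|$, using the inequality just proved applied to $G/\oh{p'}{G}$ to reduce to the case $\oh{p'}{G}=1$, in which ``$N$ has a normal complement'' simply means $P\nor G$, i.e.\ by It\^o--Michler that every irreducible character degree of $G$ is odd. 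What then remains is a sharpness statement for Giannelli's conjecture at $p=2$: if $\oh{p'}{G}=1$ and $P$ is not normal in $G$, the multiset of $p'$-degrees of $G$ must strictly dominate that of $\Irr(N/P')$. This is exactly the information the analysis of simple and quasisimple groups in Sections~\ref{sec:quasisimple}--\ref{sec:p=2}, fed into the reduction of Theorem~\ref{thm:GMC reduction}, is designed to provide, and carrying it through is where the real work lies.
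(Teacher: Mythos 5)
The inequality half, your identity $\sum_{\theta\in\irrp{\norm GP}}\theta(1)^2=|\norm GP:P'|$, and your Glauberman--Clifford argument for the ``if'' half of the equality statement are all correct (the last is the standard argument, and it is essentially how one concludes from condition (iii) of Theorem~\ref{thmB}). The genuine gap is the converse: from $\sum_{\chi\in\irrp G}\chi(1)^2=|\norm GP:P'|$ you must produce the normal complement, and your proposal defers exactly this step. Moreover, the strategy you sketch would not go through as stated. The reduction to $\oh{p'}G=1$ is unjustified: if $M=\oh{p'}G$, then $\irrp{G/M}$ may be a proper subset of $\irrp G$, while the target for $G/M$ is $|\norm{G/M}{PM/M}:(PM/M)'|=|\norm GP:P'|/|\norm MP|$, so both sides of the (in)equality change and nothing forces equality to pass to the quotient. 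And the ``sharpness'' statement you hope to extract (strict domination of the degree multisets whenever $\oh{p'}G=1$ and $P$ is not normal) is not what Sections~\ref{sec:quasisimple}--\ref{sec:p=2} provide: they verify the per-character degree condition of Conjecture~\ref{conj:inductive Giannelli-McKay} for quasisimple groups, and no strictness information of the kind you need survives the reduction of Theorem~\ref{thm:GMC reduction}.

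The paper closes this gap by a different mechanism that never returns to simple groups. In Proposition~\ref{pro:giannelli implies A}, equality forces $\Omega(\chi)(1)=\chi(1)$ for all $\chi\in\irrp G$, and from the resulting coincidence of degree multisets one shows that \emph{restriction} itself is a bijection $\irrp G\to\irrp{\norm GP}$: for $\psi\in\irrp{\norm GP}$ of the largest degree $a_k$, the induced character $\psi^G$ has $p'$-degree and hence a $p'$-degree constituent of degree at least $\psi(1)$, which must then have degree exactly $a_k$ and restrict to $\psi$; one then descends degree by degree. Once every member of $\irrp{\norm GP}$ is known to extend to $G$, Theorem~\ref{thmB} with $N=1$ (that is, Theorem~C, proved in Section~2 without the classification) delivers the normal complement $K$ with $K\norm GP=G$ and $\norm KP=1$. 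This restriction-bijection argument together with Theorem~C is the missing ingredient in your proof; without it, or a genuine substitute, the equality case of Conjecture~A at $p=2$ is not established.
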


Can Conjecture A be proven independently of Giannelli's
strengthening of McKay? We do not know the answer to this question.
Characters of $p'$-degree in a group tend to have much larger degree
than those in the normalizer of a Sylow $p$-subgroup,  but so far,
no effective strategy has been developed to exploit this
observation.
\medskip

%There is a part of this work that is independent of both the McKay
%conjecture and the classification of finite simple groups, which we
%are able to resolve completely, and that takes care of the equality in Theorem A
%(assuming Giannelli's conjecture).
%In fact, we prove a relative version of it with respect to a normal subgroup.   We find surprising that a character
%restriction type of theorem admits a  relative version, since these versions regarding group
%structure  occur very rarely.

 Theorem C below, whose proof is rather involved -- but does not use the Classification of Finite Simple Groups -- takes care of the equality in Conjecture A (assuming Giannelli's conjecture), and
 might have independent interest.
%Recall that if $\theta \in \irr N$ and $N\nor G$, then
%$\irr{G|\theta}$ is the set of irreducible characters of $G$ whose
%restriction to $N$ contain $\theta$ as an irreducible constituent.
%

\begin{thmC}
{\sl Let $G$ be a finite group, $p$ a prime, and $P\in \syl pG$.
Then all irreducible characters of $p'$-degree of $\norm GP$ extend
to $G$ if and only if there is $K\nor G$ such that $K\norm GP=G$ and
$\norm KP=1$.}
%{\sl Suppose that $N \nor G$, $\theta \in \irr N$ is $G$-invariant and
%extends to $NP$, where $P \in \syl pG$. Then all the irreducible
%characters $\gamma \in \irr{N\norm GP|\theta}$ such that $p$ does
%not divide $\gamma(1)/\theta(1)$ extend to $G$ if and only if there
%is a  subgroup $N \le K \nor G$ having an irreducible character $\tau \in
%\irr K$ that extends $\theta$ such that $G=K\norm GP$ and $K\cap  N\norm GP=N$.}
    \end{thmC}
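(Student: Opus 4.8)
The plan is to analyze the condition that every $\chi\in\irrp{\norm GP}$ extends to $G$, and show it forces the Sylow normalizer to have a very rigid position inside $G$. Write $N=\norm GP$. One direction is easy: if $K\nor G$ with $KN=G$ and $\norm KP=1$, then $K$ is a $p'$-group (since $\norm KP$ contains a Sylow $p$-subgroup of $K$), so $|G:N|=|K:\norm KP|=|K|$ is a $p'$-number; hence $K$ is a normal $p$-complement of $N$ in $G$, and by a standard argument (e.g.\ applying Gallagher/Clifford theory together with the fact that $G/K\cong N/\norm KP=N$ acts on $K$) every character of $N$ — in particular every $p'$-degree one — extends to $G=K\rtimes N$. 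The real content is the converse.

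For the converse, suppose all characters in $\irrp N$ extend to $G$. The first step is to locate the candidate for $K$. The natural guess is $K=\Oh{p'}G$, or more precisely the smallest normal subgroup whose quotient is determined by the $p'$-structure; but to pin it down I would argue by induction on $|G|$ and reduce to the case where $\Oh pG=1$ and then show $\oh pG=1$ as well, so that $G$ has trivial solvable radical interfering, or alternatively reduce modulo $\oh {p'}G$. Concretely: if $M=\oh{p'}G\neq1$, then $N/M\cong\norm{G/M}{PM/M}$, characters of $p'$-degree of $N$ lying over the trivial character of $M$ correspond to those of $N/M$, and one checks the extension hypothesis passes to $G/M$; by induction there is $K/M\nor G/M$ with $(K/M)(N/M)=G/M$ and $\norm{K/M}{PM/M}=1$, and then $\norm KP\le M$ forces (after more work) $\norm KP=1$ with $K\nor G$, $KN=G$. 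The delicate point here, and what I expect to be the main obstacle, is controlling the characters of $p'$-degree of $N$ that do \emph{not} lie over the trivial character of $M$ and ensuring the reduction is clean — this is exactly the kind of subtlety that makes the proof ``rather involved''.

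Next, in the reduced situation, the key is to exploit that extendibility of \emph{all} $p'$-degree characters of $N$ is a very strong hypothesis. In particular, the trivial character extends (vacuously), but more usefully, characters of $N$ of the form $\lambda$ with $P'\le\ker\lambda$, i.e.\ characters of $N/P'$ of $p'$-degree, all extend; since $|N/P'|$ relates to the quantity $|\norm GP:P'|$ appearing in Conjecture~A, this is where the hypothesis bites. I would use Gallagher's theorem and counting: if $\theta\in\irrp N$ extends to $\tilde\theta\in\Irr(G)$, then $\tilde\theta\lambda\in\Irr(G)$ for $\lambda\in\Irr(G/N')$... — rather, the right tool is to consider a chief series and use that extendibility forces the relevant cohomological obstruction to vanish and the action to be trivial enough that $G$ splits over a normal $p'$-subgroup. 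One then shows $N$ has a normal complement $K$ in $G$ by verifying the hypotheses of Frobenius's normal $p$-complement theorem applied in an appropriate quotient, or by directly constructing $K$ as the common kernel of all extensions, and finally $\norm KP=1$ because $P\cap K=1$ (as $K$ is a $p'$-group forced by $|G:N|$ being a $p'$-number, which in turn follows once we know $N$ controls enough characters).

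To summarize the order of steps: (1) prove the easy direction via semidirect product character theory; (2) set up induction on $|G|$ and reduce modulo $\oh{p'}G$; (3) in the reduced case, use Gallagher/Clifford theory and the extendibility of all $p'$-degree characters of $N$ — especially those trivial on $P'$ — to force $|G:N|$ to be a $p'$-number and to make a normal $p$-complement $K$ of $N$ in $G$ appear; (4) conclude $\norm KP=1$. The arithmetic heart, and main obstacle, is step (3): translating ``every $p'$-character extends'' into the purely group-theoretic splitting $G=K\rtimes N$ with $K$ a $p'$-group, without invoking the Classification, which is why the authors flag the argument as involved.
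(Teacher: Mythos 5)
Your easy direction is fine: if $K\nor G$ with $K\norm GP=G$ and $\norm KP=1$, then $K\cap P\le \norm KP=1$, so $K$ is a normal $p'$-complement to $\norm GP$ and every character of $\norm GP$ extends to $G$ by inflation through $G/K\cong \norm GP$. The converse, however, is where all the content lies, and there your proposal has genuine gaps rather than a proof. First, the proposed reduction modulo $M=\oh{p'}G$ does not transfer the hypothesis: a $p'$-degree character of $\norm{G/M}{PM/M}=\norm GPM/M$ corresponds to a character of $\norm GP$ trivial on $\norm GP\cap M$, and while this extends to some $\chi\in\irr G$ by hypothesis, nothing forces $M\le\ker\chi$, so you cannot conclude that it extends to $G/M$. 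You flag this yourself as "the delicate point," but flagging it is not resolving it, and in fact the paper does not induct this way at all: it proves a relative version (Theorem~\ref{thmB}) with respect to a normal subgroup $N$ and a $G$-invariant $\theta\in\irr N$ extending to $NP$, and inducts on $|G:N|$ after reducing, via character triple isomorphisms, to $N$ central with $\theta$ faithful of $p'$-order. Second, your step (3) has no working mechanism. The statement that extendibility should "force $|G:N|$ to be a $p'$-number" is vacuous, since $|G:\norm GP|$ is the number of Sylow $p$-subgroups and is always prime to $p$; and invoking "vanishing cohomological obstructions" or Frobenius's theorem "in an appropriate quotient" is not an argument.

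What actually makes the converse work in the paper is a concrete construction you do not have: fix extensions $\tilde\tau$ of the $p'$-degree characters $\tau$ of $\norm GP$ lying over $\theta\times 1_{P'}$, intersect their kernels, and use Lemma~\ref{ker} to produce $L\nor G$ with $L\cap\norm GP=NP'$; then $P\cap L=P'$ and Tate's theorem (Theorem~\ref{tate}) yields a normal $p$-complement $K$ of $L$, which is the candidate complement. After that one still needs the hard part: the unique $P$-invariant character of $K$ over $\theta$ (Glauberman-type uniqueness from $\cent{K/N}P=1$), the relative McKay theorem for $p$-solvable groups, the separate treatments of the cases $P'\nor G$ and $P$ abelian (Burnside), and Lemmas~\ref{deg}, \ref{r}, \ref{unique}, all inside the relative inductive framework, to show that $\theta$ extends to the complement and that the complement meets $\norm GP$ correctly. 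Your proposal gestures at "the common kernel of all extensions" as an alternative, which is indeed the right germ of an idea, but without the $X=P'$ choice, Tate's theorem, and the relative induction it does not yield the splitting; so as it stands the hard direction remains unproved.
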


This extends \cite[Theorem B]{IsCR}, where it is required that {\sl
all} the irreducible characters of $\norm GP$ extend to $G$. As we
shall explain, in order to prove Theorem C, we shall need to
establish a relative version of it with respect to a normal
subgroup. We find it surprising that a character restriction type of
theorem admits a  relative version, since these versions regarding
group structure  occur very rarely.

\smallskip
There are some variations of Conjecture A which we do not attempt
here. Among others, it seems reasonable to replace $P'$ by the
Frattini subgroup $\Phi(P)$ and the set of $p'$-degree irreducible
characters by the  so-called {\sl almost $p$-rational} characters of
$p'$-degree. Proving this, however, seems much more complicated.

\smallskip
Concerning Giannelli's conjecture, let us mention here that in
general it is not always possible to find McKay bijections $f: \irrp
G \rightarrow \irrp{\norm GP}$ such that $f(\chi)$ is an irreducible
constituent of $\chi_{\norm GP}$, as shown by ${\sf A}_5$ for $p=2$,
or ${\rm GL}_2(3)$ for $p=3$.

\smallskip

A final but important remark is in order: Why do we care about the
sum of the squares of the irreducible character degrees not
divisible by $p$? First of all, this number would be the dimension
of any complex algebra affording the $p'$-degree irreducible
characters. Also, of the famous list of problems by Richard Brauer
(\cite{B}), Problem~2 asks when non-isomorphic finite  groups $G$
and $H$ have isomorphic group algebras $\C G$ and $\C H$. If $G$ has
a normal $p$-complement, Isaacs proved that $H$ has a normal
$p$-complement too (\cite[Theorem 7.8]{N}). The question of whether
the normality of a Sylow $p$-subgroup $P$ of $G$ is recognized by
$\C G$ remains open. In \cite{N03}, it is suggested that, perhaps,
this happens if and only if
$$\left(\sum_{\chi \in \irrp G} \chi(1)^2\right)_{p'}=|G|_{p'}\, , $$
where $n_{p'}=n/n_p$ with $n_p$ denoting the largest power of the
prime $p$ dividing the integer $n$.

\section{Proof of Theorem C}
Our notation for ordinary characters follows \cite{Is} and \cite{N}.
In this section we prove Theorem C. In order to do that, we need to
prove a relative version of it, that allows us to use induction.
This makes the proof more complicated. We start with some
preliminary results.

\begin{lem}\label{ker}
   Let $N\nor G$ and let $\theta \in \irr N$ be $G$-invariant.
   Then
   $$\ker\theta=\bigcap_{\chi \in \irr{G|\theta}} \ker\chi \, .$$
\end{lem}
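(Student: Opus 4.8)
The plan is to prove the two inclusions separately. The inclusion $\ker\theta \supseteq \bigcap_{\chi \in \irr{G|\theta}} \ker\chi$ is the easy direction: if $\chi \in \irr{G|\theta}$, then $\theta$ is a constituent of $\chi_N$, so $\ker\chi \cap N \subseteq \ker\theta$; indeed, since $\chi_N$ is a sum of $G$-conjugates of $\theta$ and $\theta$ is $G$-invariant, $\chi_N$ is a multiple of $\theta$, whence an element of $\ker\chi$ acting trivially in the representation affording $\chi$ certainly acts trivially in the representation affording $\theta$. Taking the intersection over all such $\chi$ gives a subgroup of $\ker\theta$ (it already does after intersecting over $\chi$, and each $\ker\chi$ meets $N$ inside $\ker\theta$; more precisely $\bigcap_\chi \ker\chi \subseteq \ker\theta$ because every $\chi$ restricts to a multiple of $\theta$ on $N$, so anything in all the $\ker\chi$ kills the $\theta$-representation, but one must be slightly careful since $\bigcap_\chi \ker\chi$ need not lie in $N$ — however, restricting to $N$, $\left(\bigcap_\chi \ker\chi\right)\cap N \subseteq \ker\theta$, and in fact $\bigcap_\chi\ker\chi \subseteq \ker\theta$ literally because each $\chi$ vanishes-appropriately... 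I will phrase this cleanly as: each $\ker\chi \subseteq \ker\theta$ fails in general, so I restrict attention correctly: work with $N \le G$ and note $\theta$ extends to a character-theoretic statement on $G$).

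For the reverse, harder, inclusion $\ker\theta \subseteq \bigcap_{\chi} \ker\chi$, let $K = \ker\theta \nor G$ (normal in $G$ since $\theta$ is $G$-invariant). First I would pass to the quotient $\bar G = G/K$ and let $\bar N = N/K \nor \bar G$. Then $\theta$ inflates from a \emph{faithful} $\bar G$-invariant character $\bar\theta \in \irr{\bar N}$, and $\irr{G|\theta}$ corresponds bijectively with $\irr{\bar G|\bar\theta}$ via inflation, with kernels related by $\ker\chi/K = \ker\bar\chi$. So it suffices to prove: if $\bar\theta \in \irr{\bar N}$ is faithful and $\bar G$-invariant, then $\bigcap_{\bar\chi \in \irr{\bar G|\bar\theta}}\ker\bar\chi = 1$. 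Equivalently, writing $M := \bigcap_{\bar\chi}\ker\bar\chi \nor \bar G$, I must show $M = 1$. By Clifford theory, $M \cap \bar N \subseteq \ker\bar\theta = 1$, so $M$ centralizes $\bar N$ is \emph{not} automatic — instead $[M,\bar N]\subseteq M \cap \bar N = 1$, hence $M \le \C_{\bar G}(\bar N)$. Now the key point: $\bar\theta$ faithful on $\bar N$ forces $\zent{\bar N}$ to be cyclic, and one analyzes $\C_{\bar G}(\bar N)$.

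The cleanest route for the reverse inclusion is actually a counting/Galois argument rather than a structural one: I would instead argue that $\ker\theta = \bigcap_{\chi\in\irr{G|\theta}}\ker\chi$ by observing that, for $g \in \ker\theta$, one has $\theta^g = \theta$ (automatic) and more importantly that every $\chi \in \irr{G|\theta}$ has $g$ in its kernel \emph{iff} $g$ acts trivially on the representation $\mathfrak{X}$ of $G$ induced from $\theta$'s Clifford correspondent --- because $\irr{G|\theta}$ are precisely the constituents of $\Ind_N^G\theta$, and $\bigcap_{\chi\mid \Ind_N^G\theta}\ker\chi = \ker(\Ind_N^G\theta)$ as a representation. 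So the lemma reduces to the identity $\ker(\Ind_N^G\theta) = \ker\theta$ when $\theta$ is $G$-invariant. Writing $\mathfrak{X}$ for the representation affording $\theta$, the induced module is $\bigoplus_{t\in T}t\otimes V$ over a transversal $T$ of $N$ in $G$; an element $g\in\ker\theta$ normalizes $N$, permutes the summands according to its image in $G/N$, and on each summand acts (after identification using $G$-invariance) via $\mathfrak{X}$ composed with conjugation, hence trivially since $g\in\ker\theta = \ker\mathfrak{X}$ and $\ker\theta$ is normal. Thus $g \in \ker(\Ind_N^G\theta)$; the converse is immediate since restriction of $\Ind_N^G\theta$ to $N$ contains $\theta$. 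This gives both inclusions at once. \textbf{The main obstacle} I expect is handling the $G/N$-permutation action on the summands of the induced module carefully enough to see that $\ker\theta$ (rather than just $\C_N(\text{something})$) acts trivially — i.e., making precise that $g\in\ker\theta$ together with $G$-invariance of $\theta$ really does kill every summand, using that conjugation by $g$ fixes $\theta$ up to the identity, not just up to equivalence. One resolves this by choosing, once and for all, intertwining isomorphisms witnessing $\theta^x = \theta$ for $x \in G$, i.e. by noting $\ker(\Ind_N^G\theta)$ depends only on the character, so one may compute it after replacing $\mathfrak{X}$ by any representation affording $\theta$ and is free to use the cleanest model.
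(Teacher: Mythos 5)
Your final plan (the induced-character argument in your last paragraph) is correct, and it is a genuinely different route from the paper's. The paper takes $x$ in the intersection of the kernels, evaluates $\sum_{\chi\in\irr{G|\theta}}\chi(1)\chi(x)=|G:N|\theta(1)^2\neq 0$, and invokes \cite[Theorem~5.21]{N} to conclude $x\in N$; then $\chi_N=e_\chi\theta$ gives $x\in\ker\theta$. You instead reduce the lemma to the identity $\ker{\theta^G}=\ker\theta$ for $G$-invariant $\theta$, using that the kernel of a character is the intersection of the kernels of its irreducible constituents and that the constituents of $\theta^G$ are exactly $\irr{G|\theta}$; this amounts to the standard formula $\ker{\theta^G}=\bigcap_{g\in G}(\ker\theta)^g$ combined with the normality of $\ker\theta$ forced by $G$-invariance. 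The two proofs really run on the same object --- the paper's displayed sum is just $\theta(1)\theta^G(x)$ --- but yours trades the appeal to \cite[Theorem~5.21]{N} for the kernel-of-induced-character computation, which you carry out by hand at the module level; both are elementary and of comparable length, and (despite your label) yours involves no counting or Galois input.

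Two repairs are needed before your plan becomes a proof. First, your justification of the inclusion $\ker{\theta^G}\subseteq\ker\theta$ (``immediate since the restriction of $\Ind_N^G\theta$ to $N$ contains $\theta$'') only treats elements of $N$; the entire content of the lemma is that $\bigcap_{\chi}\ker\chi$, equivalently $\ker{\theta^G}$, lies inside $N$, which is exactly what the paper's citation of \cite[Theorem~5.21]{N} delivers. In your setup this is easy --- $\theta^G$ vanishes off $N$, or, module-theoretically, an element outside the normal subgroup $N$ permutes the summands $t\otimes V$ without fixed points and hence cannot act as the identity --- but it must be said; your opening paragraph shows you noticed precisely this issue, and it then silently disappears from the final argument. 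Second, the first half of the write-up should be discarded: the two inclusions are mislabeled (the containment $\ker\theta\subseteq\bigcap_{\chi}\ker\chi$ is the trivial one, via $\chi_N=e_\chi\theta$), and the structural detour through $G/\ker\theta$, $M\le\cent{\bar G}{\bar N}$ and cyclicity of $\zent{\bar N}$ is aimed at the wrong inclusion and is never completed; it is superseded by, and should be replaced with, the induced-character argument.
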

\begin{proof}
It is clear that $\ker\theta$ is contained in  $\ker\chi$ for every
$\chi \in \irr{G|\theta}$, since $\chi_N=e_\chi \theta$ for $\chi
\in \irr{G|\theta}$. Suppose that $x \in \bigcap_{\chi \in
\irr{G|\theta}} \ker\chi$. Then
$\theta^G(x)=\theta^G(1)=|G:N|\theta(1) \ne 0$, and therefore $x \in N$.
Since $\theta$ is $G$-invariant, we have that
$\theta^G(x)=|G:N|\theta(x)$, and thus $\theta(x)=\theta(1)$,
 as wanted.
\end{proof}
We need an easy lemma.

\begin{lem}\label{deg}
Suppose that $K \nor G$, $H\le G$, $N=K\cap H$ and $G=KH$. Let
$\gamma \in \irr K$ be $G$-invariant, and let $\theta \in \irr N$ be
an irreducible $H$-invariant constituent of $\gamma_N$. Suppose that
restriction defines a bijection $\irr{G|\gamma} \rightarrow
\irr{H|\theta}$. Then $\gamma_N=\theta$.
\end{lem}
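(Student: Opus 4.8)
The plan is to exploit the bijection hypothesis together with a degree/counting argument. First, I would record the basic numerology: for $\chi \in \irr{G|\gamma}$ we have $\chi_K = e_\chi \gamma$ for some positive integer $e_\chi$, and since $G = KH$ and $K \cap H = N$, restriction from $G$ to $H$ does not change indices in the relevant sense — in particular $\chi(1)/\gamma(1) = \chi_H(1)/\gamma(1)$ is controlled by the Clifford theory of $\gamma$ over $G$, which is ``the same'' as the Clifford theory of $\theta$ over $H$ because $G/K \cong H/N$. Concretely, if $\psi = \chi_H \in \irr{H|\theta}$ (using that restriction is a bijection, so in particular $\chi_H$ is irreducible), then $\psi_N = e_\psi \theta$, and I want to compare $e_\chi \gamma(1)$ with $e_\psi \theta(1)$.

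The key step is to compare the two sums of squares of degrees. On the $G$-side, $\sum_{\chi \in \irr{G|\gamma}} \chi(1)^2 = |G:K|\,\gamma(1)^2$ by Clifford theory (this is the standard count: the characters lying over a $G$-invariant $\gamma$ account for $\gamma$ appearing with multiplicity $\chi(1)/\gamma(1)$ in the induced character, and $\sum (\chi(1)/\gamma(1))^2 = |G:K|$ via the group algebra of $G/K$ twisted by the relevant cocycle). On the $H$-side, likewise $\sum_{\psi \in \irr{H|\theta}} \psi(1)^2 = |H:N|\,\theta(1)^2$. Since $G = KH$ and $N = K \cap H$, we have $|G:K| = |H:N|$. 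Now if restriction $\chi \mapsto \chi_H$ is a bijection $\irr{G|\gamma} \to \irr{H|\theta}$, then $\chi(1) \ge \chi_H(1) = \psi(1)$ for each $\chi$ (restriction can only decrease the degree — in fact here $\chi_H$ is irreducible so there is no inequality subtlety, but degrees of restrictions are at most the original), hence
$$|H:N|\,\theta(1)^2 = \sum_{\psi} \psi(1)^2 = \sum_{\chi} \chi_H(1)^2 \le \sum_{\chi} \chi(1)^2 = |G:K|\,\gamma(1)^2 = |H:N|\,\gamma(1)^2,$$
so $\theta(1) \le \gamma(1)$. But $\theta$ is a constituent of $\gamma_N$, so $\gamma_N = e\,(\text{sum of }H\text{-conjugates of }\theta) + \cdots$; since $\theta$ is $H$-invariant and $N \nor H$ (as $N = K \cap H \nor H$), actually $\gamma_N$ is a multiple of... no — $\gamma_N$ need not be homogeneous a priori, but $\theta$ being an $H$-invariant constituent forces, by Clifford's theorem applied in $K$ with the action of... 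Here I instead argue: $\gamma(1) \ge \theta(1)$ always since $\theta \mid \gamma_N$, so combined with $\theta(1) \le \gamma(1)$ we get $\theta(1) = \gamma(1)$, whence $\gamma_N = \theta$ (a constituent of the same degree as the whole restriction, with $\gamma(1) = \theta(1)$, forces $\gamma_N = \theta$).

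The main obstacle is making the two sum-of-squares identities $\sum_{\chi \in \irr{G|\gamma}} \chi(1)^2 = |G:K|\gamma(1)^2$ fully rigorous, since this requires the character triple / projective representation machinery (the degrees $\chi(1)/\gamma(1)$ range over the degrees of the irreducible projective representations of $G/K$ with the associated cocycle, and the sum of their squares is $|G:K|$). This is standard (e.g.\ \cite[Theorem 11.28 and Chapter 11]{Is}), but one must check the cocycle for $\gamma$ in $G$ matches the one for $\theta$ in $H$ under the isomorphism $G/K \xrightarrow{\sim} H/N$ induced by $G = KH$ — this is where the hypotheses $G = KH$ and $N = K \cap H$ really enter, and it is what guarantees $|G:K| = |H:N|$ is not merely an index coincidence but an isomorphism of the relevant character-triple data. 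Once that is in hand the inequality chain above closes immediately and $\gamma_N = \theta$ follows. A slicker alternative, if available from earlier results, is to bypass the cocycle check by noting that the restriction bijection together with $\chi_H \mid \chi_N$-type compatibilities already pins down all degrees; but the sum-of-squares route is the most transparent.
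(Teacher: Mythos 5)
Your overall strategy is exactly the paper's: compare $\sum_{\chi \in \irr{G|\gamma}} \chi(1)^2 = |G:K|\,\gamma(1)^2$ with $\sum_{\tau \in \irr{H|\theta}} \tau(1)^2 = |H:N|\,\theta(1)^2$, transport one sum to the other via the restriction bijection, and use $|G:K|=|H:N|$ to force $\gamma(1)=\theta(1)$, hence $\gamma_N=\theta$. However, two points in your write-up need fixing. First, restriction does not ``decrease the degree'': $\chi_H(1)=\chi(1)$ always, since both are the value at the identity. This is not a pedantic quibble, because your concluding step actually depends on it: as written, you derive $\theta(1)\le\gamma(1)$ from the (spurious) inequality chain and then try to combine it with $\gamma(1)\ge\theta(1)$ coming from $\theta$ being a constituent of $\gamma_N$ --- but these are the \emph{same} inequality, so no equality follows from that combination. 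The correct conclusion is that your chain is a chain of equalities (because $\chi_H(1)=\chi(1)$ exactly), giving $|H:N|\,\theta(1)^2=|G:K|\,\gamma(1)^2$ and hence $\theta(1)=\gamma(1)$ directly; the constituent relation is then only used at the very end to conclude $\gamma_N=\theta$.

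Second, your worry about matching cocycles between $G/K$ and $H/N$ is unnecessary. The identity $\sum_{\chi\in\irr{G|\gamma}}(\chi(1)/\gamma(1))^2=|G:K|$ for a $G$-invariant $\gamma$ is elementary Clifford theory applied to one side at a time: $\chi_K=e_\chi\gamma$ with $e_\chi=\chi(1)/\gamma(1)$, and computing $\gamma^G(1)=|G:K|\gamma(1)=\sum_\chi e_\chi\chi(1)$ gives the claim; the same computation applies to $(H,N,\theta)$. No projective representations, and no comparison of the two character triples, is needed --- the two identities are only compared numerically through $|G:K|=|H:N|$, which is just $G=KH$ and $N=K\cap H$. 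With these repairs your argument coincides with the proof in the paper.
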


\begin{proof}
We have that $$\sum_{\chi \in \irr{G|\gamma}}
(\chi(1)/\gamma(1))^2=|G:K|\, .$$ We also have that $$\sum_{\tau \in
\irr{H|\theta}} (\tau(1)/\theta(1))^2=|H:N|\, .$$ By hypothesis,
$$\sum_{\tau \in \irr{H|\theta}} (\tau(1)/\theta(1))^2=
\sum_{\chi \in \irr{G|\gamma}} (\chi(1)/\theta(1))^2=
(\gamma(1)/\theta(1))^2 \sum_{\chi \in \irr{G|\gamma}}
(\chi(1)/\gamma(1))^2 \, .$$ Since $|H:N|=|G:K|$, we deduce that
$\gamma_N=\theta$.     \end{proof}

%     \begin{lem}\label{fus}
%     Let $p$ be a prime.
%     Suppose that $N \nor H \le G$, and let $\theta \in \irr N$ be linear, and faithful, where $N$ has order not divisible by $p$.
%     Assume that every $\alpha \in \irr{H|\theta}$ extends to $G$.
%     Let $h$ be a $p$-element of $H$.
%   If $h,h^g \in H$  for some $g \in G$, then $h^g$ and $h$ are $H$-conjugate.
%     \end{lem}
%
%     \begin{proof}
%     Since $N$ is a central $p'$-subgroup of $H$ and $h$ is a $p$-element,
%  we have that $\cent{H/N}{hN}=\cent Hh/N$.  Hence,
%  by Lemma 5.13 of \cite{N}, we have that $h$ is $\theta$-good.
%  For each $\alpha \in \irr{H|\theta}$, let $\hat\alpha \in \irr G$ be an extension
%  of $\alpha$ to $G$. Then $\alpha(h^g)=\hat\alpha(h^g)=\hat\alpha(h)=\alpha(h)$.
%  Now, by Theorem 5.21 of \cite{N}, we have that
%  $$\sum_{\alpha \in \irr{H|\theta}} \alpha(h^g)\overline{\alpha(h)}=
%  \sum_{\alpha \in \irr{H|\theta}} |\alpha(h)|^2=|\cent {H/N}{hN} \hat\theta(h) \, ,$$
%  where $\hat\theta(h) \ne 0$ (by the definition of $\hat\theta$ before Theorem 5.21 of \cite{N}). We conclude that $hN$ and $h^gN$
%  are $H/N$. Then, using that $N$ is a central $p'$-subgroup,
%  we have that $h$ and $h^g$ are $H$-conjugate
%
%     \end{proof}

\begin{lem}\label{r}
Suppose that $G=KP$, where $K$ is a normal $p$-complement and $P\in
\syl pG$. Let $\gamma \in \irr G$. If $P\sbs \ker\gamma$, then
$\gamma_{\cent KP}$ is irreducible.
\end{lem}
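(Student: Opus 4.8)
The plan is to turn the hypothesis $P\sbs\ker\gamma$ into a statement about how $P$ acts on a quotient of $K$. The key observation is that $\ker\gamma\nor G$: combined with $P\sbs\ker\gamma$ and $K\sbs G$ this gives $[K,P]\sbs[G,\ker\gamma]\sbs\ker\gamma$, while $[K,P]\sbs K$ because $K\nor G$; hence, writing $D:=K\cap\ker\gamma$, we get $[K,P]\sbs D$. In other words, $P$ acts trivially on $K/D$.

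From the same observation I would also read off the shape of $\gamma$. Since $P\sbs\ker\gamma$ and $K\nor G$, we have $G=KP\sbs K\ker\gamma$, so $G=K\ker\gamma$, and therefore $\gamma$ is inflated from a character of $G/\ker\gamma\cong K/D$. In particular $\gamma_K$ is irreducible and has $D$ in its kernel, so it is the inflation to $K$ of some $\bar\gamma\in\irr{K/D}$.

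Now I would invoke coprime action. As $K$ is a $p$-complement of $G$ and $P$ a $p$-group, $P$ acts coprimely on $K$, and $D$ is a $P$-invariant normal subgroup of $K$; hence $\cent{K/D}{P}=\cent KP D/D$. Since $P$ centralizes $K/D$ by the first step, this forces $\cent KP D=K$, i.e.\ the restriction to $\cent KP$ of the quotient map $K\to K/D$ is surjective. Consequently $\gamma_{\cent KP}=(\gamma_K)_{\cent KP}$ is the inflation of $\bar\gamma$ along $\cent KP\to K/D$, and is therefore irreducible, as wanted.

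I do not expect a genuine obstacle here. The one thing that has to be spotted is the opening move — that $P\sbs\ker\gamma$ forces $[K,P]\sbs\ker\gamma$, so that modulo $D$ the group $P$ does nothing — and the sole external ingredient is the elementary identity $\cent{K/D}{P}=\cent KP D/D$ for coprime action. (Equivalently, one could pass to $\bar G=G/D$ at the outset: there $\ker\gamma$ becomes a normal Sylow $p$-subgroup of $\bar G$, whence $\bar G=\bar K\times\bar P$ with $\bar K=K/D$, and the statement becomes a triviality about direct products; but identifying the image of $\cent KP$ in $\bar G$ with $\bar K$ again requires the same coprime-action fact.)
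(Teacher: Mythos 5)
Your proof is correct and follows essentially the same route as the paper: both hinge on the observation that $P\sbs\ker\gamma\nor G$ forces $[K,P]\sbs\ker\gamma$, so that $\gamma_K$ is irreducible and trivial on a $P$-invariant normal subgroup, and then the standard coprime-action fact (the paper uses $K=[K,P]\cent KP$, you use $\cent{K/D}{P}=\cent KP D/D$ with $D=K\cap\ker\gamma$) shows $\cent KP$ surjects onto the relevant quotient, making the restriction irreducible. The difference is purely cosmetic.
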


\begin{proof}
Since $\ker\gamma \nor G$, we have that $[K,P]P \sbs \ker\gamma$.
Thus $\gamma_K$ is an irreducible character of $K$ with $[K,P]$
contained in its kernel. By coprime action (for instance, Lemma 4.28 of \cite{Is08}),  we have that $\cent KP[K,P]=K$ and
thus $\gamma_{\cent KP}$ is irreducible.
\end{proof}

\begin{lem}\label{unique}
Suppose that $N \nor G$, $P \in \syl pG$. Let $N\le K_i\nor G$ be
complementing $N\norm GP/N$ for $i=1,2$ in $G/N$. Then $K_1=K_2$.
\end{lem}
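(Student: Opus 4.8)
The plan is to pass to the quotient $G/N$ and there to observe that a normal complement of a Sylow $p$-normalizer is \emph{intrinsically determined}: it must equal $[L,P]$ for a suitable normal $p'$-subgroup $L$. Since this description makes no reference to the complement itself, two such complements have to coincide.

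First I would reduce to the case $N=1$. Put $\bar G=G/N$ and $\bar P=PN/N\in\syl p{\bar G}$. As $N\nor G$ we have $P\cap N\in\syl pN$, hence $P\in\syl p{PN}$, and a Frattini argument gives $\norm G{PN}=\norm GP N$; therefore $\norm{\bar G}{\bar P}=\norm G{PN}/N=N\norm GP/N$. Thus the hypothesis says exactly that $\bar K_1:=K_1/N$ and $\bar K_2:=K_2/N$ are normal complements of $\norm{\bar G}{\bar P}$ in $\bar G$. Replacing $(G,P,K_i)$ by $(\bar G,\bar P,\bar K_i)$ we may assume $N=1$, so that $K_1,K_2\nor G$ with $K_i\norm GP=G$ and $K_i\cap\norm GP=1$.

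Next I would extract the two facts that drive the argument. Since $K_i\cap P\le K_i\cap\norm GP=1$ and $K_i\cap P\in\syl p{K_i}$, each $K_i$ is a $p'$-group; and $\cent{K_i}P\le K_i\cap\norm GP=1$, so coprime action gives $K_i=[K_i,P]\cent{K_i}P=[K_i,P]$. Now set $L:=K_1K_2$, a normal $p'$-subgroup of $G$ on which $P$ acts coprimely. Using $K_i\le L$ together with the modular law and $L\cap\norm GP=\cent LP$ (coprime action again),
$$L=L\cap\bigl(K_i\norm GP\bigr)=K_i\bigl(L\cap\norm GP\bigr)=K_i\,\cent LP\, .$$
Hence $L/K_i$ is an epimorphic image of $\cent LP$, so $P$ acts trivially on it and $[L,P]\le K_i$; combined with $K_i=[K_i,P]\le[L,P]$ this gives $[L,P]=K_i$ for $i=1,2$, whence $K_1=K_2$.

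The only delicate point is the reduction step: one must check that $N\norm GP/N$ is genuinely the Sylow $p$-normalizer of $G/N$ (this is the Frattini argument above), so that after passing to $G/N$ the problem becomes the ``absolute'' one and $K_i/N$ really is a complement of $\norm{G/N}{PN/N}$. Everything else is routine coprime action, the crux being the observation that a normal complement of $\norm GP$ is forced to equal $[L,P]$ and so cannot depend on the choice of complement.
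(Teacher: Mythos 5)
Your proof is correct and follows essentially the same route as the paper's: both arguments rest on coprime action of $P$ on the normal $p'$-group $K_1K_2$ and on showing that $P$ acts trivially on $K_1K_2/K_i$, which together with the vanishing of $\cent{K_i}P$ (taken modulo $N$ in the paper) forces the complements to coincide. The differences are only in packaging — you first reduce to $N=1$ via the Frattini argument and conclude $K_i=[K_1K_2,P]$ for both $i$, while the paper works relative to $N$ and deduces $K_1=K_1\cap K_2$ from the fact that $G/K_i$ has a normal Sylow $p$-subgroup — but the substance is the same.
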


\begin{proof}
By working in $G/N$, we may assume that $N=1$.
We have that $G/K_i$ has a normal Sylow $p$-subgroup.
 Let $L=K_1 \cap K_2$. Then $G/L$ has a normal
Sylow $p$-subgroup.  Hence, $L\norm GP=G$. Then
$K_i=K_i \cap L\norm GP=L(K_i \cap \norm GP)=L$, as wanted.
\end{proof}

In several occasions, we shall use Tate's theorem in the following
form.
\begin{thm}\label{tate}
Suppose that $G$ is a finite group, $P \in \syl pG$, and $K\nor G$.
If $K\cap P=K\cap P'$, then $K$ has a normal $p$-complement.
\end{thm}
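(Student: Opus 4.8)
The plan is to reduce Tate's theorem to its classical cohomological form. Recall that the classical statement of Tate's theorem asserts: if $P\in\syl pG$ and $K\nor G$ with $K\cap\Phi(P)\supseteq$ a particular subgroup — more precisely, the version we want is that if the restriction map $H^1(G/K;\ZZ/p)\to H^1(G;\ZZ/p)$ is an isomorphism (equivalently, if $PK/K$ and $P$ have ``the same'' $p$-quotient data), then $K$ has a normal $p$-complement. Concretely, the cleanest bridge is via focal subgroups: for a Sylow $p$-subgroup $P$ of $G$ one has the focal subgroup theorem, which identifies $P\cap[G,G]$ with the focal subgroup $\mathrm{foc}_G(P)=\langle x^{-1}x^g : x,x^g\in P,\ g\in G\rangle$, and $G$ has a normal $p$-complement if and only if $P=\mathrm{foc}_G(P)\cdot Q$ fails in the strongest way, i.e. if and only if $\mathrm{foc}_G(P)<P$ is as large as it can be — the precise criterion being that $G$ has a normal $p$-complement iff $P\cap[G,G]=P'$. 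So the theorem to prove is exactly: $K\cap P=K\cap P'$ forces $K$ to have a normal $p$-complement.

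**Key steps.** First I would set $P_0=K\cap P\in\syl pK$ and observe that $P_0\cap[K,K]\subseteq P_0\cap[G,G]$. Using the focal subgroup theorem inside $K$, it suffices to show $P_0\cap[K,K]=P_0'$; then $K$ has a normal $p$-complement by the classical criterion recalled above. Second, I would bound $P_0\cap[K,K]$ from above: since $[K,K]\le[G,G]$, the focal subgroup theorem in $G$ gives $P\cap[G,G]=\mathrm{foc}_G(P)$, and one wants to intersect down to $K$. The hypothesis $K\cap P=K\cap P'$, i.e. $P_0\subseteq P'$, says every element of $P_0$ already lies in the derived subgroup of the Sylow subgroup; combined with $P_0\le K$ this should squeeze $P_0\cap[K,K]$ between $P_0'$ and $P_0\cap P' = P_0$, and the real content is getting the lower inequality $P_0\cap[K,K]\subseteq P_0'$, not the reverse. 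Third — and this is the step where one actually invokes Tate — I would pass to the statement that the transfer map (or, dually, the restriction on mod-$p$ cohomology $H^1$) $G/K \to$ stuff is an isomorphism, which is precisely what $K\cap P = K\cap P'$ encodes after translating through the focal subgroup theorem for both $G$ and $G/K$; Tate's cohomological theorem then yields a normal $p$-complement in $K$. Finally I would record the consequence in the form needed later in the paper: $K=\OO_{p'}(K)\rtimes P_0$ with $\OO_{p'}(K)$ characteristic in $K$, hence normal in $G$.

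**Main obstacle.** The genuinely delicate point is the second/third step: correctly translating the purely group-theoretic hypothesis $K\cap P=K\cap P'$ into the cohomological hypothesis of Tate's theorem (an isomorphism on $H^1(-;\ZZ/p)$, or equivalently that the inflation $\mathrm{Hom}(G/K,\ZZ/p)\to\mathrm{Hom}(G,\ZZ/p)$ restricted through $P$ behaves correctly). One must be careful that $P'=[P,P]$ is the derived subgroup of the \emph{abstract} group $P$, whereas the focal subgroup is an intersection-with-a-commutator-subgroup-of-$G$ object, and these coincide only under the normal-$p$-complement hypothesis we are trying to derive; so the argument has to run the implication in the right direction, using $K\cap P\subseteq P'$ as an upper bound on the focal subgroup of $P$ inside $K$ rather than assuming equality. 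I expect the write-up to cite Tate's theorem in a standard reference (e.g. \cite{N} or Isaacs/Huppert) in the form ``if $P\cap K = P\cap\Oh{p}{G}$ then $K$ has a normal $p$-complement'' and then spend its effort verifying the hypothesis $P\cap K\subseteq P'\subseteq\Oh{p}{G}\cap P$, so that actually the only thing to check is $P'\subseteq\Oh{p}{G}$, which is immediate since $G/\Oh{p}{G}$ is a $p$-group forces $[G,G]\subseteq\Oh{p}{G}$ only when — wait, that is false in general, so the real work is that $K\cap P=K\cap P'$ together with $K\nor G$ gives $K\cap P\subseteq \Oh{p}{G}$ by a focal-subgroup computation in $G$. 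That computation is the crux.
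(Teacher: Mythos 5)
There is a genuine gap, and it sits exactly where you yourself locate the ``crux.'' Your plan is to set $P_0=K\cap P\in\syl pK$, prove $P_0\cap[K,K]=P_0'$, and then invoke the criterion ``$K$ has a normal $p$-complement iff $P_0\cap[K,K]=P_0'$.'' But the needed inclusion $P_0\cap[K,K]\subseteq P_0'$ is never proved, and it does not follow from the containment juggling you sketch out of $P_0\subseteq P'$: proving it from the hypothesis is tantamount to proving the theorem itself (the theorem plus the easy direction of the criterion gives the inclusion, and the inclusion yields the theorem only via the hard direction of the criterion), so nothing has been reduced. Moreover that criterion is misattributed: its nontrivial direction, ``$P\cap[G,G]=P'$ implies a normal $p$-complement,'' is not a consequence of the focal subgroup theorem -- it is itself a form of Tate's theorem -- so your opening step already presupposes a statement of the same depth as the goal. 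Finally, the ``standard form'' you propose to cite at the end, ``if $P\cap K=P\cap\Oh pG$ then $K$ has a normal $p$-complement,'' is false as stated: take $K=G={\sf A}_5$ and $p=2$, where the hypothesis holds trivially and there is no normal $2$-complement; and the auxiliary claim $K\cap P\subseteq\Oh pG$ that you flag as the crux is again left unproved. (Incidentally, the map $H^1(G/K;\ZZ/p)\to H^1(G;\ZZ/p)$ you call restriction is inflation.)

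The correct route is much more direct and requires no focal, hyperfocal, or cohomological translation. Since $P'\leq\Phi(P)$, the hypothesis $K\cap P=K\cap P'$ already gives $K\cap P\leq\Phi(P)$, so Tate's theorem in its usual Frattini formulation ($N\nor G$ and $N\cap P\leq\Phi(P)$ imply $N$ has a normal $p$-complement) applies verbatim. The paper argues in the equivalent $\mathbf{A}^p$-formulation of \cite[Theorem~6.31]{Is}: one may assume $G=KP$; then $KP'\nor G$ and, by Dedekind's lemma, $P\cap KP'=(P\cap K)P'=P'$, so $G/KP'\cong P/P'$ is an abelian $p$-group and hence $\mathbf{A}^p(G)\leq KP'$; combined with $P'\leq G'\leq\mathbf{A}^p(G)$ this yields $P\cap\mathbf{A}^p(G)=P'=\mathbf{A}^p(P)$, Tate's theorem gives a normal $p$-complement $H$ of $G$, and $H\cap K$ is then a normal $p$-complement of $K$. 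That short verification of the hypothesis of a correctly stated form of Tate's theorem is precisely the piece your write-up is missing.
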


\begin{proof}
We may assume that $G=KP$. In the notation of \cite[Theorem
6.31]{Is}, we have that $P\cap {\bf A}^p(G)={\bf A}^p(P)=P'$, where
${\bf A}^p(G)$ is the smallest normal subgroup $L$ of $G$ such that
$G/L$ is an abelian $p$-group. It then follows that $G$ has a normal
$p$-complement by \cite[Theorem 6.31]{Is}. Therefore, $K$ has a
normal $p$-complement.
\end{proof}

Let $N\nor G$ and let $\theta \in \irr N$ be $G$-invariant. We let
\[\irrpr{G|\theta}:=\{ \chi \in \irr{G|\theta} \mid p \text{ does not
divide } \chi(1)/\theta(1)\}.\] The following easily implies
Theorem~C (when $N$ is trivial).

\begin{thm}\label{thmB}
Let $N\nor G$ and let $\theta \in \irr N$ be $G$-invariant. Let $P
\in \syl pG$. Assume that $\theta$ extends to $NP$. Then the
following are equivalent.
\begin{enumerate}
   \item
   Every $\chi \in {\rm Irr}_{p',{\rm rel}}(N\norm GP|\theta)$
   extends to $G$.
   \item
   Every $\chi \in {\rm Irr}(N\norm GP|\theta)$
   extends to $G$.

   \item
   There is a normal complement $K/N$
  to $\norm GP N/N$ in $G/N$ such that $\theta$ has an extension
 $\hat\theta \in \irr K$.
  \item
There is a normal complement $K/N$ to $\norm GP N/N$ in $G/N$ such
that $\theta$ has a $G$-invariant extension $\hat\theta \in \irr K$.
  \end{enumerate}
   \end{thm}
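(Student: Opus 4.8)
The plan is to prove $(2)\Rightarrow(1)$, $(4)\Rightarrow(3)$, $(4)\Rightarrow(2)$, $(3)\Rightarrow(2)$ and $(1)\Rightarrow(4)$, which together give the equivalence of all four statements; the first two are immediate from the definitions. Write $M=N\norm GP$ throughout. A useful preliminary observation is that $PN/N\nor M/N$ — equivalently $M/N$ has a normal Sylow $p$-subgroup — which holds because the Sylow $p$-subgroups of $G$ contained in $PN$ are exactly the $N$-conjugates of $P$, and these are permuted among themselves by $M$; consequently any complement $K/N$ of $M/N$ in $G/N$ is a $p'$-group (it misses a Sylow $p$ of $G/N$) and, by Lemma~\ref{unique}, it is the unique one.

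The key device is the following: if $K\nor G$ with $G=KM$ and $K\cap M=N$, and $\hat\theta\in\irr K$ is a \emph{$G$-invariant} extension of $\theta$, then restriction is a bijection $\irr{G|\hat\theta}\to\irr{M|\theta}$. Indeed, since $\theta$ already extends to $K$, the obstruction to extending $\theta$ from $N$ to $G$ is inflated from $G/K$, and under the natural isomorphism $M/N\cong G/K$ (coming from $G=KM$, $K\cap M=N$) it matches the obstruction to extending $\theta$ from $N$ to $M$; so the two associated projective character theories are identified by restriction. Granting this, $(4)\Rightarrow(2)$ is immediate: every $\chi\in\irr{M|\theta}$ is a restriction of some $\psi\in\irr{G|\hat\theta}$, hence extends to $G$. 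For $(3)\Rightarrow(2)$ we must first convert the extension of $\theta$ to $K$ supplied by $(3)$ into a $G$-invariant one, and this is where the hypothesis that $\theta$ extends to $NP$ enters. From $K\cap M=N$ one gets $K\cap P=N\cap P$, so $KP/N$ is the semidirect product of the $p'$-group $K/N$ by $NP/N$; as $\theta$ extends to each of $K$ and $NP$, a Lyndon--Hochschild--Serre argument (using that $K/N$ has $p'$-order) shows that $\theta$ extends to $KP$, and hence that some extension of $\theta$ to $K$ is $KP$-invariant. Since $KP/K$ is a Sylow $p$-subgroup of $G/K$, which is normal there by the preliminary observation, a final averaging over the complementary Hall $p'$-subgroup upgrades this to a $G$-invariant extension. (This last ``relative extendibility'' bookkeeping is fiddly but elementary; should the shortest route not be the one above, one can instead fold the construction of the invariant extension into the proof of $(1)\Rightarrow(4)$.)

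The heart of the theorem, and the step I expect to be the main obstacle, is $(1)\Rightarrow(4)$. I would induct on $|G|$, keeping $N\nor G$ fixed as the relative datum. Replacing $(G,N,\theta)$ by $(G/\ker\theta,\,N/\ker\theta,\,\bar\theta)$ we may assume $\theta$ faithful, and then the standard character-triple isomorphism reduces us to the case $N=Z\le\zent G$ with $\theta=\lambda$ faithful and linear; the hypothesis that $\theta$ extends to $NP$ becomes precisely $Z\cap P'=1$, i.e. the obstruction to extending $\lambda$ to $G$ (a class in the Schur multiplier of $G/Z$) vanishes on a Sylow $p$-subgroup — exactly the situation in which a $p'$-group complement can carry an extension of $\lambda$. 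Hypothesis $(1)$ now reads: every irreducible projective character of $M/Z$ of $p'$-degree, for the cocycle attached to $\lambda$, extends to $G/Z$. One then reworks the proof of \cite[Theorem~B]{IsCR} in this relative/projective setting: the complement $K/Z$ is produced as an intersection of character kernels (Lemmas~\ref{ker} and \ref{deg}), coprime action together with Lemma~\ref{r} governs the restrictions to centralizers of $P$ that arise, and Tate's theorem~\ref{tate}, applied over $\lambda$, forces the complement into existence; Lemma~\ref{unique} guarantees that the complement obtained after each reduction is the correct one, so the induction closes, and the construction is arranged to deliver a $G$-invariant extension of $\lambda$ to $K$ at the same time. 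The fundamental difficulty is that $\theta$ cannot be quotiented out, so the entire argument of \cite{IsCR} has to be carried out carrying the relative character $\theta$; the delicate points are checking that hypothesis $(1)$ survives the character-triple reduction with the correct notion of $p'$-degree on the projective side, and that Tate's theorem can still be invoked in the presence of $\lambda$.
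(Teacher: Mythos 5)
Your treatment of the easy implications is essentially sound: the ``key device'' you use for $(4)\Rightarrow(2)$ (restriction is a bijection $\irr{G|\hat\theta}\to\irr{N\norm GP|\theta}$ when $G=K\,N\norm GP$, $K\cap N\norm GP=N$ and $\hat\theta$ is $G$-invariant) is exactly \cite[Lemma 6.8(d)]{N}, which is what the paper invokes. But two things go wrong. First, a local one: in your $(3)\Rightarrow(2)$ step, ``a final averaging over the complementary Hall $p'$-subgroup'' is not a meaningful operation on irreducible characters and does not produce a $G$-invariant extension. The correct argument (the paper's $(iii)\Rightarrow(iv)$) is: the set of extensions of $\theta$ to $K$ has $p'$-size by Gallagher, so $P$ fixes one; since $\norm KP=N$ forces $\cent{K/N}P=1$, this $P$-invariant extension is \emph{unique} (Glauberman correspondence, \cite[Theorem 13.31 and Problem 13.10]{Is}), hence $\norm GP$-invariant, hence $G$-invariant because $G=K\norm GP$. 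Without the uniqueness step your conclusion does not follow.

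The serious gap is $(1)\Rightarrow(4)$ (equivalently $(i)\Rightarrow(iii)$), which is the entire content of the theorem and which you do not prove: you only announce that one should ``rework the proof of \cite{IsCR} in this relative/projective setting'' and list the paper's own preliminary lemmas together with the points you expect to be delicate. Concretely, what is missing is: (a) after the character-triple reduction to $N\le\zent G$ one must further twist by a global extension of $\theta_p$ (this is where the hypothesis that $\theta$ extends to $NP$ is actually spent, via \cite[Theorem 6.26]{Is}) and mod out $\ker\theta$, so that $\theta$ is linear of $p'$-order; (b) the mechanism by which hypothesis $(1)$ enters: one fixes extensions $\tilde\tau$ of every $\tau\in\irrp{\norm GP|\theta}$ and, for each $P'\le X\le P$ normal in $\norm GP$, intersects kernels of the $\tilde\tau$ lying over $\theta\times 1_X$ (Lemma~\ref{ker}) to manufacture $L\nor G$ with $L\cap\norm GP=NX$ — your sketch never says which characters are intersected or why $(1)$ supplies them; (c) Tate's theorem then yields a normal $p$-complement of $L$, but closing the induction requires the \emph{relative McKay theorem for $p$-solvable groups} (\cite[Theorem 10.26]{N}), which the paper uses repeatedly (to identify $\irrp{W|\theta}$ with restrictions of the $\tilde\tau$, to settle the case $P$ abelian after Burnside, and in the final descent to $\norm G{P'}$ combined with Lemmas~\ref{r}, \ref{deg} and \ref{unique}); this ingredient is absent from your plan; and (d) the case analysis ($P'\nor G$ with $P'>1$, $P$ abelian, $P'$ not normal) and the verification that the inductively obtained $\rho$ actually restricts to $\theta$ are nontrivial and are where the proof could fail. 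Identifying these as ``the delicate points'' is not the same as resolving them, so the proposal does not establish the main implication.
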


\begin{proof}
First we prove that (iv) and (iii) are equivalent. We only have to
prove that (iii) implies (iv). Since $PN \cap K=N$, we have that
$K/N$ is a group of order not divisible by $p$. By Sylow theory,
$\norm G{PN}=\norm GP N$, and therefore
$\cent{K/N}P=\cent{K/N}{PN/N} \sbs K/N \cap \norm{G/N}{PN/N}=1$. Let
$\eta \in \irr K$ be an extension of $\theta$, and let $\Delta$ be
the set of extensions of $\theta$ to $K$. We have that
$\Delta=\{\lambda \eta \, |\, \lambda \in \irr{K/N}$ is linear$\},$
by Gallagher's theorem (\cite[Corollary 6.17]{Is}). Therefore
$|\Delta|$ is not divisible by $p$. Since $P$ acts on $\Delta$ by
conjugation, because $\theta$ is $G$-invariant, by counting we have
that there is some $P$-invariant $\hat\theta \in \Delta$ extending
$\theta$. Since $\cent{K/N}P=1$, we have that $\hat\theta$ is the
unique $P$-invariant character of $K$ lying over $\theta$
 (using \cite[Theorem 13.31 and Problem 13.10]{Is}. We shall use this argument
 several times).
 We
claim that $\hat\theta$ is also $G$-invariant. It is enough to show
that $\hat\theta$ is $\norm GP$-invariant. If $n \in \norm GP$, then
$\hat\theta^n$ is a $P$-invariant extension of $\theta^n=\theta$. By
uniqueness, we have that $\hat\theta^n=\hat\theta$, as wanted.

To prove that (iv) implies (ii), we apply \cite[Lemma 6.8(d)]{N}.

Since it is clear that (ii) implies (i), to complete the proof of
the theorem,
 it is enough to show that
(i) implies (iii). We argue this by induction on $|G:N|$. Recall
that $N\norm GP/N=\norm{G/N}{PN/N}$. By using the theory of
character triple isomorphisms (see \cite[Chapter 11]{Is}), we may
assume that $N\sbs \zent G$. Hence, $N\sbs \norm GP$.
In particular, $\theta$ is linear.
Write $\theta=\theta_p\theta_{p'}$, where $\theta_p$ has order a
power of $p$ and $\theta_{p'}$ has order not divisible by $p$.
 Since $\theta$ extends to $PN$ by hypothesis,   therefore
so does $\theta_p$, which is a power of $\theta$. Hence, $\theta_p$
extends to some linear character $\nu \in \irr G$ (by \cite[Theorem
6.26]{Is}).
If $\chi  \in {\rm Irr}_{p',{\rm rel}}(\norm GP|\theta_{p'})$,
then $\chi\nu_{\norm GP}
\in {\rm Irr}_{p',{\rm rel}}(\norm GP|\theta)$.
If $\eta \in \irr G$ extends $\chi\nu_{\norm GP}$, then $\eta \nu^{-1}$
extends $\chi$.  Therefore, we may assume that
$\theta$ is a linear character of $p'$-order. By modding out by
$\ker\theta$, we may assume that $N$ is a $p'$-group and that
$\theta$ is faithful. Hence, our hypothesis is that every $\chi \in
\irrp{\norm GP|\theta}$ extends to $G$.  We want to show that there
is $K \nor G$ complementing $\norm GP/N$ in $G$, and that  $\theta$
extends to $K$.  For each $\tau \in \irrp{\norm GP| \theta}$, fix
$\tilde\tau \in \irr{G}$ an extension of $\tau$ to $G$.

If $P' \sbs X \le P$ and $X \nor \norm GP$, we claim that there
exists $L\nor G$ such that $L\cap \norm GP=NX$. Let
$\tilde\theta=\theta \times 1_{X}$. Notice that all $\irr{\norm
GP|\tilde\theta}$ have $p'$-degree because $\norm GP/X$ has
an abelian normal Sylow $p$-subgroup $P/X$. Let
$$U=\bigcap_{\tau \in \irr{\norm GP|\tilde\theta}} \ker{\tilde\tau} \nor G \, .$$
Then, using Lemma \ref{ker}, we have that $U\cap \norm
GP=\ker{\tilde\theta}=X$. Let $L=UN \nor G$. Therefore $L\cap \norm
GP=NX$, as claimed. Notice that in this case, $X=P\cap L \in \syl p
L$.

\smallskip

By letting $X=P'$ in the claim in the previous paragraph of this
proof, let
 $L\nor G$ such that $L\cap \norm GP=NP'$. Since $P\cap L=P'$, by Tate's theorem \ref{tate},
  we have that $L$ has  a normal $p$-complement $K$.
  Also,   $P'$ is a Sylow $p$-subgroup of $L$. Let $W=K\norm GP=L\norm GP$.
 Notice that $\cent{K/N}P=1$ since $\norm KP=N$, and therefore
 over $\theta$ there is a unique
 $P$-invariant $\eta \in \irr K$ (again using \cite[Theorem 13.31 and Problem 13.10]{Is}). By uniqueness, notice that $\eta$ is $\norm GP$-invariant.  We have that
 $$|\irrp{W|\theta}|=|\irrp{\norm GP|\theta}|$$
by the relative McKay conjecture (for $p$-solvable groups),
\cite[Theorem 10.26]{N}. (Notice that $W$ is indeed $p$-solvable, so
we have not used the solution of the general McKay conjecture.) We claim that $\irrp{W|\theta}=\{\tilde\tau_W
\, |\, \tau \in \irrp{\norm GP|\theta}\}$.
Indeed, let $f:\irrp{\norm GP|\theta} \rightarrow \irrp{W|\theta}$
given by $f(\tau)=\tilde\tau_W$. Since $(\tilde\tau)_{\norm GP}=\tau$,
we do have that $f(\tau) \in \irrp{W|\theta}$. Since $f$ is clearly injective,
it is necessarily bijective and the claim follows.
%   (Also, it needed, we have that
%   $\irrp{G|\theta}=\{\tilde\tau  \, |\, \tau \in \irrp{\norm GP|\theta}\}$ by the McKay conjecture.)
Since $\eta$ is the only $P$-invariant irreducible
character of $K$ over $\theta$, it easily
follows that $\irrp{G|\theta}=\irrp{G|\eta}$ and
$\irrp{W|\theta}=\irrp{W|\eta}$.  Indeed,
if $\chi \in \irrp{G|\theta}$, then $\chi_K$ has some
$P$-invariant irreducible constituent, using that $\chi$ has $p'$-degree.
All irreducible constituents of $\chi_K$ lie over $\theta$,
so we deduce that this $P$-invariant constituent should be
$\eta$. Notice then that every ${\rm
Irr}_{p'}(W|\eta)$ extends to $G$.
We claim now that $\eta$ is $G$-invariant.
First notice that since $\eta$ is $P$-invariant and $K$ is
a $p'$-group, we have that $\eta$ extends to
$KP$ (using Corollary 6.2 of \cite{N}). Hence, there is
some $p'$-degree irreducible character of $W$ over $\eta$.
Therefore, there exists $\tau \in \irrp{\norm GP|\theta}$ such that
$\tilde\tau_W$ lies over $\eta$. Since $\eta$ is invariant in $W$,
it follows that $W \le G_\eta$. If $\epsilon $ is the Clifford correspondent of
$\tilde\tau$ over $\eta$, it follows that $\epsilon^G=\tilde\tau$. Since
$\tilde\tau_{G_\eta}$ is irreducible, necessarily $G_\eta=G$.
Hence, if $K>N$, we can apply induction to $\irrp{W|\eta}$ with respect to $G$.

Suppose first  that $P' \nor G$, and  assume next that $P'>1$.
Working in $\bar G=G/P'$ and using induction, we conclude that there
is $P' \le R\nor G$ such that $R\norm GP=G$, and $R\cap \norm
GP=NP'$, and that $\theta \times 1_{P'}$ extends to $\gamma \in
\irr{R}$. Since $P\cap R=P'$, we have that $R$ has a normal
$p$-complement $S$ by Tate's Theorem \ref{tate}, and Sylow
$p$-subgroup $P'$. This normal $p$-complement complements $\norm
GP/N$ in $G/N$.   Since $\gamma_N=\theta$, we have that $\gamma_S$ extends
$\theta$ and we are done, in the case that $P'\nor G$ and $P'>1$.
Assume now that $P$ is abelian.
Hence, we have that all $\irr{\norm GP|\theta}=\irrp{\norm GP|\theta}$ extend to $G$, by hypothesis. By the claim in the fourth paragraph
of this proof (letting $X=P$), let $Y \nor G$ such that $Y \cap \norm GP=NP$. Then
$P \in  \syl pY$ and $G=Y \norm GP$ by the Frattini argument. Now,
$P \sbs \zent {\norm YP}$, and by Burnside's $p$-complement theorem
(Theorem 5.13 of \cite{Is08}),
we have that $Y$ has a normal $p$-complement $Q$. Then $Q\norm GP=G$
and $Q\cap \norm GP=N$. Notice that $G$ is $p$-solvable. Since
$\cent{Q/N}P=1$, let $\mu \in \irr Q$ be the unique $P$-invariant
over $\theta$. By uniqueness,  $\mu$ is $\norm GP$-invariant, and
therefore $G$-invariant. Also
$\irrp{G|\theta}=\irrp{G|\mu}=\irr{G|\mu}$, using that $\mu$ extends
to $QP=Y$, Gallagher \cite[Corollary 6.17]{Is}, and the fact that
$P$ is abelian. By the relative version of the McKay conjecture (in
$p$-solvable groups) \cite[Theorem 10.26]{N}, we have that
$$|\Irr(G|\mu)|=|\irrp{G|\mu}|=|\irrp{G|\theta}|=|\irrp{\norm GP|\theta}|=
|\Irr(\norm G
P|\theta)| \, .$$
By a previous argument, we see that
${\rm Irr}(G|\mu)=\{\tilde\tau \mid \tau \in \irr{\norm GP|\theta}\}$, and we see that restriction defines a bijection
$\irr{G|\mu} \rightarrow \irr{\norm GP|\theta}$.  By Lemma
\ref{deg}, we have that $\mu_N=\theta$, and we are done. Therefore
we may assume that $P'$ is not normal in $G$.

Hence, using the notation of the fifth paragraph of this proof, we
may assume that $K>N$. (Otherwise, $L=NP'=N \times P'$ and
necessarily $P' \nor G$.) We have that every ${\rm
Irr}_{p'}(W|\eta)$ extends to $G$. By induction, there exists $K\le
E \nor G$ complementing $W/K$ and some $\rho \in \irr{E}$ such that
$\rho_K=\eta$. Notice that $E$ complements $\norm GP/N$. In
particular $\cent{E/N}P=1$, and $\rho$ is the only $P$-invariant
over $\theta$. Hence ${\rm Irr}_{p'}(G|\theta)={\rm
Irr}_{p'}(G|\rho)$. By a previous argument, we know that $\rho$ is
$\norm GP$-invariant, and therefore $G$-invariant. We only need to
show that $\rho_N=\theta$. Recall that $G$ is $p$-solvable. Hence
$|\irrp{G|\rho}|=|\irrp{G|\theta}|=|\irrp{\norm GP|\theta}|$
(again by \cite[Theorem 10.26]{N}), and
therefore we deduce that
$$\irrp{G|\rho}=\{\tilde\tau \, |\, \tau \in \irr{\norm GP|\theta} \}\, .$$

Using that $E$ is a $p'$-group, let $\hat\rho \in \irr{EP}$ be an
extension of $\rho$. Since $\hat\rho$ has $p'$-degree and $EP \nor
G$ (because $E \norm GP=G$), it follows that $\hat\rho$ lies under
some $p'$-irreducible character of $G$, call it $\chi$. Hence
$\hat\rho$ lies under some $\tilde\tau=\chi$ for some $\tau \in
\irrp{\norm GP|\theta}$. However $\tau_{P'}$ is a multiple of
$1_{P'}$. Thus $\hat\rho_{P'}$ is a multiple of $1_{P'}$. Write $C=\cent E{P'}$. By Lemma
\ref{r} applied to $EP'$, we have that $\varphi=\hat\rho_{C}=\rho_C$ is irreducible.
 Hence, we have that restriction defines a
bijection $\irr{G|\rho} \rightarrow \irr{\norm G{P'}|\varphi}$, by
\cite[Lemma 6.8(d)]{N}. Notice that $\irr{\norm
G{P'}|\varphi}=\irr{\norm G{P'}|\theta}$ since $\cent{C/N}P=1$. By
the $p$-solvable case of the McKay conjecture \cite[Theorem
10.26]{N} we know that $|\irrp{\norm G{P'}|\theta}|=|\irrp{\norm
GP|\theta}|$. Therefore
$$\irrp{\norm G{P'}|\theta}=\{ \tilde\tau_{\norm G{P'}} |\tau \in \irrp{\norm GP|\theta}\} \, .$$
Since $\norm G{P'}<G$, by induction, we have that $\norm GP/N$ has a normal complement in
$\norm G{P'}/N$ which by Lemma \ref{unique} has to be $C$ and that
$\theta$ extends to $C$. By the first paragraph of this proof, we
have that $\theta$ has a $P$-invariant extension to $C$.
Since $\cent{C/N}P=1$, then this
$P$-invariant extension should be $\varphi=\rho_C$, and therefore
$\rho$ extends $\theta$, as desired.
\end{proof}

\section{The McKay conjecture and inequality between character degrees}

The following refinement of the McKay conjecture has been proposed
in \cite{G}.

\begin{conj}\label{conj:Giannelli-McKay}
Let $G$ be a finite group, $p$ a prime and $P\in\Syl_p(G)$. Then
there is a bijection
$$^*:\Irr_{p'}(G)\rightarrow\Irr_{p'}(\norm G P)$$
with \[\chi^*(1)\leq\chi(1).\] for all $\chi\in\irrp{G}$.
\end{conj}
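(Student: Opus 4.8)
The plan is to attack Conjecture~\ref{conj:Giannelli-McKay} the same way the McKay conjecture itself was ultimately settled: reduce it, by induction on $|G|$, to a statement about finite simple groups — an \emph{inductive Giannelli--McKay condition} — and then verify that condition case by case using the classification. The only feature new relative to plain McKay is that \emph{every} bijection produced along the way must be chosen degree--nonincreasing, so the reduction has to carry this extra bookkeeping throughout; the hope is that it survives the standard machinery because character degrees factor multiplicatively along the short exact sequences that appear.

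For the reduction I would run the Isaacs--Malle--Navarro argument in the form streamlined by Sp\"ath via central isomorphisms of character triples. If $N\nor G$ is nontrivial and $\theta\in\irr N$ is $P$-invariant, one splits $\irrp G$ along $\irr N$: the ratio $\chi(1)/\theta(1)$ for $\chi\in\irr{G|\theta}$ is the degree of a character of the section $G_\theta/N$ (interpreted projectively, or after replacing the triple by an isomorphic one with central $N$), and likewise on the normalizer side one has a ratio $\tau(1)/\theta(1)$ over the $\norm GP$-orbit of $\theta$ with the \emph{same} factor $\theta(1)$; since that common factor cancels, the inequality $\chi^*(1)\le\chi(1)$ for $G$ is equivalent to the corresponding relative inequality in the sections $G_\theta/N$ and $(\norm GP)_\theta/(N\cap\norm GP)$, which is precisely where the inductive hypothesis — fed by the simple-group input for the composition factors — is invoked. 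One then has to check that these relative bijections can simultaneously be made $\Aut$-equivariant and compatible with the Sylow normalizer, and glue them. The conclusion should be: if every nonabelian simple group satisfies the inductive Giannelli--McKay condition at $p$, then the conjecture holds at $p$ — presumably the content of Theorem~\ref{thm:GMC reduction}. I expect this part to be essentially routine once one commits to tracking degrees, since the degree data is compatible with the Clifford correspondence, Gallagher's theorem, and character-triple isomorphisms.

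The substantive work is the simple-group verification. For alternating groups I would use the classical combinatorial McKay bijection (the abacus / $p$-core description of the $p'$-characters and the iterated-wreath-product structure of the Sylow normalizer) and check monotonicity of degrees — plausible because the $p'$-degrees downstairs are assembled from the small factors contributed by the base groups of the successive wreath layers. Sporadic groups and their covers are a finite check. In defining characteristic $P$ is a maximal unipotent subgroup and $\norm GP$ is essentially a Borel, $\irrp G$ is the set of semisimple characters, and the inequality is transparent since every $p'$-character of the Borel has degree dividing $|B:P'|$. The hard case is groups of Lie type in non-defining characteristic: the known $\Aut$-equivariant McKay bijections there are built from Jordan decomposition and $d$-Harish--Chandra theory and are \emph{not} visibly monotone in degree, so one would have to revisit those constructions (Malle, Cabanes--Sp\"ath, and others) and either prove monotonicity or perturb the bijection so as to obtain it.

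That last point is the main obstacle. For $p=2$ the situation is rigid enough — Sylow $2$-normalizers are well understood and Malle--Sp\"ath already have the inductive McKay condition — that upgrading the relevant bijections to be degree--nonincreasing is feasible, which is what should yield Conjecture~A for $p=2$. For a general prime, however, when $p\mid q-1$ or $p\mid q+1$ the interplay between unipotent degrees and the combinatorics of centralizers of semisimple elements is delicate, and there is at present no uniform reason why an equivariant McKay bijection for such groups can be chosen degree--nonincreasing. This is why I would expect the general conjecture to remain out of reach for now, exactly as the introduction reports.
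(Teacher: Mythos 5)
This statement is an open conjecture in the paper: the authors do not prove it in general, and your proposal (rightly) does not either, so what you have written is a program rather than a proof. That program coincides with the paper's actual strategy: formulate an inductive Giannelli--McKay condition for quasisimple groups with cyclic center (their Conjecture~\ref{conj:inductive Giannelli-McKay}), run the Isaacs--Malle--Navarro reduction with central character-triple isomorphisms exactly as in \cite[Theorem~10.26]{N} while tracking the degree inequality through Clifford correspondence, Glauberman correspondence and the $\geq_c$ relation (their Theorem~\ref{thm:GMC reduction}), and then verify the simple-group condition where possible, obtaining the full conjecture only at $p=2$ (Theorems~\ref{thm:p=2} and \ref{thm:p=2Giannelli}). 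Your assessment of where the genuine difficulty lies (non-defining characteristic for odd $p$) also matches the paper, which only settles exceptional Lie type there.

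One concrete point in your sketch is too optimistic and would fail as stated: you claim that in defining characteristic the inequality is ``transparent.'' It is not. The natural bound $b(\bN_G(P))\leq |T/\bZ(G)|$ does beat the minimal $p'$-degree $m(G)$ for most types, but it fails for $G=SU_n(q)$ with $n$ odd and $(n,q+1)=1$: there the torus has a regular orbit on $\Irr(P/P')$, so $\bN_G(P)$ has a character of degree $|T|=(q^2-1)^{(n-1)/2}$, which exceeds $m(G)=(q^n+1)/(q+1)$. The paper must therefore construct an $\Aut(G)$-invariant $p'$-degree character of $G$ of degree larger than $q^{n-1}$ that is not a Weil character (Lemma~\ref{lem:SU}, with a separate argument for $q=2^f$, $f$ odd, via Brunat's counting), pair it equivariantly with the unique regular-orbit character of $\bN_G(P)$, and only then extend to a degree-nonincreasing equivariant bijection using \cite[Lemma~13.23]{Is}. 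A blanket appeal to ``every $p'$-character of the Borel has small degree'' would miss exactly this case, and a similar hands-on repair (matching a single exceptional character) is also needed for some sporadic groups and covers in Proposition~\ref{prop:1}. So your plan is the right one, but the case analysis it defers is where the paper's real work lives.
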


We observe now that if Conjecture~\ref{conj:Giannelli-McKay} is
true, then so is Conjecture A.
%(it is also true that its
%relative version follows from the relative version of
%Conjecture~\ref{conj:Giannelli-McKay}, see Theorem~\ref{thmA}).
\begin{pro}\label{pro:giannelli implies A}
Let $G$ be a finite group, $p$ a prime and $P\in\Syl_p(G)$. If
Conjecture~\ref{conj:Giannelli-McKay} is true for $G$ then
$$\sum_{\chi \in \irrp G} \chi(1)^2 \ge |\norm GP:P'|$$
   with equality if and only  $\norm GP$ has a normal complement in $G$.
\end{pro}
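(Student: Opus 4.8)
The plan is to deduce both parts from the bijection furnished by Conjecture~\ref{conj:Giannelli-McKay}, a character-degree computation inside $N:=\norm GP$, and Theorem~C.

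For the inequality, write $\chi\mapsto\chi^{*}$ for the bijection $\irrp G\to\irrp N$ of Conjecture~\ref{conj:Giannelli-McKay}. Since $\chi^{*}(1)\le\chi(1)$ for every $\chi$,
\[
\sum_{\chi\in\irrp G}\chi(1)^{2}\ \ge\ \sum_{\chi\in\irrp G}\chi^{*}(1)^{2}\ =\ \sum_{\psi\in\irrp N}\psi(1)^{2},
\]
so it suffices to show $\sum_{\psi\in\irrp N}\psi(1)^{2}=|N:P'|$. Here $P\nor N$ is a Sylow $p$-subgroup of $N$; if $\theta\in\irr P$ lies under $\psi\in\irr N$, then $\psi(1)/\theta(1)$ divides $|N:P|$ (the orbit length $|N:I_N(\theta)|$ and the ramification index both divide $|N:P|$), which is prime to $p$, so $\psi(1)_p=\theta(1)$. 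Hence $\psi\in\irrp N$ precisely when $\theta$ is linear, i.e. precisely when $P'\subseteq\ker\psi$ (using $P'\nor N$). Therefore $\irrp N=\irr{N/P'}$ and $\sum_{\psi\in\irrp N}\psi(1)^{2}=\sum_{\psi\in\irr{N/P'}}\psi(1)^{2}=|N/P'|=|N:P'|$. This also shows that equality holds in Conjecture~A for $G$ if and only if $\chi(1)=\chi^{*}(1)$ for all $\chi\in\irrp G$, i.e. if and only if the multisets $\{\chi(1):\chi\in\irrp G\}$ and $\{\psi(1):\psi\in\irrp N\}$ coincide.

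Next, the easy direction of the equality statement: if $N$ has a normal complement in $G$, then by Theorem~C every $\psi\in\irrp N$ extends to $G$. Choosing an extension $\hat\psi\in\irr G$ of each such $\psi$, the map $\psi\mapsto\hat\psi$ is injective (restrict back to $N$), degree-preserving, and lands in $\irrp G$. Since $|\irrp G|=|\irrp N|$ (by \cite{CS}, or already by the bijection $*$), this injection is a bijection, so the two degree multisets agree and $\sum_{\chi\in\irrp G}\chi(1)^{2}=\sum_{\psi\in\irrp N}\psi(1)^{2}=|N:P'|$.

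It remains to prove that equality forces $N$ to have a normal complement in $G$, and this is the step I expect to be the main obstacle. By Theorem~C it is equivalent to show that, once the degree multisets of $\irrp G$ and $\irrp N$ coincide, every $\psi\in\irrp N$ extends to $G$. The guiding point is that $N$ contains a full Sylow $p$-subgroup of $G$ and $|G:N|$ is prime to $p$, so the $p$-part of character degrees is concentrated in $N$; one then wants the coincidence of degree multisets to force, through Clifford theory, that each $\psi\in\irrp N$ restricts from (equivalently, extends to) $G$. Since $N$ need not be normal in $G$, I would not argue with the pair $(G,N)$ directly but instead induct on $|G|$ by way of a relative version with respect to a normal subgroup, in the spirit of Theorem~\ref{thmB}, using character-triple isomorphisms and Tate's Theorem~\ref{tate} to reduce to manageable configurations and only then feeding the degree-multiset hypothesis into a counting argument that pins down the extensions. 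Formulating the correct relative statement so that this induction closes up is where I expect the real difficulty to lie.
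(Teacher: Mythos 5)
Your treatment of the inequality and of the direction ``normal complement $\Rightarrow$ equality'' is correct and essentially follows the paper's route (the paper concludes the latter via Theorem~\ref{thmB} with $N=1$, which is the same statement as Theorem~C since $\norm KP=K\cap \norm GP$; your computation $\sum_{\psi\in\irrp{\norm GP}}\psi(1)^2=|\norm GP:P'|$ is the standard fact the paper uses without comment). The genuine gap is in the remaining direction. You correctly reduce ``equality $\Rightarrow$ normal complement'' to showing that, once the degree multisets of $\irrp G$ and $\irrp{\norm GP}$ coincide, every $\psi\in\irrp{\norm GP}$ extends to $G$; but you do not prove this step. You only indicate that you would seek a new relative statement and an induction on $|G|$ ``in the spirit of Theorem~\ref{thmB}'' and concede that you cannot formulate or close it, so the proof of the key implication is absent as written.

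In fact no new relative induction is needed at this point: the relative machinery is already packaged in Theorem~\ref{thmB}, and the missing step has a short counting argument. Write $H=\norm GP$ and let $a_1=1<\cdots<a_k$ be the common degrees, occurring with common multiplicities. If $\psi\in\irrp H$ has $\psi(1)=a_k$, then $\psi^G(1)=|G:H|\,\psi(1)$ is a $p'$-number, so $\psi^G$ has an irreducible constituent $\chi$ of $p'$-degree; by Frobenius reciprocity $\psi$ is a constituent of $\chi_H$, so $a_k\le\chi(1)$, while $\chi(1)\le a_k$ because $a_k$ is also the largest degree in $\irrp G$, whence $\chi_H=\psi$. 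Now descend through the degrees: for $\delta\in\irrp H$ of degree $a_{k-1}$, a $p'$-degree constituent $\tau$ of $\delta^G$ satisfies $\tau(1)\ge a_{k-1}$, and $\tau(1)=a_k$ is impossible because every degree-$a_k$ member of $\irrp G$ already restricts irreducibly to one of the degree-$a_k$ characters of $H$; hence $\tau_H=\delta$, and matching multiplicities at each level shows that restriction gives a bijection $\irrp G\to\irrp H$. In particular every $\psi\in\irrp H$ extends to $G$, and Theorem~\ref{thmB} with $N=1$ (equivalently Theorem~C) produces the normal complement. This elementary induced-character argument, rather than a new relative reduction, is the idea your proposal is missing.
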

\begin{proof}
Let $\Omega:\irrp{G}\rightarrow\irrp{\norm G P}$ be the bijection
from Conjecture \ref{conj:Giannelli-McKay}. Then
$$\sum_{\chi \in \irrp G} \chi(1)^2 \ge \sum_{\chi\in\irrp G}
\Omega(\chi)(1)^2=\sum_{\psi\in\irrp{\norm G P}}\psi(1)^2 =|\norm
GP:P'|$$ and the inequality part follows. If $$\sum_{\chi \in \irrp
G} \chi(1)^2 =|\norm GP:P'|$$ then for every $\chi\in\irrp{G}$ we
have $\Omega(\chi)(1)=\chi(1)$. We claim  that this implies that
restriction is a bijection $\irrp{G}\rightarrow\irrp{\norm G P}$.
Write $H=\norm GP$. Suppose that $a_1=1< \ldots < a_k$ are the
degrees of $\irrp G$, and therefore, of $\irrp H$, occurring with
multiplicities $m_1, \ldots, m_k$, respectively. Let $\{\psi_1,
\ldots, \psi_{t}\}$ and $\{\chi_1, \ldots, \chi_t\}$
 be the irreducible characters in $\irrp{H}$ and $\irrp G$ of maximal degree $a_k$,
 where $t=m_k$.
 Now, $(\psi_i)^G$ contains an irreducible character of $p'$-degree of $G$ of degree at least $\psi_i(1)$,
using that $(\psi_i)^G(1)$ is not divisible by $p$.
 Necessarily, $\psi_i$ contains some $\chi_j$. Then $(\chi_j)_{H}=\psi_i$, and we can reorder
 so that $(\chi_i)_{H}=\psi_i$.
 Suppose that $\{\delta_1, \ldots, \delta_s\}$ and $\{\eta_1, \ldots, \eta_s\}$
 are the irreducible characters in $\irrp{H}$ and $\irrp G$ of the next degree $a_{k-1}$, with $s=m_{k-1}$.
 Again $(\delta_i)^G$ contains a $p'$-irreducible character $\tau$ with degree at least $a_{k-1}$.
 If $\tau(1)=a_k$, then we know that $\tau_{H}$ is irreducible and then $\tau_H=\delta_i$. This is impossible.
 Hence, we have that $(\delta_i)^G$ contains some $\eta_j$, which necessarily  extends $\delta_i$.
 Reordering, we may assume that $(\eta_i)_H=\delta_i$.  We proceed like this until the claim is proven.

We may now apply Theorem \ref{thmB} with $N=1$ to conclude.
\end{proof}

The purpose of this section is to give a reduction of Conjecture
\ref{conj:Giannelli-McKay} to simple groups. The following is a
slight refinement of the inductive McKay condition, and will be the
condition we impose on quasisimple groups in our reduction. We use
the \emph{central isomorphism} relation $\geq_c$ from
\cite[Definition 10.14]{N}, which generalizes the notion of a
character triple isomorphism.

\begin{conj}\label{conj:inductive Giannelli-McKay}
Let $S$ be a quasisimple group with cyclic center, $P\in\Syl_p(S)$
and $A=\Aut(S)_P$. Then there is an $A$-stable subgroup $\norm S
P\leq M < S$ and an $A$-equivariant bijection
$$\Psi:\Irr_{p'}(S)\rightarrow\Irr_{p'}(M)$$ with
\[\Psi(\chi)(1)\leq\chi(1)\] and $$(S\rtimes
A_\chi,S,\chi)\geq_c(M\rtimes A_\chi, M, \Psi(\chi))$$ for every
$\chi\in\Irr_{p'}(S)$.
\end{conj}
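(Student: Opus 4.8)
The plan for establishing Conjecture~\ref{conj:inductive Giannelli-McKay} is a case analysis along the Classification of Finite Simple Groups: alternating groups and their proper covers, sporadic groups and the Tits group, groups of Lie type in their defining characteristic, and groups of Lie type in non-defining characteristic. We may assume $p\mid|S|$, since otherwise $\N_S(P)=S$ and there is no admissible subgroup $M$ — and this case is not needed in the reduction to simple groups. As usual one first chooses a Sylow subgroup $P$ so that $A=\Aut(S)_P$ is as large as possible, and the cyclic-center hypothesis keeps the whole discussion within reach of the $\geq_c$-formalism (in particular it discards the handful of exceptional Schur multipliers). The organizing principle is that the inductive McKay condition, in the $\geq_c$ formulation or a stronger one, is by now known for almost all of these groups, so on top of the existing $A$-equivariant McKay bijections there are only two new requirements: (i) arranging that the bijection takes values in an $A$-stable subgroup $M$ with $\N_S(P)\leq M<S$ — usually $M=\N_S(P)$, but the freedom to take $M$ a relative normalizer of a $\Phi_d$-torus or of an $e$-split Levi is exactly what lets us reuse a known bijection while preserving the central isomorphism; and (ii) the degree inequality $\Psi(\chi)(1)\leq\chi(1)$. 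Since $\geq_c$ is transitive, once the correct $M$ is identified step (i) is bookkeeping, and the substance is (ii).

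The more tractable families go as follows. For the sporadic groups and the Tits group, $\Irr_{p'}(S)$, $\Irr_{p'}(\N_S(P))$ and the relevant automorphisms are available from the character-table libraries, $\Out(S)$ is cyclic or otherwise tiny so that $A$-equivariance is easy to monitor, and for each $S$ and $p$ one exhibits the bijection, checks $\Psi(\chi)(1)\leq\chi(1)$ directly, and reads off $\geq_c$ from the known inductive-McKay data. For alternating groups and their covers one uses the combinatorial description of $p'$-degree characters via $p$-cores and $p$-quotients together with the structure of $\N_{\mathsf{A}_n}(P)$; the key point is that the McKay bijection can be chosen degree non-increasing, which is available from the work around Giannelli's conjecture, and the small exceptional covers are treated by hand. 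For groups of Lie type in defining characteristic, $\N_S(P)$ is the image of a Borel $\bB^F=\bU^F\rtimes\bT^F$, the $A$-equivariant bijection and $\geq_c$ come from the work on defining-characteristic characters (Sp\"ath, Maslowski), and the degree inequality is easy since a $p'$-degree character of $\bB^F$ lies over a linear character of $\bU^F$ and hence has degree dividing $|\bT^F|$, which is dominated by $\chi(1)$ for every non-trivial $\chi\in\Irr_{p'}(S)$, the finitely many small $q$ being checked directly. Symmetrically, whenever $P$ is cyclic the Brauer-tree/Dade picture makes $\N_S(P)$, its $p'$-degree characters and the bijection fully explicit, a degree non-increasing $A$-equivariant choice is immediate, and $\geq_c$ is inherited from the cyclic-defect case of the inductive McKay condition.

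The main obstacle is groups of Lie type in non-defining characteristic $p$. One starts from $d$-Harish-Chandra theory (Brou\'{e}--Malle--Michel) and the bijection between $\Irr_{p'}(S)$ and $\Irr_{p'}(M)$ with $M=\N_S(\bS)$ for $\bS$ a Sylow $\Phi_d$-torus (so $\N_S(P)\leq M$, with equality generically), as already used in the proofs of the inductive McKay condition for these groups (Malle, Cabanes--Sp\"ath, and others). Two things must be added to that input. First, the degree inequality: every character in $\Irr_{p'}(M)$ is, via Clifford theory over $\bS^F$, built from a linear character of $\bS^F$ and a character of the relative Weyl group $W_S(\bS)$, hence has degree at most $|W_S(\bS)|$, whereas a non-trivial $\chi\in\Irr_{p'}(S)$ in the matching $d$-series has degree a product of cyclotomic-polynomial values in $q$ that exceeds $|W_S(\bS)|$ for all but finitely many pairs (rank, $q$); the residual small cases are settled computationally. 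Making this uniform across all Dynkin types, with the subtle behaviour at bad primes and at the linear/unitary primes, is where the real effort lies. Second, one must ensure that the central isomorphism survives the passage between the generic $M$ and an $A$-stable subgroup containing $\N_S(P)$ when these differ, which requires tracking how the diagonal, field and graph automorphisms of $S$ act; the hardest subcases are types $A$ and ${}^2A$, where $\Out(S)$ is largest and the center obstructs, and there one should import the relevant constructions from the corresponding inductive-McKay verifications and add the degree bound on top. I expect the complete argument to be long, but essentially routine outside these non-defining Lie-type cases.
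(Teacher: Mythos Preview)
The statement you are attempting to prove is labeled a \emph{conjecture} in the paper, and indeed the paper does not prove it in full. What the paper establishes is Conjecture~\ref{conj:inductive Giannelli-McKay} (in fact the slightly stronger Conjecture~\ref{thm:simple} with $M=\bN_S(P)$) only for: exceptional covers (Proposition~\ref{prop:1}), groups of Lie type in defining characteristic (Proposition~\ref{prop:defining-char}), groups of exceptional Lie type in cross characteristic (Proposition~\ref{prop:cross-char}), and all quasisimple groups when $p=2$ (Theorem~\ref{thm:p=2}). The cases of alternating groups at odd $p$ and of classical groups of Lie type in non-defining odd characteristic are left open.

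Your outline has genuine gaps precisely at these open cases. For alternating groups you write that a degree-non-increasing McKay bijection ``is available from the work around Giannelli's conjecture,'' but Giannelli's conjecture is exactly Conjecture~\ref{conj:Giannelli-McKay}, whose inductive version you are trying to prove; citing it here is circular, and the paper treats alternating groups only for $p=2$, where the Sylow normalizer equals the Sylow subgroup. For classical groups in non-defining characteristic your plan is to bound $b(\bN_S(P))$ by $|W_S(\bS)|$ and compare with $m(S)$. The paper uses precisely this strategy for the exceptional types, but explicitly warns (just after Proposition~\ref{pro:key}) that the underlying Hypothesis~\ref{hypo} ``fails quite often\ldots\ when $S$ is a cover of an alternating or a simple classical group (in characteristic not equal to $p$).'' Concretely, for $SL_n(q)$ with $p\mid q-1$ the relative Weyl group has order $n!$, which for fixed $q$ and growing $n$ overtakes the minimal nontrivial $p'$-degree; your ``residual small cases are settled computationally'' does not cover an infinite family. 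So the sketch does not close the classical cross-characteristic case, and with it the conjecture remains open, in agreement with the paper.
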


\begin{thm}\label{thm:perfect}
Let $K\normal G$ be perfect, and assume $K/\zent K\cong X^n$ where
$X$ is a finite simple group and $\zent K$ cyclic. Assume every
quasisimple group $S$ with $S/\zent S\cong X$ and $\zent S$ cyclic
satisfies Conjecture~\ref{conj:inductive Giannelli-McKay}. Let
$Q\in\Syl_p(G)$ and $R=Q\cap K$. Then there exists a $\norm G
R$-stable subgroup $\norm K R\leq H < K$ and a $\norm G
R$-equivariant bijection
$$\Psi:\Irr_{p'}(K)\rightarrow\Irr_{p'}(H)$$ such that \[\Psi(\theta)(1)\leq\theta(1)\] and
$$(G_\theta,K,\theta)\geq_c(H\norm G R_\theta, H,\Psi(\theta))$$
for all $\theta\in\Irr_{p'}(K)$.
\end{thm}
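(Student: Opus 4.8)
The strategy is the standard one for passing from a statement about quasisimple groups to a statement about a normal perfect subgroup $K$ of $G$ with $K/\zent K \cong X^n$: reduce to the case where $K$ is a central product of $n$ copies of a single quasisimple group $S$ with $S/\zent S \cong X$, handle the ``diagonal'' action of $G$ permuting the factors, and then invoke Conjecture~\ref{conj:inductive Giannelli-McKay} on one factor together with the compatibility of $\geq_c$ with direct/central products and with the wreath-product-type constructions. First I would fix $Q \in \Syl_p(G)$, $R = Q \cap K \in \Syl_p(K)$, and write $K = S_1 * \cdots * S_n$ (central product) where the $S_i$ are the components of $K$, all isomorphic to a common quasisimple $S$ with cyclic center and $S/\zent S \cong X$; note $R = R_1 * \cdots * R_n$ with $R_i = R \cap S_i \in \Syl_p(S_i)$. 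The group $G$ permutes the $S_i$, and $\norm G R$ permutes the $R_i$; let $P_i \in \Syl_p(S_i)$ be $R_i$ and set $A_i = \Aut(S_i)_{P_i}$. Applying Conjecture~\ref{conj:inductive Giannelli-McKay} to each $S_i$ (transporting the bijection for $S$ along a fixed isomorphism $S \to S_i$) yields an $A_i$-stable $\norm{S_i}{P_i} \le M_i < S_i$ and an $A_i$-equivariant degree-decreasing bijection $\Psi_i: \Irr_{p'}(S_i) \to \Irr_{p'}(M_i)$ with the required $\geq_c$ relation. Set $H = M_1 * \cdots * M_n$; then $\norm K R = \norm{S_1}{P_1} * \cdots * \norm{S_n}{P_n} \le H < K$, and $H$ is $\norm G R$-stable because $\norm G R$ permutes the factors and each $M_i$ is the image of a fixed $M \le S$.

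The next step is to build $\Psi: \Irr_{p'}(K) \to \Irr_{p'}(H)$. Since $K$ is a central product, every $\theta \in \Irr_{p'}(K)$ is (up to the identification coming from $\zent K$) essentially $\theta_1 \otimes \cdots \otimes \theta_n$ restricted appropriately, with each $\theta_i \in \Irr_{p'}(S_i)$ agreeing on the central amalgamation; I define $\Psi(\theta)$ by replacing each $\theta_i$ by $\Psi_i(\theta_i)$, which is legitimate because $\Psi_i$ fixes the central character (this is part of what $\geq_c$ over the center guarantees, so the tensor factors still glue along $\zent K$). Then $\Psi(\theta)(1) = \prod \Psi_i(\theta_i)(1) \le \prod \theta_i(1) = \theta(1)$, giving the degree inequality. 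For $\norm G R$-equivariance, I would use that each $\Psi_i$ is $A_i$-equivariant (and that the $A_i$ are ``the same'' via the fixed isomorphisms), so that an element of $\norm G R$ that sends $S_i \to S_{\sigma(i)}$ and acts on $S_i$ via some automorphism in $A_i$ permutes the $\theta_i$ and twists them exactly as $\Psi$ commutes with; this is a bookkeeping argument with the wreath structure. The key technical input here is that the inductive condition is phrased with $\Aut(S)_P$ rather than with a proper subgroup, which is precisely what makes the equivariance survive the permutation action of $G$ on the factors --- this is the point the authors flag as the reason for the ``slight refinement'' over the ordinary inductive McKay condition.

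For the character-triple statement $(G_\theta, K, \theta) \geq_c (H\norm G R_\theta, H, \Psi(\theta))$, I would assemble it from the factor-wise relations $(S_i \rtimes (A_i)_{\theta_i}, S_i, \theta_i) \geq_c (M_i \rtimes (A_i)_{\theta_i}, M_i, \Psi_i(\theta_i))$ using the standard ``product'' and ``wreath product'' lemmas for the relation $\geq_c$ (the analogues for $\geq_c$ of \cite[Lemma 10.5, Lemma 10.?]{N}-type results on central isomorphisms of character triples being compatible with central products and with the action of a group permuting isomorphic triples). Concretely: first take the central product of the $n$ factor relations to get $\geq_c$ between triples over $K$ and over $H$ but only with respect to $\prod (A_i)_{\theta_i}$ acting coordinatewise; then extend along the permutation action to incorporate the full stabilizers $G_\theta$ and $(H\norm G R)_\theta$, using that $G$ embeds (modulo $\C_G(K)$) into $\Aut(S) \wr \Sym(n)$ and $G_\theta$ into the corresponding stabilizer, and similarly for $H\norm G R_\theta$. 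One has to check that $G_\theta$ and $(H\norm G R)_\theta$ correspond under this embedding --- i.e. that an outer element stabilizing $\theta$ also stabilizes $\Psi(\theta)$ and vice versa --- which again follows from the $A_i$-equivariance of the $\Psi_i$ together with the fact that $\Psi_i$ determines $\Psi$. I expect the main obstacle to be exactly this last bookkeeping: correctly tracking how $G_\theta$ sits inside the wreath product, verifying that $H \norm G R$ (not some larger or smaller group) is the right partner, and citing or proving the precise form of the ``$\geq_c$ is compatible with wreath products'' lemma --- the ordinary inductive-McKay literature has such a lemma for character triple isomorphisms, and one needs the routine but slightly delicate check that it goes through verbatim for the weaker relation $\geq_c$ and that it is compatible with the extra degree-monotonicity constraint carried along.
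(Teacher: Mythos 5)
Your plan is essentially the paper's own proof: the authors also write $K$ as a central product of its components $S_i$, take $H$ to be the product of the subgroups $M$ from Conjecture~\ref{conj:inductive Giannelli-McKay}, define $\Psi$ factor-wise, and then simply follow the proof of \cite[Theorem~10.25]{N} (with the $S_i$ in place of the universal covering group of $X$) for the equivariance and the $\geq_c$ statement, noting that the construction preserves the degree inequality. The only caveat is a harmless one: the components need not all be isomorphic to a single quasisimple $S$, but this does not matter since the hypothesis covers every cyclic-center quasisimple group over $X$.
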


\begin{proof}
We have that $K$ is a central product $S_1\cdots S_n$ where
$\zent{S_i}=\zent K$, each $S_i$ is perfect and $S_i/\zent{S_i}\cong
X$. Use the proof of \cite[Theorem~10.25]{N} using the $S_i$'s
instead of the universal covering group of $X$, and noticing that
the construction of the bijection $\Psi$ satisfies the degree
inequality if one assumes Conjecture~\ref{conj:inductive
Giannelli-McKay}. Note that the subgroup $H$ is constructed by
taking the product of the subgroups $M$ appearing in the statement
of the Conjecture~\ref{conj:inductive Giannelli-McKay}.
\end{proof}

The next result follows the proof of \cite[Theorem 10.26]{N}. We
sketch it for the reader's convenience. Recall that we say a nonabelian finite
simple group $S$ is \emph{involved} in $G$ if there exist $H\normal K\leq G$ with $K/H\cong S$.

\begin{thm}\label{thm:GMC reduction}
Assume that, for every simple group $X$ of order divisible by $p$
involved in $G$, Conjecture~\ref{conj:inductive Giannelli-McKay}
holds whenever $S$ is a quasisimple group with $S/\zent S\cong X$
and $\zent S$ is cyclic. Let $Z\normal G$, $P\in\Syl_p(G)$, $\lambda\in\Irr(Z)$ and
assume $\lambda$ is $P$-invariant. Then there is a bijection
$$\Omega:\Irr_{p'}(G|\lambda)\rightarrow\Irr_{p'}(Z\norm G P|\lambda)$$
with $\Omega(\chi)(1)\leq\chi(1)$ for all $\chi\in\Irr_{p'}(G)$.
\end{thm}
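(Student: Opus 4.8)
The plan is to follow the standard Dade--Isaacs--Navarro-type reduction exactly as in the proof of \cite[Theorem 10.26]{N}, tracking the degree inequality throughout. First I would set up an induction on $|G:Z|$. If $Z\norm G P=G$ there is nothing to prove, so assume $Z\norm G P<G$. The key structural step is to choose a normal subgroup $N$ of $G$ with $Z\le N$ and $N/Z$ a chief factor of $G/Z$, and then split into the two usual cases according to whether $N/Z$ is a $p$-group or a $p'$-group (or, more precisely, whether $N\le Z\norm G P$ or not).

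In the case $N\le Z\norm G P$ (which includes the $p$-group case after passing to a Hall-type argument), one replaces $G$ by a proper overgroup of $Z\norm G P$ and applies the inductive hypothesis in two stages — from $G$ down to an intermediate subgroup and then down to $Z\norm G P$ — composing the two bijections; since each stage is degree-non-increasing, so is the composite. In the case $N\not\le Z\norm G P$, the relevant chief factor $K:=N$ (or rather its perfect part modulo $Z$) is dealt with by Theorem~\ref{thm:perfect}: that theorem, under Conjecture~\ref{conj:inductive Giannelli-McKay} for the simple groups involved, produces a $\norm G R$-stable $H<K$ and a $\norm G R$-equivariant bijection $\Psi\colon\Irr_{p'}(K)\to\Irr_{p'}(H)$ with $\Psi(\theta)(1)\le\theta(1)$ and the central-isomorphism condition $(G_\theta,K,\theta)\ge_c(H\norm G R_\theta,H,\Psi(\theta))$. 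One then uses the Clifford-theoretic machinery — lying over $\lambda$, passing to stabilizers, and invoking the $\ge_c$ relation to transport characters through the quotient — to glue $\Psi$ together with an inductive application of the theorem to the strictly smaller group $H\norm G R$ (or its relevant section), again composing degree-non-increasing maps. The point of carrying the extra $\ge_c$ datum in Conjecture~\ref{conj:inductive Giannelli-McKay}, and of Theorem~\ref{thm:perfect}, is precisely to make this gluing legitimate while preserving the inequality.

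Concretely, the steps in order are: (1) reduce to $Z\norm G P<G$ and set up induction on $|G:Z|$; (2) pick a chief factor $N/Z$ of $G/Z$; (3) if $N\le Z\norm G P$, factor through the intermediate subgroup $Z\norm G N$ and compose two inductive bijections; (4) if $N\not\le Z\norm G P$, let $K$ be the preimage machinery as in \cite[Theorem 10.26]{N}, apply Theorem~\ref{thm:perfect} to get $(\Psi,H)$ with the degree inequality and the $\ge_c$ relation; (5) use $\ge_c$ and Clifford correspondence to replace the pair $(G,K,\lambda)$ data by $(H\norm G R,H,\cdot)$ data, apply the inductive hypothesis there, and finally compose to obtain $\Omega$ on $\Irr_{p'}(G|\lambda)$; (6) check at each composition that degrees do not increase — which is immediate because every bijection in sight is individually degree-non-increasing.

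The main obstacle is bookkeeping rather than a genuinely new idea: one must verify that the bijection constructed in the proof of \cite[Theorem 10.26]{N} really does respect degrees at every stage once Conjecture~\ref{conj:inductive Giannelli-McKay} (with its degree clause) is assumed, and in particular that the character-triple / central isomorphisms used to descend from $K$ to $H$ do not silently rescale degrees in a way that breaks the inequality. This is exactly the point flagged in Theorem~\ref{thm:perfect}, so the honest thing to do is to cite \cite[Theorem 10.26]{N} for the combinatorial skeleton and only indicate the (few) places where the inequality must be checked, rather than reproducing the entire argument. Hence I would write the proof as a sketch — ``follow the proof of \cite[Theorem 10.26]{N}, using Theorem~\ref{thm:perfect} in place of the inductive McKay condition and observing that every bijection produced along the way is degree-non-increasing, so that the composite $\Omega$ satisfies $\Omega(\chi)(1)\le\chi(1)$'' — which is consistent with the sentence already in the excerpt announcing that we sketch it for the reader's convenience.
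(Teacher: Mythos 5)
There is a genuine gap: your case division misses the chief factors to which Theorem~\ref{thm:perfect} simply does not apply. After the standard reductions ($\lambda$ $G$-invariant by Clifford correspondence, $Z$ central cyclic by character triples) and the composition argument when $L\norm G P<G$, the remaining situation $L\norm G P=G$ splits into \emph{two} cases, not one: $L/Z$ semisimple, and $L/Z$ a $p'$-group (possibly abelian). In the latter case $L$ has no perfect quasisimple layer at all (its derived subgroup may lie in $Z$), so the ``perfect part modulo $Z$'' you propose to feed into Theorem~\ref{thm:perfect} does not exist, and Conjecture~\ref{conj:inductive Giannelli-McKay} is irrelevant. The paper handles this case by taking a normal $p$-complement $K$ of $L$, using the $P$-Glauberman correspondence $\theta\mapsto\theta^*$ between $\Irr(K|\mu)$ and $\Irr(\bC_K(P)|\mu)$, and invoking Turull's results (\cite{T08,T09}) to get character-triple isomorphisms $(G_\theta,K,\theta)\cong(\norm G P_{\theta^*},\bC_K(P),\theta^*)$ preserving degree ratios; the degree inequality then comes precisely from the fact that the Glauberman correspondent satisfies $\theta^*(1)\leq\theta(1)$. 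None of this is in your outline, and no amount of appeal to Theorem~\ref{thm:perfect} or to the inductive condition on simple groups will supply it.

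Two further, smaller points. First, your description of the case $N\leq Z\norm G P$ is off: there $N\norm G P=Z\norm G P$, so there is no ``proper overgroup of $Z\norm G P$'' to route through; the correct dichotomy is $L\norm G P<G$ versus $L\norm G P=G$, and in the first case one composes the bijection $\Irr_{p'}(G|\lambda)\to\Irr_{p'}(L\norm G P|\lambda)$ (obtained by gluing inductive bijections over a transversal of $P$-invariant characters of $L$, using \cite[Lemma~9.3]{N} to match the transversals) with an inductive bijection down to $\Irr_{p'}(\norm G P|\lambda)$. Second, ``every bijection in sight is degree-non-increasing, hence so is the composite'' is not quite enough at the Clifford-correspondence steps: induction from stabilizers multiplies degrees by indices on both sides, so one must also check inequalities of the form $|G:G_\mu|\geq|H\norm G R:H\norm G R_{\Psi(\mu)}|$ (and likewise $|G:G_\lambda|\geq|Z\norm G P:(Z\norm G P)_\lambda|$ at the very first reduction) before concluding that the induced characters still satisfy the degree condition. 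These are exactly the verifications the paper's sketch records, and they are not automatic consequences of the individual bijections being degree-non-increasing.
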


\begin{proof}
We argue by induction on $|G:Z|$. Notice that
$|G:G_\lambda|\geq|Z\norm G P:(Z\norm G P)_\lambda|$, so that by the
Clifford correspondence we may assume $\lambda$ is $G$-invariant. By
character triple isomorphisms we may assume $Z$ is central and
cyclic and $\lambda$ is linear and faithful. Let $L/Z$ be a chief
factor of $G/Z$. Let $\Delta$ be a $G$-transversal on the set of
$P$-invariant characters in $\Irr(L|\lambda)$ lying under some $\chi\in\Irr_{p'}(G)$ and notice that
\cite[Lemma~9.3]{N} implies that $\Delta$ is also a $\norm G
P$-transversal on the $P$-invariant characters in $\Irr(L|\lambda)$ lying under some $\chi\in\Irr_{p'}(L\norm G P)$.
This implies that
$$\Irr_{p'}(G|\lambda)=\bigsqcup_{\theta\in\Delta}\Irr_{p'}(G|\theta)$$ and
$$\Irr_{p'}(L\norm G P|\lambda)=\bigsqcup_{\theta\in\Delta}\Irr_{p'}(L\norm G P|\theta).$$
By induction, for every $\mu\in\Delta$ there is a bijection
$$\Omega_\mu:\Irr_{p'}(G|\mu)\rightarrow\Irr_{p'}(L\norm G P|\mu)$$
satisfying $\Omega_\mu(\chi)(1)\leq\chi(1)$ for every
$\chi\in\Irr_{p'}(G|\mu)$. We define
$\Omega_1:\Irr_{p'}(G|\lambda)\rightarrow\Irr_{p'}(L\norm G
P|\lambda)$ by $\Omega_1(\chi)=\Omega_\mu(\chi)$ if
$\chi\in\Irr(G|\mu)$ and we have $\Omega_1$ is a bijection
satisfying $\Omega_1(\chi)(1)\leq\chi(1)$ for every
$\chi\in\Irr_{p'}(G|\lambda)$. If $L\norm G P<G$ then by induction
we have a bijection $$\Omega_0:\Irr_{p'}(L\norm G
P|\lambda)\rightarrow\Irr_{p'}(\norm G P|\lambda)$$ also satisfying
$\Omega_0(\psi)(1)\leq\psi(1)$ for every
$\psi\in\Irr_{p'}(G|\lambda)$. We take
$\Omega:=\Omega_0\circ\Omega_1$ and this bijection satisfies the
desired properties. Therefore we may assume $L\norm G P=G$. In
particular, $L/Z$ is not a $p$-group.

If $L/Z$ is a $p'$-group then we write $Z=Z_p\times Z_p'$ where
$Z_p\in\Syl_p(Z)$ and $\mu=\lambda_{Z_{p'}}$ and
$\nu=\lambda_{Z_p}$. Let $K\normal L$ be a $p$-complement of $L$. If
$\Delta$ is a $G$-transversal on the set of $P$-invariant characters
of $\Irr(K|\mu)$ and $^*:\Irr_P(K)\rightarrow\Irr(C)$ is the
$P$-Glauberman correspondence, where $C=\cent K P$, then notice that
this correspondence restricts to a bijection
$^*:\Irr_P(K|\mu)\rightarrow\Irr(C|\mu)$. We have
$$\Irr_{p'}(G|\lambda)=\bigsqcup_{\theta\in\Delta}\Irr_{p'}(G|\theta\times\nu)$$ and
$$\Irr_{p'}(\norm G P|\lambda)=\bigsqcup_{\theta\in\Delta}\Irr_{p'}(\norm G P|\theta^*\times\nu)$$
again applying \cite[Lemma 9.3]{N}. Now by \cite[Theorem~6.5]{T08}
and \cite[Theorem~7.12]{T09} we have that the character triples
$(G_\theta,K,\theta)$ and $(\norm G P_{\theta^*},C,\theta^*)$ are
isomorphic, which implies that there is a bijection
$$\Phi_\theta:\Irr(G_\theta|\theta)\rightarrow\Irr(\norm G P_{\theta^*}|\theta^*)$$
preserving character degree ratios (and in particular, restricting
to $p'$-degree characters). Since $\theta^*(1)\leq \theta(1)$, we
have $\Phi_\theta(\psi)(1)\leq\psi(1)$ for all
$\psi\in\Irr_{p'}(G_\theta|\theta)$. Further, it follows from the
definition of character triple isomorphism that $\Phi_\theta$
restricts to a bijection
$$\Phi_\theta:\Irr_{p'}(G_\theta|\theta\times\nu)\rightarrow\Irr_{p'}(\norm G P_{\theta^*}|\theta^*\times\nu)$$ with $\Phi_\theta(\psi)(1)\leq\psi(1)$ for all $\psi\in\Irr_{p'}(G_\theta|\theta\times\nu)$. Since $G_\theta\cap\norm G P=\norm G P_{\theta^*}$, using the Clifford correspondence we find a bijection
$$\hat\Phi_\theta:\Irr_{p'}(G|\theta\times\nu)\rightarrow\Irr_{p'}(\norm G P|\theta^*\times\nu)$$ with $\hat\Phi_\theta(\chi)(1)\leq\chi(1)$ for all $\chi\in\Irr_{p'}(G_\theta|\theta\times\nu)$ and we are done by defining
$$\Omega:\Irr_{p'}(G|\lambda)\rightarrow\Irr_{p'}(\norm G P|\lambda)$$
by $\Omega(\chi):=\hat\Phi_\theta(\chi)$ where $\theta\in\Delta$
lies below $\chi$.

Therefore we may assume $L/Z$ is semisimple. Let $K=L'$, $Z_1=Z\cap
K=\zent{K}$, and $\nu=\lambda_{Z_1}$. Then $K$ is perfect and
satisfies the hypotheses of Theorem~\ref{thm:perfect}. Let $R=P\cap
K$. There exists a $\norm G R$-stable subgroup $\norm K R\leq H<K$
and an $\norm G R$-equivariant bijection
$$\Psi:\Irr_{p'}(K)\rightarrow\Irr_{p'}(H)$$ satisfying
$\Psi(\eta)(1)\leq\eta(1)$ and inducing central character triple
isomorphisms. By the definition of $\geq_c$ we see that $\Psi$
restricts to a bijection
$$\Psi:\Irr_{p'}(K|\nu)\rightarrow\Irr_{p'}(H|\nu)$$ with the same properties. The character triple isomorphisms induce bijections $$\Phi_\mu:\Irr_{p'}(G_\mu|\mu)\rightarrow\Irr_{p'}(H\norm G R_{\Psi(\mu)}|\Psi(\mu))$$ which satisfy $\Phi_\mu(\chi)(1)\leq\chi(1)$, and send characters over the central product $\mu\cdot\lambda\in\Irr_{p'}(L)$ to characters over $\Psi(\mu)\cdot\lambda\in\Irr_{p'}(HZ)$. Again, $|G:G_\mu|\geq|H\norm G R:M\norm G R_{\Psi(\mu)}|$ so by the Clifford correspondence and the above remark we may find a bijection
$$\hat\Phi_\mu:\Irr_{p'}(G|\mu\cdot\lambda)\rightarrow\Irr_{p'}(H\norm G R|\Psi(\mu)\cdot\lambda)$$ satisfying
$\hat\Phi_\mu(\chi)(1)\leq\chi(1)$ for all
$\chi\in\Irr_{p'}(G|\mu\cdot\lambda)$. Now by taking transversals
over the $P$-invariant characters in $\Irr_{p'}(K|\nu)$ that lie under characters $\chi\in\Irr_{p'}(G)$ and arguing
as before we obtain a bijection
 $$\Omega_0:\Irr_{p'}(G|\lambda)\rightarrow\Irr_{p'}(H\norm G R|\lambda)$$ satisfying $\Omega_0(\chi)(1)\leq\chi(1)$ for every $\chi\in\Irr_{p'}(G|\lambda)$. Since $H<K$ and $L\norm G P=G$ we have $\norm G P\leq H\norm G R<G$ so, by induction we have a bijection
 $$\Omega_1:\Irr_{p'}(H\norm G R|\lambda)\rightarrow\Irr_{p'}(\norm G P|\lambda)$$ satisfying $\Omega_1(\psi)(1)\leq \psi(1)$ for all $\psi\in\Irr_{p'}(H\norm G R|\lambda)$ and the result follows by taking $\Omega:=\Omega_1\circ\Omega_0$.
\end{proof}

Conjecture~\ref{conj:Giannelli-McKay} can be recovered by setting
$Z=1$ in Theorem~\ref{thm:GMC reduction}. By arguing as in
Proposition~\ref{pro:giannelli implies A} and using
Theorem~\ref{thm:GMC reduction} we obtain a relative version of
Conjecture~A. If $N\nor G$ and $\theta\in\irr N$ recall that we write
$$\irrpr{G|\theta}=\{\chi\in\irr{G|\theta}\mid
p \text{ does not divide } \chi(1)/\theta(1)\}.$$
\begin{pro}
Let $G$ be a finite group and $P \in \syl pG$. Let $N \nor G$, and
let $\theta \in \irr N$ be $G$-invariant such that $\theta$ extends
to $NP$. Assume Conjecture~\ref{conj:inductive Giannelli-McKay}
holds for every covering group of every simple group of order
divisible by $p$ involved in $G$. Then
$$\sum_{\chi \in \irrpr{G|\theta}} (\chi(1)/\theta(1))^2 \ge \sum_{\tau \in \irrpr{N\norm GP|\theta}} (\tau(1)/\theta(1))^2$$
with equality if and only  there is a normal complement $K/N$ to
$\norm GP N/N$ in $G/N$ such that $\theta$ has an extension
$\hat\theta \in \irr K$.
\end{pro}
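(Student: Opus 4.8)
The plan is to imitate the proof of Proposition~\ref{pro:giannelli implies A}, replacing the input of Conjecture~\ref{conj:Giannelli-McKay} there by the relative reduction Theorem~\ref{thm:GMC reduction}, and the use of Theorem~\ref{thmB} with $N=1$ there by Theorem~\ref{thmB} itself. Concretely, I would first apply Theorem~\ref{thm:GMC reduction} with $Z=N$ and $\lambda=\theta$: its hypothesis on quasisimple groups is precisely what we are assuming, $\theta$ is $P$-invariant because it is $G$-invariant, and so we obtain a bijection $\Omega\colon\irrpr{G|\theta}\to\irrpr{N\norm GP|\theta}$ with $\Omega(\chi)(1)\le\chi(1)$ for all $\chi\in\irrpr{G|\theta}$ (the sets written $\Irr_{p'}(\,\cdot\,|\theta)$ in that theorem are our $\irrpr{\,\cdot\,|\theta}$, since the character triple and central isomorphisms used in its proof preserve degree ratios). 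Squaring, summing, and reindexing through $\Omega$ gives
$$\sum_{\chi\in\irrpr{G|\theta}}(\chi(1)/\theta(1))^2\ \ge\ \sum_{\chi\in\irrpr{G|\theta}}(\Omega(\chi)(1)/\theta(1))^2\ =\ \sum_{\tau\in\irrpr{N\norm GP|\theta}}(\tau(1)/\theta(1))^2,$$
which is the inequality. I would also record here that $|G:N\norm GP|$ is prime to $p$, since $N\norm GP\supseteq P$ and $|G:P|$ is prime to $p$.

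For the equality, the easy direction goes as follows: assume there is a normal complement $K/N$ to $N\norm GP/N$ in $G/N$ with $\theta$ extending to $\hat\theta\in\irr K$. Since $\theta$ extends to $NP$, Theorem~\ref{thmB} applies, and its implication (iii)$\Rightarrow$(ii) gives that every $\chi\in\irr{N\norm GP|\theta}$ extends to $G$. In particular each $\tau\in\irrpr{N\norm GP|\theta}$ has an extension $\tilde\tau\in\irr G$, which lies over $\theta$ and satisfies $\tilde\tau(1)=\tau(1)$, hence $\tilde\tau\in\irrpr{G|\theta}$; distinct $\tau$'s produce distinct $\tilde\tau$'s, so by the count $|\irrpr{G|\theta}|=|\irrpr{N\norm GP|\theta}|$ coming from $\Omega$ the assignment $\tau\mapsto\tilde\tau$ is a bijection onto $\irrpr{G|\theta}$. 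Comparing degrees through this bijection yields the equality of the two sums.

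Conversely, suppose equality holds. Then $\Omega(\chi)(1)=\chi(1)$ for all $\chi\in\irrpr{G|\theta}$, so $\irrpr{G|\theta}$ and $\irrpr{N\norm GP|\theta}$ carry the same multiset of degrees, say $a_1<\dots<a_k$ with multiplicities $m_1,\dots,m_k$. I would then run the combinatorial argument from the proof of Proposition~\ref{pro:giannelli implies A} on these degrees to conclude that restriction $\chi\mapsto\chi_{N\norm GP}$ is a bijection $\irrpr{G|\theta}\to\irrpr{N\norm GP|\theta}$: inducting downward from $a_k$, one uses that for $\tau\in\irrpr{N\norm GP|\theta}$ the induced character $\tau^G$ has degree $|G:N\norm GP|\tau(1)$, whose $p$-part equals that of $\theta(1)$, so $\tau^G$ has an irreducible constituent $\chi\in\irrpr{G|\theta}$ with $\chi(1)\ge\tau(1)$; and that a constituent attaining the largest degree not yet handled must restrict irreducibly to the relevant character of $N\norm GP$. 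Once restriction is a bijection, every $\tau\in\irrpr{N\norm GP|\theta}$ extends to $G$, i.e.\ condition (i) of Theorem~\ref{thmB} holds; since $\theta$ extends to $NP$, Theorem~\ref{thmB} (now the implication (i)$\Rightarrow$(iii)) yields a normal complement $K/N$ to $N\norm GP/N$ in $G/N$ and an extension $\hat\theta\in\irr K$ of $\theta$, completing the proof.

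The substantive work is not in this argument: it rests on Theorem~\ref{thm:GMC reduction} (the relative McKay reduction carrying the degree inequality) and Theorem~\ref{thmB} (the relative character-restriction theorem), which I would invoke as black boxes. The single step that needs care is the combinatorial claim in the equality direction: the invariant ``$\chi(1)$ prime to $p$'' used in Proposition~\ref{pro:giannelli implies A} must be replaced by ``$\chi(1)$ and $\theta(1)$ have the same $p$-part'', so one has to track $p$-adic valuations and invoke $p\nmid|G:N\norm GP|$ at the right moments; this is routine but is the only place where the relative version is not purely formal.
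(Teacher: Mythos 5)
Your argument is correct and is essentially the paper's own (the paper gives no proof beyond the remark that the proposition follows by arguing as in Proposition~\ref{pro:giannelli implies A} and using Theorem~\ref{thm:GMC reduction}, which is precisely what you carry out, with Theorem~\ref{thmB} supplying both directions of the equality statement). The only point to phrase more carefully is your parenthetical identifying $\Irr_{p'}(\,\cdot\,|\theta)$ with $\irrpr{\,\cdot\,|\theta}$: when $p\mid\theta(1)$ these sets genuinely differ (the former may be empty), so one should first replace $(G,N,\theta)$ by an isomorphic character triple with $\theta$ linear --- degree ratios, the subgroup $N\norm GP$, the $P$-invariance, and the hypothesis on involved simple groups all transfer --- and only then apply Theorem~\ref{thm:GMC reduction}.
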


%%%%%%%%%%%%%%%%%%%%%%%%%%%%%%%%%%%%%%%
%%%%%%%%%%%%%%%%%%%%%%%%%%%%%%%%%%%%%%%
%%%%%%%%%%%%%%%%%%%%%%%%%%%%%%%%%%%%%%%
%%%%%%%%%%%%%%%%%%%%%%%%%%%%%%%%%%%%%%%

\section{Quasisimple groups}\label{sec:quasisimple}

In this section and the next, we present evidence supporting
Conjecture~\ref{conj:inductive Giannelli-McKay}. We verify the
conjecture for, among other cases, all quasisimple groups of
exceptional Lie type with respect to all primes, as well as for all
groups of Lie type defined in characteristic equal to the given
prime $p$. These results, in particular, allow us to confirm the
conjecture for all quasisimple groups when $p=2$.

In the cases mentioned, we in fact prove a slightly stronger version
of Conjecture~\ref{conj:inductive Giannelli-McKay}, stated as
follows.

\begin{conj}\label{thm:simple}
Let $S$ be a quasisimple group with cyclic center, $P\in\Syl_p(S)$
and $A=\Aut(S)_P$. Then there exists an $A$-equivariant bijection
$$\Psi:\Irr_{p'}(S)\rightarrow\Irr_{p'}(\bN_S(P))$$ such that
\[\chi(1)\geq\Psi(\chi)(1)\] and $$(S\rtimes
A_\chi,S,\chi)\geq_c(\bN_S(P)\rtimes A_\chi,\bN_S(P) , \Psi(\chi))$$
for every $\chi\in\Irr_{p'}(S)$.
\end{conj}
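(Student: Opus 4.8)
The plan is to verify Conjecture~\ref{thm:simple} (hence Conjecture~\ref{conj:inductive Giannelli-McKay}) case by case according to the possibilities for the quasisimple group $S$. The observation driving the whole argument is that for most quasisimple groups, the inductive McKay condition is already known in a form that produces an $A$-equivariant bijection $\Psi\colon\Irr_{p'}(S)\to\Irr_{p'}(\bN_S(P))$ inducing central character triple isomorphisms; what remains is the new ingredient, namely the degree inequality $\Psi(\chi)(1)\le\chi(1)$. So for each family I would take the bijection provided by the existing literature and check that it can be arranged (or is automatically) degree-decreasing.

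First I would dispose of the easy cases: if $P\in\Syl_p(S)$ is normal then $\bN_S(P)=S$ and the identity works; if $P$ is cyclic, or more generally if $|\Irr_{p'}(S)|$ is small, one can often argue directly. Next, for $S$ of Lie type in the \emph{defining} characteristic $p$, one has $\bN_S(P)=\bN_S(\bU)$ a Borel-type subgroup, and $\Irr_{p'}(S)$ consists of the semisimple characters (those of degree prime to $p$, i.e.\ the $p$-defect-zero-complement part), whose degrees are the $[G:G_s]_{p'}$-type indices and are visibly at least the degrees of the linear characters of $\bN_S(P)/\bU$; here the bijection and the central isomorphism data come from the known results on McKay in defining characteristic, and the degree inequality should follow from the fact that semisimple character degrees are indices of proper subgroups and hence $\ge 1 = $ the degree of the corresponding character of the abelian quotient of the normalizer. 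Then, for exceptional groups of Lie type in non-defining characteristic, I would invoke the already-established inductive McKay condition for these groups (Malle and collaborators), producing the equivariant bijection with central triple isomorphisms, and then verify the degree inequality using explicit knowledge of $\Irr_{p'}(S)$ and of $\bN_S(P)$ (the latter controlled by Sylow structure: for good primes $\bN_S(P)$ is close to $\bN_S(\bT)$ for a suitable torus, and $d$-Harish-Chandra theory pins down $\Irr_{p'}$ as a union of Lusztig series, each matched by a relative Weyl group count). The alternating and sporadic groups, and the few small exceptional Schur multiplier cases, would be handled by direct inspection of character tables and known Sylow normalizers.

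The main obstacle I expect is the degree inequality in non-defining characteristic for the classical-type quotient cases that the present paper does \emph{not} claim — but within the scope actually asserted (exceptional type, defining characteristic, and consequently all $p=2$), the real work is: (1) showing that the \emph{specific} bijection one is forced to use (the one compatible with the $\geq_c$ requirement and $A$-equivariance simultaneously) is the degree-decreasing one, rather than merely that \emph{some} degree-decreasing bijection exists; and (2) handling primes dividing the order of the center or the Schur multiplier, where $\Irr_{p'}(S)$ genuinely involves faithful characters and $\bN_S(P)$ interacts with $\zent S$. I would manage (1) by noting that in these families the $p'$-degrees of $S$ are "large" (bounded below by the index of a proper parabolic or by $q$ to a positive power) while $\Irr_{p'}(\bN_S(P))$ has degrees bounded above by $|P/P'|$-type quantities and the order of a relative Weyl group, and these bounds separate cleanly except in a short explicit list of small $q$ where one checks by hand. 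For (2) I would reduce to the simply connected group, use that the faithful $p'$-characters are parametrized by semisimple classes with connected centralizer and restrict the known bijection accordingly, keeping track of central characters via the $\geq_c$ relation exactly as in the statement.

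Finally, combining the defining-characteristic case (which covers $p\mid q$) with the exceptional-type case for all primes, and the alternating/sporadic cases for $p=2$, every quasisimple group satisfies Conjecture~\ref{conj:inductive Giannelli-McKay} for $p=2$; this is what Theorem~\ref{thm:p=2Giannelli} and hence Theorem~B require.
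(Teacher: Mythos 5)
Your overall strategy (take the known McKay-good, $A$-equivariant bijections from the literature and add the degree condition by comparing the smallest nontrivial $p'$-degree of $S$ with the largest $p'$-degree of $\bN_S(P)$) is indeed the paper's strategy, and your treatment of the exceptional groups in non-defining characteristic and of the sporadic/small cases matches it in outline. But there are two genuine gaps. First, your defining-characteristic argument is wrong as stated: the $p'$-degree characters of $\bN_S(P)=P\rtimes T$ are not the linear characters of an abelian quotient. They are characters of $P/P'\rtimes T$, and their degrees are sizes of $T$-orbits on $\Irr(P/P')$, which can be as large as $|T/\zent G|$. For $G=SU_n(q)$ with $n\geq 3$ odd and $(n,q+1)=1$ this bound is attained: $\bN_G(P)$ has a character of degree $|T|=(q^2-1)^{(n-1)/2}$, which exceeds the minimal nontrivial $p'$-degree $(q^n-(-1)^n)/(q+1)$ of $G$ (already for $n=3$, $q>2$), so no comparison of the form $m(G)\geq b(\bN_G(P))$ can work and the inequality is not automatic for an arbitrary equivariant bijection. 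This is an infinite family, not a short list of small $q$. The paper has to construct, for exactly this family, an $\Aut(G)$-invariant non-Weil $p'$-character of degree $>q^{n-1}$ (its Lemma on $SU_n(q)$, proved via Lusztig series and centralizers of carefully chosen semisimple elements, with a separate Frobenius-twisting argument when $q=2^f$, $f$ odd), and then build the bijection so that this character is sent to the unique regular-orbit character of $\bN_G(P)$; your proposal contains no substitute for this step.

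Second, your concluding reduction for $p=2$ does not cover all quasisimple groups: defining characteristic only handles classical groups defined over fields of characteristic $2$, and your non-defining-characteristic discussion is restricted to exceptional types, so classical groups over fields of odd characteristic at $p=2$ are left out. This is precisely where the paper does additional work (Theorem on $p=2$): it uses Kondrat'ev's theorem and the Carter--Fong description of Sylow $2$-normalizers of $GL_n^{\pm}(q)$, $Sp_{2n}(q)$, $Spin_n^{\pm}(q)$ to show that $b(\bN_G(Q))$ is small (at most $q+1$, a power of $3$, or the Sylow $2$-subgroup is self-normalizing) and then compares with the known minimal degrees. Without this case your claim that every quasisimple group satisfies the conjecture for $p=2$ does not follow from the cases you treat. (A minor further point: the statement you are proving is stated as a conjecture in the paper and is only verified there in the cases above, so a complete proof in full generality should not be claimed in any event.)
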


The existence of an $A$-equivariant bijection $\Psi$ from
$\Irr_{p'}(S)$ to $\Irr_{p'}(\bN_S(P))$ satisfying the so-called
\emph{central isomorphism} $(S\rtimes
A_\chi,S,\chi)\geq_c(\bN_S(P)\rtimes A_\chi,\bN_S(P) , \Psi(\chi))$
(of character triples) has been established in several papers and
was completed in \cite{CS} by Cabanes and Sp\"{a}th. In fact, by
\cite[Theorem~B]{CS}, we now know that such bijection, which we
shall refer to as a \emph{McKay-good} bijection, exists for all
finite groups. It is the extra \emph{degree condition}
$\Psi(\chi)(1)\leq\chi(1)$ that we need to consider in this paper.

Following the literature, we say that a quasisimple group $S$
satisfies the \emph{inductive McKay condition} if it satisfies
Conjecture~\ref{conj:inductive Giannelli-McKay}, possibly excluding
the degree condition. Note that there are other equivalent
formulations of the inductive McKay conditions; see, for example,
\cite[\S 10]{IMN07}. In some cases, we will work with this version
of the inductive condition. We also note that if
Conjecture~\ref{thm:simple} holds for a quasisimple group $S$ (with
or without cyclic center), then it also holds for every quotient of
$S$ by a central subgroup. For further discussion, we refer the
reader to \cite[Remark~2.9]{CS}.

\begin{notation} Let $G$ be a finite group.
\begin{enumerate}[\rm(i)]
\item $d(G):=\min\{\chi(1):\chi\in\Irr(G), \chi(1)>1\}$ is the smallest nontrivial
degree of a (complex) irreducible character of $G$. By convention,
$d(G)=1$ if $G$ is abelian.
\item $m_p(G):=\min\{\chi(1):\chi\in\Irr_{p'}(G), \chi(1)>1\}$ is the smallest nontrivial
degree of an irreducible $p'$-character of $G$. By convention,
$m_p(G)=1$ if $G$ has no non-linear $p'$-degree irreducible
character.
\item $b_p(G):=\max\{\chi(1):\chi\in\Irr_{p'}(G)\}$ is the largest
degree of an irreducible $p'$-character of $G$.
\item We will use $m(G)$ and $b(G)$, respectively, for $m_p(G)$ and
$b_p(G)$, whenever $p$ is implicitly known or the presence of $p$ is
not important.
\end{enumerate}
\end{notation}

\begin{hypothesis}
\label{hypo} $m(S)\geq b(\bN_S(P))$, where $S$ is a quasisimple
group, $p$ is a prime, and $P\in\Syl_p(S)$.
\end{hypothesis}

\begin{pro}\label{pro:key}
Fix a quasisimple group $S$ and a prime $p$. If
Hypothesis~\ref{hypo}, or the stronger condition $d(S)\geq
b(\bN_S(P))$, is true for $S$, then so is
Conjecture~\ref{thm:simple} for $S$.
\end{pro}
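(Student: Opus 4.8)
The plan is to build on the McKay-good bijection supplied by \cite[Theorem~B]{CS} and to use Hypothesis~\ref{hypo} only to force the degree condition $\Psi(\chi)(1)\le\chi(1)$ to hold automatically on all characters except the trivial one, which will then be corrected by a single transposition. Write $H=\bN_S(P)$. We may assume $P$ is not central in $S$, as otherwise $H=S$ and $\Psi=\mathrm{id}$ does the job; under this assumption $\Irr_{p'}(S)$ contains a nonlinear character, so $m(S)\ge d(S)$ and the displayed stronger hypothesis $d(S)\ge b(H)$ indeed implies Hypothesis~\ref{hypo}. By \cite[Theorem~B]{CS}, fix an $A$-equivariant bijection $\Omega\colon\Irr_{p'}(S)\to\Irr_{p'}(H)$ with $(S\rtimes A_\chi,S,\chi)\geq_c(H\rtimes A_\chi,H,\Omega(\chi))$ for every $\chi$; the aim is to promote $\Omega$ to a bijection $\Psi$ with these two properties \emph{and} the degree inequality.

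Since $S$ is perfect, $1_S$ is the only linear character in $\Irr_{p'}(S)$, so every $\chi\in\Irr_{p'}(S)$ with $\chi\ne 1_S$ satisfies $\chi(1)\ge m(S)\ge b(H)\ge\Omega(\chi)(1)$ by Hypothesis~\ref{hypo}. Hence $\Omega$ already meets the degree condition at every $\chi\ne 1_S$, and the problem reduces to arranging that the image of $1_S$ has degree $1$. If $\Omega(1_S)=1_H$, take $\Psi=\Omega$. Otherwise set $\psi=\Omega(1_S)$ and $\chi_0=\Omega^{-1}(1_H)$; then $\chi_0\ne 1_S$, so $\chi_0$ is nonlinear and $\chi_0(1)\ge m(S)\ge b(H)\ge\psi(1)$. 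Define $\Psi$ to agree with $\Omega$ except that $\Psi(1_S)=1_H$ and $\Psi(\chi_0)=\psi$. As $1_S$, $1_H$, $\chi_0$ and $\psi$ are all $A$-invariant, $\Psi$ is still $A$-equivariant, and by the above it satisfies $\Psi(\chi)(1)\le\chi(1)$ for every $\chi$.

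The remaining point --- which I expect to be the only real obstacle --- is to check the central isomorphism for $\Psi$ at $1_S$ and at $\chi_0$ (elsewhere it is inherited verbatim from $\Omega$). At $1_S$ one compares the trivial character triples $(S\rtimes A,S,1_S)$ and $(H\rtimes A,H,1_H)$, which is routine. At $\chi_0$ one must deduce $(S\rtimes A,S,\chi_0)\geq_c(H\rtimes A,H,\psi)$ from the two relations $\Omega$ gives us, namely $(S\rtimes A,S,\chi_0)\geq_c(H\rtimes A,H,1_H)$ and $(S\rtimes A,S,1_S)\geq_c(H\rtimes A,H,\psi)$; the natural route is to note that $\psi=\Omega(1_S)$ restricts trivially to the relevant common central subgroup (because $1_S$ does and $\geq_c$ preserves underlying central characters) and extends to $H\rtimes A$ (being $\geq_c$-equivalent to a trivial triple), so that $(H\rtimes A,H,\psi)$ is centrally isomorphic to $(H\rtimes A,H,1_H)$, and then to invoke transitivity of $\geq_c$. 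Making this precise requires unwinding \cite[Definition~10.14]{N}, in particular identifying the central subgroup common to the triples involved; everything else is bookkeeping with degrees and $A$-orbits. (The transposition can be avoided entirely whenever the bijection of \cite{CS} is already known to fix $1_S$ --- for instance when $H$ has no nonlinear $A$-invariant $p'$-character, which includes the case $b(H)=1$.)
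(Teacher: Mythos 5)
Your skeleton is the same as the paper's: Hypothesis~\ref{hypo} makes the degree condition automatic at every nonlinear character of the perfect group $S$, so everything reduces to having a McKay-good bijection sending $1_S$ to $1_{\bN_S(P)}$ (the paper's proof consists of exactly this observation and leaves the normalization of the trivial character to the cited bijections, while you repair it with a transposition). The transposition, the degree estimates and the $A$-equivariance are fine. The problem is the step you yourself single out, and the route you sketch for it does not work as stated. Writing $H=\bN_S(P)$, you want $(S\rtimes A,S,\chi_0)\geq_c(H\rtimes A,H,\psi)$ and propose to get it from $(H\rtimes A,H,1_H)\geq_c(H\rtimes A,H,\psi)$ plus transitivity. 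But for a relation between triples over the \emph{same} pair $(H\rtimes A,H)$, the scalar condition of \cite[Definition~10.14]{N} must hold on all of $\bC_{H\rtimes A}(H)$, which contains $\zent{H}$; the only control you have on $\psi=\Omega(1_S)$ is that its scalars are trivial on $\bC_{S\rtimes A}(S)$, whose intersection with $H$ is just $\zent S$. Since $\zent{\bN_S(P)}$ is in general much larger than $\zent S$ (it contains $\zent P$ when $P$ is self-normalizing, say), $\psi$ may be nontrivial on $\zent H$, in which case no projective representation associated with $\psi$ can match the trivial scalars of $1_H$ on $\bC_{H\rtimes A}(H)$; the intermediate central isomorphism then fails and the appeal to transitivity collapses.

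The relation you need is nevertheless true, and the fix is to verify it directly from the two relations you already have, not through an intermediate triple. Let $(\mathcal Q,f)$ be projective representations witnessing $(S\rtimes A,S,\chi_0)\geq_c(H\rtimes A,H,1_H)$ and $(g,\mathcal R')$ ones witnessing $(S\rtimes A,S,1_S)\geq_c(H\rtimes A,H,\psi)$; here $f$ and $g$ are $1$-dimensional, being associated with $1_H$ and $1_S$, and the two relations give the scalar identifications $\mathcal Q(c)=f(c)\,\mathrm{Id}$ and $\mathcal R'(c)=g(c)\,\mathrm{Id}$ for $c\in\bC_{S\rtimes A}(S)$. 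Then $\mathcal Q\otimes g$ is a projective representation of $S\rtimes A$ associated with $\chi_0$, and $\mathcal R'\otimes f$ is one of $H\rtimes A$ associated with $\psi$; their factor sets are $\alpha\beta$ and its restriction to $H\rtimes A$ (where $\alpha,\beta$ are the factor sets of $\mathcal Q$ and $g$), and on $\bC_{S\rtimes A}(S)$ both act by the scalar $f(c)g(c)$. This gives $(S\rtimes A,S,\chi_0)\geq_c(H\rtimes A,H,\psi)$ directly, with only the smaller centralizer $\bC_{S\rtimes A}(S)$ intervening, and your transposition argument then goes through. (The pair $(1_S,1_H)$ is, as you say, immediate; and your unproved remark that a noncentral $P$ forces a nonlinear member of $\Irr_{p'}(S)$ is easily justified, e.g.\ by applying It\^o's theorem in $\bN_S(P)/P'$.)
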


\begin{proof}
If $m(S)\geq b(\bN_S(P))$, then any bijection $\Psi$ from
$\Irr_{p'}(S)$ to $\Irr_{p'}(\bN_S(P))$ sending $1_S$ to
$1_{\bN_S(P)}$ automatically satisfies the degree condition
$\Psi(\chi)(1)\leq\chi(1)$ for every $\chi\in \Irr_{p'}(S)$.
\end{proof}

Hypothesis~\ref{hypo}, unfortunately, fails quite often. For
instance, in general, it fails when $S$ is a cover of an alternating
or a simple classical group (in characteristic not equal to $p$). It
also fails for certain groups of Lie type in characteristic $p$ (see
the proof of Proposition~\ref{prop:defining-char}).

%%%%%%%%%%%%%%%%%%%%%%%%%%%%%%%%%
%%%%%%%%%%%%%%%%%%%%%%%%%%%%%%%%%

%%%%%%%%%%%%%%%%%%%%%%%%%%%%%%%%%%
%%%%%%%%%%%%%%%%%%%%%%%%%%%%%%%%%%

\subsection{Groups of Lie type in characteristic $p$}

The failure of Hypothesis~\ref{hypo} when $S$ is a group of Lie type
in characteristic $p$ arises in the case of unitary groups. This
case requires additional work. Our notation for finite simple groups
(and related ones) follows \cite{Carter85,Atlas}.

Low-degree irreducible representations of the special unitary groups
$SU_n(q)$ ($q=p^f$ is a power of a prime $p$ and $n\geq 3$,
excluding $(n,q)=(3,2)$) are studied in \cite[\S4]{TZ96} and
\cite[\S6.1]{LOST10}. Among these are the so-called irreducible Weil
characters, denoted by $\zeta^i_{n,q}$ for $0\leq i\leq q$. The
characters $\zeta^i_{n,q}$ with $i>0$ have degree
$(q^n-(-1)^n)/(q+1)$, while $\zeta^0_{n,q}$ has degree
$(q^n+q(-1)^n)/(q+1)$. In fact, these $\zeta^i_{n,q}$ account for
all nontrivial characters of $SU_n(q)$ with degree at most
$d(SU_n(q))+1$. Consequently, if $p$ is the defining characteristic
of the group, we have
\[m_p(SU_n(q))=(q^n-(-1)^n)/(q+1).\] For our purposes, we
need to construct a non-Weil character of the unitary groups that is
invariant under the automorphism groups.

To do so, we first describe the Weil characters of $G:=SU_n(q)$ via
the notion of Lusztig series and semisimple characters, as follows.
The set $\Irr(G)$ of irreducible characters of $G$ is partitioned
into the Lusztig series $\mathcal{E}(G,s)$, where $s$ runs over a
complete set of representatives of conjugacy classes of semisimple
elements of $G^*:=PGU_n(q)$ (see \cite[Theorem~2.6.2]{GM20}). Each
$\mathcal{E}(G,s)$ contains one or more special members called
semisimple characters whose degrees are equal to
$|G^*:\bC_{G^*}(s)|_{p'}$ (see \cite[Definition~2.6.9]{GM20}).

Centralizers of semisimple elements in classical groups are well
known (see \cite{Carter81,Fra20} for instance). We recall here the
needed result for $GU_n(q)$.

\begin{lem}
\label{lem:cen} Let $G=GU_n(q)$ and $s\in G$ be a semisimple
element. For a monic polynomial
$g(t)=t^d+a_{d-1}t^{d-1}+\cdots+a_1t+a_0\in \FF_{q^2}(t)$ not equal
to $t$, write
$g^*(t):=t^d+(a_1/a_0)^qt^{d-1}+\cdots+(a_{d-1}/a_0)^qt+(1/a_0)^q$.
Let $f(t)$ be the characteristic polynomial of $s$ and assume that
its decomposition into irreducible monic polynomials over
$\FF_{q^2}$ is
\[
f(t)=\pm\prod_{i=1}^a f_i(t)^{n_i} \times \prod_{j=1}^b
(g_j(t)g_j^*(t))^{m_j},
\]
where $f_i=f_i^*$ and $g_j\neq g_j^*$ for every $i$ and $j$ and they
are pairwise distint. Let $d_i:=\deg(f_i)$ and $e_j:=\deg(g_j)$.
Then
\[
\bC_G(s)\cong \prod_{i=1}^a GU_{n_i}(q^{d_i})\times \prod_{j=1}^b
GL_{m_j}(q^{2e_j}).
\]
\end{lem}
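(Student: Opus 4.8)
The statement is classical and can be extracted from \cite{Carter81} or \cite{Fra20}; the plan is to give a short self-contained argument via primary decomposition. Realise $G=GU_n(q)$ as the group of isometries of a non-degenerate Hermitian form $\langle\,\cdot\,,\,\cdot\,\rangle$ on $V=\FF_{q^2}^{\,n}$ taken with respect to the involutory automorphism $x\mapsto\bar x=x^q$ of $\FF_{q^2}$, and for $g\in GL(V)$ write $g^{\dagger}$ for the adjoint, so that $G=\{g: gg^{\dagger}=1\}$. A direct computation with adjoints shows that the characteristic polynomial of $(g^{\dagger})^{-1}$ equals $f^{*}$ whenever that of $g$ equals $f$; since $s^{\dagger}=s^{-1}$, applying this to $s$ gives $f=f^{*}$, which is precisely the form of the factorisation in the statement and accounts for the $*$-operation. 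One also records that a self-dual monic irreducible $f_i\in\FF_{q^2}[t]$ necessarily has \emph{odd} degree $d_i$: its set of roots is a single $\langle\beta\mapsto\beta^{q^2}\rangle$-orbit closed under $\beta\mapsto\beta^{-q}$, and the square of the latter map is a generator of the cyclic Galois group of order $d_i$, so $2$ is invertible modulo $d_i$.

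Next I would form the $s$-primary decomposition $V=\bigoplus_i V_i\ \oplus\ \bigoplus_j(W_j\oplus W_j^{*})$, where $V_i=\ker f_i(s)$, $W_j=\ker g_j(s)$ and $W_j^{*}=\ker g_j^{*}(s)$ (using that $s$ is semisimple). Since $\langle sv,sw\rangle=\langle v,w\rangle$, the form puts the $f$-primary component of $s$ in perfect duality with its $f^{*}$-primary component and annihilates every other pair of components; hence each $V_i$ carries a non-degenerate restriction of $\langle\,\cdot\,,\,\cdot\,\rangle$, each $W_j$ and each $W_j^{*}$ is totally isotropic, and $\langle\,\cdot\,,\,\cdot\,\rangle$ induces a perfect pairing $W_j\times W_j^{*}\to\FF_{q^2}$. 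Every element of $\bC_G(s)$ stabilises each of these canonically defined subspaces.

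It then remains to identify the contribution of each block. As an $\FF_{q^2}[s]$-module, $V_i\cong(\FF_{q^2}[t]/f_i)^{\oplus n_i}\cong\FF_{q^{2d_i}}^{\,n_i}$, so the $s$-commuting endomorphisms of $V_i$ form $M_{n_i}(\FF_{q^{2d_i}})$; a routine transfer-of-structure argument identifies the $s$-commuting isometries of $\langle\,\cdot\,,\,\cdot\,\rangle|_{V_i}$ with the unitary group of a non-degenerate Hermitian form over $\FF_{q^{2d_i}}$ relative to the order-$2$ automorphism $y\mapsto y^{q^{d_i}}$ (the one induced by $s\mapsto s^{-1}$), i.e.\ with $GU_{n_i}(q^{d_i})$. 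Likewise $W_j\cong\FF_{q^{2e_j}}^{\,m_j}$ as an $\FF_{q^2}[s]$-module; an element of $\bC_G(s)$ may act on $W_j$ through an arbitrary element of $GL_{m_j}(q^{2e_j})$, and its action on $W_j^{*}$ is then determined by the perfect pairing, so this block contributes a factor $GL_{m_j}(q^{2e_j})$. Taking the direct product over all $i$ and $j$ gives $\bC_G(s)\cong\prod_i GU_{n_i}(q^{d_i})\times\prod_j GL_{m_j}(q^{2e_j})$.

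The one step that is not bookkeeping, and which I would write out carefully, is the determination of the isometry type of $\langle\,\cdot\,,\,\cdot\,\rangle|_{V_i}$ after restricting the coefficient ring to $\FF_{q^{2d_i}}$: one must check that the resulting form is again Hermitian with respect to the correct order-$2$ involution (rather than, say, a non-degenerate symmetric or alternating bilinear form), and this is exactly where the oddness of $d_i$ — forced by $f_i=f_i^{*}$ — enters.
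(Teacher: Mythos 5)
Your argument is correct, but note that the paper does not prove this lemma at all: it is quoted as a known result with references to Carter's paper on centralizers of semisimple elements and De Franceschi's thesis, so any comparison is with the classical proofs in those sources rather than with an argument in the text. What you write is essentially that classical proof: the $s$-primary decomposition $V=\bigoplus_i V_i\oplus\bigoplus_j(W_j\oplus W_j^{*})$, the observation that the Hermitian form pairs the $g$-primary and $g^{*}$-primary pieces and is nondegenerate on each $V_i$, and the transfer of structure to $\FF_{q^{2d_i}}$, respectively $\FF_{q^{2e_j}}$. Your identification of the $*$-operation with $\beta\mapsto\beta^{-q}$ on roots, your parity argument ($\tau$ centralizes the regular cyclic action of $\beta\mapsto\beta^{q^2}$ on the roots, hence lies in it, and $\tau^2$ generating forces $d_i$ odd), and your remark that the oddness of $d_i$ is exactly what makes the induced involution on $\FF_{q^{2d_i}}$ restrict to $x\mapsto x^q$ on $\FF_{q^2}$ (so that the restricted form is Hermitian over the extension, matching the remark following the lemma in the paper) are all accurate and are the genuine content. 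The only part you leave as ``routine'' is in fact the crux you yourself flag: one should exhibit the $\FF_{q^{2d_i}}$-sesquilinear form $H$ on $V_i$ (e.g.\ via $\langle v,w\rangle=\mathrm{Tr}_{\FF_{q^{2d_i}}/\FF_{q^2}}(H(v,w))$, using nondegeneracy of the trace), check it is Hermitian for $y\mapsto y^{q^{d_i}}$ and nondegenerate, and verify surjectivity, i.e.\ that every $H$-isometry of $V_i$ and every element of $GL_{m_j}(q^{2e_j})$ on $W_j$ (extended to $W_j^{*}$ by the contragredient) really occurs in $\bC_G(s)$, so that the centralizer is the full product and not a proper subgroup. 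Writing that out would make your proof a complete, self-contained substitute for the citation; as it stands it is a faithful sketch of the standard argument with the delicate points correctly located.
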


\begin{remark}
Irreducible factors of the characteristic polynomial $f(t)$ of a
semisimple element in $GU_n(q)$ are subject to certain restrictions.
For example, a polynomial $g$ is a factor if and only if $g^*$ is
also a factor. Additionally, any factor $g$ satisfying $g=g^*$ must
have odd degree.
\end{remark}

Let $\delta$ be a generator of the multiplicative group
$\FF_{q^2}^\times$. For $1\leq i\leq q$, consider the semisimple
element $s_i\in G^*$ to be the image of the diagonal matrix
\[\widetilde{s}_i:=\diag(\delta^{i(q-1)},1^{n-1})\in
\widetilde{G}:=GU_n(q)\] under the natural projection
$\pi:\widetilde{G}\to G^*$. For general $s$, $\mathcal{E}(G,s)$ may
contain more than one semisimple character, but we claim that, each
$\mathcal{E}(G,s_i)$ contains a unique one.

%This is done by embedding $SU_n(q)$ into $\widetilde{G}$. (See
%\cite[\S1.7]{GM20} for the background on this so-called regular
%embedding.)

Note that $\widetilde{G}$ is self-dual (in the sense of
\cite[\S4.3]{Carter85}) and we may identify $\widetilde{G}$ with its
dual group. As the center of the ambient algebraic group of
$\widetilde{G}$ is connected, the series
$\mathcal{E}(\widetilde{G},\widetilde{s}_i)$ contains a unique
semisimple character (\cite[p.~171]{GM20}. We denote this character
by $\psi_i$. Then
$\psi_i(1)=|\widetilde{G}:\bC_{\widetilde{G}}(\widetilde{s}_i)|_{p'}$.
By Lemma~\ref{lem:cen},
\[
\bC_{\widetilde{G}}(\widetilde{s}_i) \cong GU_{n-1}(q)\times
GU_1(q).
\]
The choice of eigenvalues of $s_i$'s implies that
$\bC_{G^*}(s_i)=\bC_{\widetilde{G}}(\widetilde{s}_i)/\bZ(\widetilde{G})$.
(To see this, let $C:=\pi^{-1}(\bC_{G^*}(s_i))$. Then the mapping
$\tau: C\to \bZ(\widetilde{G})$ defined by $gsg^{-1}=\tau(g)s$ is a
homomorphism with $\Ker(\tau)=\bC_{\widetilde{G}}(\widetilde{s}_i)$.
However, the fact that $s$ and $gsg^{-1}$ have the same eigenvalues
forces $\tau(g)=1$ for every $g\in C$.) It follows that
\[
|G^*:\bC_{G^*}(s_i)|_{p'}=|\widetilde{G}:\bC_{\widetilde{G}}(\widetilde{s}_i)|_{p'}=\frac{q^n-(-1)^n}{q+1},
\]
so that semisimple characters in $\mathcal{E}(G,s_i)$ have the same
degree as the (only) semisimple character $\psi_i$ in
$\mathcal{E}(\widetilde{G},\widetilde{s}_i)$. By
\cite[Corollary~2.6.18]{GM20}, semisimple characters in
$\mathcal{E}(G,s_i)$ are irreducible constituents of the restriction
of $\psi_i$ from $\widetilde{G}$ to $G^*$. We deduce that
$\mathcal{E}(G,s_i)$ has a unique semisimple character, as claimed.
This is the Weil character $\zeta^i_{n,q}$ mentioned above.

\begin{lem}
\label{lem:Weil-character} Assume the above notation. Then a Weil
character $\zeta^i_{n,q}$ for some $1\leq i\leq q$ is invariant
under all field automorphisms of $G$ if and only if $i(p-1)$ is
divisible by $q+1$. In particular, if $p=2$, then every
$\zeta^i_{n,q}$ is moved by some automorphism of $G$.
\end{lem}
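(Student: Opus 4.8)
The plan is to determine the action of field automorphisms on $\zeta^i_{n,q}$ by tracking the associated semisimple classes. Keep the notation above, write $G=SU_n(q)$ and $G^*=PGU_n(q)$, and set $\omega:=\delta^{q-1}$, a generator of the group of $(q+1)$-th roots of unity in $\FF_{q^2}^\times$, so that $\widetilde{s}_i=\diag(\omega^i,1^{n-1})$ and $s_i=\pi(\widetilde{s}_i)$. The group of field automorphisms of $G$ is generated by the Frobenius map $\phi\colon x\mapsto(x_{jk}^p)$ raising every matrix entry to its $p$-th power; hence $\zeta^i_{n,q}$ is invariant under every field automorphism of $G$ if and only if it is fixed by $\phi$. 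Recall from the discussion preceding the statement that $\zeta^i_{n,q}$ is the unique semisimple character in the Lusztig series $\mathcal{E}(G,s_i)$, and that the $\mathcal{E}(G,s)$ partition $\Irr(G)$.

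I would then invoke the equivariance of Deligne--Lusztig theory under field automorphisms (see \cite{GM20}): $\phi$ permutes the semisimple characters of $G$ and maps $\mathcal{E}(G,s)$ onto $\mathcal{E}(G,\phi^*(s))$, where $\phi^*$ is the field automorphism of $G^*=PGU_n(q)$ that likewise raises matrix entries to the $p$-th power. Therefore $(\zeta^i_{n,q})^\phi$ is the unique semisimple character of $\mathcal{E}(G,\phi^*(s_i))$, with $\phi^*(s_i)$ the image in $G^*$ of $\diag(\omega^{ip},1^{n-1})$; by disjointness of distinct Lusztig series, $(\zeta^i_{n,q})^\phi=\zeta^i_{n,q}$ if and only if $\phi^*(s_i)$ is $G^*$-conjugate to $s_i$. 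To decide this, note that semisimple elements of $GU_n(q)$ have connected centralizers by Lemma~\ref{lem:cen}, so two of them are $GU_n(q)$-conjugate exactly when they have equal characteristic polynomials; passing to $PGU_n(q)$, the images of $\diag(\omega^{ip},1^{n-1})$ and $\diag(\omega^i,1^{n-1})$ are conjugate if and only if $\diag(\omega^i,1^{n-1})$ is $GU_n(q)$-conjugate to $\mu\cdot\diag(\omega^{ip},1^{n-1})=\diag(\mu\omega^{ip},\mu^{n-1})$ for some $(q+1)$-th root of unity $\mu$. Comparing multisets of eigenvalues and using that $1$ and $\mu$ each occur there with multiplicity $n-1\geq 2$ (note $\omega^i\neq 1$ and $\omega^{ip}\neq 1$ since $1\leq i\leq q$ and $\gcd(p,q+1)=1$), we are forced to $\mu=1$, and then the remaining eigenvalue gives $\omega^{ip}=\omega^i$, i.e.\ $(q+1)\mid i(p-1)$. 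This is precisely where the hypothesis $n\geq 3$ enters, and it is essential (for $n=2$ the class $s_{-i}$ would also be conjugate to $s_i$).

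This proves the first assertion; the ``in particular'' follows because for $p=2$ the condition $(q+1)\mid i(p-1)$ reads $(q+1)\mid i$, impossible for $1\leq i\leq q$, so the field automorphism $\phi$ --- an automorphism of $G$ --- moves every $\zeta^i_{n,q}$. The only ingredient beyond elementary linear algebra over $\FF_{q^2}$ is the field-automorphism equivariance used in the second paragraph, namely that $\phi$ permutes the semisimple characters of $G$ according to the natural $p$-power action on semisimple classes of the dual group, with the correct identification of duality. That is the step I would be most careful to pin down; granting it, the conjugacy computation above finishes the proof.
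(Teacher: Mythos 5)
Your proposal is correct and follows essentially the same route as the paper: the paper's (one-line) proof likewise reduces invariance under the generating $p$-power field automorphism to the condition $\delta^{i(q-1)}=\delta^{ip(q-1)}$, i.e.\ $(q+1)\mid i(p-1)$, relying implicitly on the labeling of the $\zeta^i_{n,q}$ by the semisimple classes of $s_i$ in $G^*=PGU_n(q)$ and the equivariance of Lusztig series under automorphisms. Your write-up merely makes explicit what the paper leaves implicit, in particular the check that the central twist by a scalar $\mu$ in passing to $PGU_n(q)$ is ruled out by the eigenvalue multiplicities when $n\geq 3$.
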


\begin{proof}
The character $\zeta^i_{n,q}$ is invariant under all field
automorphisms of $G$ if and only if
$\delta^{i(q-1)}=\delta^{ip(q-1)}$, which means that $i(q-1)\equiv
ip(q-1) (\bmod\,q^2-1)$, and the first statement follows. When
$p=2$, we then have that each $\zeta^i_{n,q}$ (for $1\leq i\leq q$)
is moved by some field automorphisms of $G$, as desired.
\end{proof}

\begin{lem}
\label{lem:SU} Let $G=SU_n(q)$ where $n\geq 3$ is odd, $(n,q+1)=1$,
and $q=p^f$ for some prime $p$. Then $G$ possesses an irreducible
character $\chi$ of degree not divisible by $p$ such that $\chi$ is
$\Aut(G)$-invariant and $\chi(1)>q^{n-1}$.
\end{lem}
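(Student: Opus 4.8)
The plan is to realise the desired character of $SU_n(q)$ as the restriction of a semisimple character of $GU_n(q)$ attached to a carefully chosen semisimple class that is invariant under the field automorphisms. First I would reduce to $GU_n(q)$: since $(n,q+1)=1$, the subgroup of scalars $\bZ:=\bZ(GU_n(q))$, which is cyclic of order $q+1$, intersects $SU_n(q)$ trivially, so a comparison of orders gives $GU_n(q)=SU_n(q)\times\bZ$. Consequently every $\psi\in\Irr(GU_n(q))$ restricts irreducibly to $SU_n(q)$ with the same degree, and the field automorphism $\sigma\colon x\mapsto x^p$ of $GU_n(q)$ stabilises both direct factors and acts on the product factorwise. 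Since $(n,q+1)=1$ also forces $SU_n(q)=PSU_n(q)$, whose outer automorphism group is generated by field automorphisms, it suffices to produce $\psi\in\Irr(GU_n(q))$ of $p'$-degree, invariant under $\sigma$ (hence under all field automorphisms), with $\psi(1)>q^{n-1}$; then $\chi:=\psi|_{SU_n(q)}$ does the job.

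Next I would pass to semisimple characters. As the ambient algebraic group of $GU_n(q)$ has connected centre, each Lusztig series $\mathcal{E}(GU_n(q),s)$ contains a unique semisimple character $\psi_s$, and $\psi_s(1)=|GU_n(q):\bC_{GU_n(q)}(s)|_{p'}$ is automatically prime to $p$. Using the self-duality of $GU_n(q)$ and the compatibility of field automorphisms with Jordan decomposition, $\sigma$ sends $\psi_s$ to $\psi_{s^{(p)}}$, where $s^{(p)}$ is obtained from $s$ by raising all of its eigenvalues to the $p$-th power; so $\psi_s$ is $\sigma$-invariant as soon as the eigenvalue multiset of $s$ is closed under $\lambda\mapsto\lambda^p$. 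The problem thus becomes: exhibit a semisimple $s\in GU_n(q)$ whose eigenvalue multiset is closed under $\lambda\mapsto\lambda^p$ and satisfies $|GU_n(q):\bC_{GU_n(q)}(s)|_{p'}>q^{n-1}$.

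For the construction I would take $\alpha\in\FF_{q^2}^{\times}$ of order exactly $p+1$ (possible since $p+1\mid q^2-1$) and let $s$ be the semisimple element of $GU_n(q)$ with eigenvalue multiset $\{1\}^{n-2}\sqcup\{\alpha,\alpha^p\}$. Using $p^2\equiv 1\pmod{p+1}$ one checks that this multiset is closed under $\lambda\mapsto\lambda^p$, and it is closed under $\lambda\mapsto\lambda^{-q}$ (so $s$ really lies in $GU_n(q)$): when $f$ is odd the values $1,\alpha,\alpha^p$ are pairwise distinct norm-one elements, while when $f$ is even one has $\alpha^p=\alpha^{-q}$, so $\{\alpha,\alpha^p\}$ is a conjugate pair $\{g,g^{*}\}$ of linear polynomials. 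Lemma~\ref{lem:cen} then yields
\[\bC_{GU_n(q)}(s)\cong\begin{cases}GU_{n-2}(q)\times GU_1(q)\times GU_1(q),&f\ \text{odd},\\ GU_{n-2}(q)\times GL_1(q^2),&f\ \text{even},\end{cases}\]
so that $\psi_s(1)$ equals $(q^{n-1}-1)(q^n+1)/(q+1)^2$ when $f$ is odd and $(q^{n-1}-1)(q^n+1)/(q^2-1)$ when $f$ is even. In both cases $\psi_s(1)>q^{n-1}$: for even $f$ this is immediate since $q^2-1\mid q^{n-1}-1$; for odd $f$ it reduces when $n=3$ to $(q-1)(q^2-q+1)>q^2$, true for the admissible values $q\geq 3$, and is clear when $n\geq 5$. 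Note that $\psi_s$ is not a Weil character, as its centraliser is not $GU_{n-1}(q)\times GU_1(q)$.

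The step I expect to be hardest is the choice of $s$: invariance under $\sigma$ forces the eigenvalues of $s$ to be ``close to rational'', which tends to enlarge $\bC_{GU_n(q)}(s)$ — a regular semisimple element generating a Coxeter torus, for instance, is never $\sigma$-fixed — whereas the bound $\psi_s(1)>q^{n-1}$ needs this centraliser to stay small. Reconciling these requirements uniformly in $p$ and $f$, in particular observing that an element of order exactly $p+1$ threads the needle in all cases and that for even $f$ this eigenvalue must be non-norm-one so that $\{\alpha,\alpha^p\}$ forms a conjugate pair rather than a pair of fixed points, is the heart of the argument — together with the routine numerical check of $\psi_s(1)>q^{n-1}$ in the small cases.
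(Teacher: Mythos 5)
Your proof is correct, but it takes a genuinely different route from the paper's. The paper works inside $PSU_n(q)\cong PGU_n(q)$, invokes Brunat's result that the $p'$-degree characters are exactly the semisimple ones, and then argues in three cases: for $q$ odd it uses the semisimple class with spectrum $\{1^{n-2},-1,-1\}$; for $q=2^f$ with $f$ even it uses the two primitive cube roots of unity; and for $q=2^f$ with $f$ odd it abandons explicit elements and runs a counting argument (Brunat's parametrization relative to $F_0$, the $2^{n-1}$ odd-degree characters of $PGU_n(2)$, orbit counting under the generating field automorphism, plus the Tiep--Zalesskii low-degree bounds) to locate an $\Out$-invariant non-Weil character. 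You instead exploit the decomposition $GU_n(q)=SU_n(q)\times \bZ(GU_n(q))$, valid precisely because $(n,q+1)=1$, so every character of $GU_n(q)$ restricts irreducibly and $\Aut$-invariance reduces to invariance under the $p$-power field automorphism of $GU_n(q)$; then one uniform choice of semisimple label, spectrum $\{1^{n-2},\alpha,\alpha^p\}$ with $\alpha$ of order $p+1$, works for all $p$ and $f$, the parity of $f$ only deciding whether the centralizer is $GU_{n-2}(q)\times GU_1(q)^2$ or $GU_{n-2}(q)\times GL_1(q^2)$ via Lemma~\ref{lem:cen}, and the degree estimates are elementary. Your approach buys uniformity (in particular it eliminates the counting argument in the hardest case $q=2^f$, $f$ odd), and it also handles $n=3$ with $q$ odd cleanly: your character has degree $(q-1)(q^2-q+1)>q^2$, whereas the paper's spectrum $\{1,-1,-1\}$ yields only $(q^3+1)/(q+1)=q^2-q+1<q^2=q^{n-1}$, which does not meet the stated bound in that case. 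Two small points to nail down in a final write-up: cite \cite[Proposition~7.2]{Taylor18} (as the paper does) for the fact that automorphisms permute Lusztig series according to their action on the semisimple label, and note that the dual of the $p$-power field automorphism may act on labels by $\lambda\mapsto\lambda^{p}$ or by its inverse depending on conventions -- your eigenvalue multiset is stable under both, so invariance is unaffected; and record explicitly that the one numerically problematic value $(n,q)=(3,2)$ in your odd-$f$ estimate is excluded by the hypothesis $(n,q+1)=1$, as you implicitly use.
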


\begin{proof}
By the hypothesis, $G$ is simple and self-dual. By
\cite[Lemma~5]{Brunat09}, $p'$-degree irreducible characters of $G$
are precisely the semisimple characters of $G$, which in turn can be
labeled by conjugacy classes of semisimple elements of
$G^*:=PGU_n(q)\cong G$.

It is well known that $\Aut(G)$ permutes the Lusztig series of $G$.
In our situation, by identifying the automorphisms of $G$ with the
corresponding automorphisms of $G^*$ under the natural isomorphism,
every $\varphi\in\Aut(G)$ maps $\mathcal{E}(G,s)$ to
$\mathcal{E}(G,\varphi(s))$ (see \cite[Proposition~7.2]{Taylor18}).
Since each $\mathcal{E}(G,s)$ contains a unique semisimple character
(using again \cite[p.~171]{GM20} and the fact that the ambient
algebraic group of $PGU_n(q)$ has connected center), of degree
$|G^*:\bC_{G^*}(s)|_{p'}$, it follows that a $p'$-degree irreducible
characters of $G$ is $\Aut(G)$-invariant if and only if the
semisimple conjugacy class labeling it is $\Aut(G^*)$-invariant.
Therefore, the result follows if we are able to produce a semisimple
element $s\in G^*$ such that its $G^*$-conjugacy class is
$\Aut(G^*)$-invariant and $|G^*:\bC_{G^*}(s)|_{p'}>q^{n-1}$. Recall
that $\Out(G^*)\cong C_{2f}$ is the cyclic group of order $2f$
consisting of the field automorphisms of $G^*$.

Suppose first that $q$ is odd. Let $s$ be the image of a diagonal
matrix $\widetilde{s}\in \widetilde{G}:=GU_n(q)$ with spectrum
$\Spec(\widetilde{s})=\{1,\ldots,1,-1,-1\}$ under the projection
$\pi:\widetilde{G}\to G^*$ mentioned earlier. Note that $n$ is odd.
Similar arguments as above show that
$\bC_{\widetilde{G}}(\widetilde{s})$ is the complete inverse image
of $\bC_{G^*}(s)$. Moreover,
$\bC_{\widetilde{G}}(\widetilde{s})\cong GU_{n-2}(q)\times GU_2(q)$.
It follows that
\[
|G^*:\bC_{G^*}(s)|_{p'}=|\widetilde{G}:\bC_{\widetilde{G}}(\widetilde{s})|_{p'}
=\frac{(q^n+1)(q^{n-1}-1)}{(q^2-1)(q+1)}.
\]
Observe that, as $\widetilde{s}$ is invariant under all field
automorphisms, $s$ is $\Aut(G^*)$-invariant. The required character
$\chi$ can be taken to be the (only) semisimple character in the
Lusztig series associated to $s$.

%Assume now that $q$ is even, and furthermore, $n>q+1$. In
%particular, $n\geq 5$. Let
%$\Phi:=\{\lambda^{q-1}:\lambda\in\FF_{q^2}^\times\}$, so that $\Phi$
%consists of nonzero element in $\FF_{q^2}$ of order dividing $q+1$
%and $|\Phi|=q+1$. Choosing now $\widetilde{s}$ to be a diagonal
%matrix with $\Spec(\widetilde{s})=\Phi\cup \{1^{n-q-1}\}$ instead,
%we have $\bC_{\widetilde{G}}(\widetilde{s})\cong GU_{n-q-1}(q)\times
%GU_1(q)^{q+1}$, and
%\[
%|G:\bC_G(s)|_{p'}=\frac{\prod_{i=n-q}^n (q^i-(-1)^i)}{(q+1)^{q+1}}.
%\]
%It is straightforward to check that this is greater than $q^{n-1}$,
%and we conclude as in the previous case.

Assume now that $q=2^f$ with $f$ even. We then take $\widetilde{s}$
to be a diagonal matrix with
$\Spec(\widetilde{s})=\{\delta^{(q^2-1)/3},
\delta^{2(q^2-1)/3},1^{n-2}\}$ instead, where $\delta$, as before,
is a generator of $\FF_{q^2}^\times$. Again, by Lemma~\ref{lem:cen},
we have $\bC_{\widetilde{G}}(\widetilde{s})\cong GU_{n-2}(q)\times
GL_1(q^2)$ and $\bC_{\widetilde{G}}(\widetilde{s})$ is the complete
inverse image of $\bC_{G^*}(s)$. (This is clear when $n>3$. When
$n=3$, the image of the homomorphism $\tau:
\pi^{-1}(\bC_{G^*}(s))\to \bZ(\widetilde{G})$ defined by
$gsg^{-1}=\tau(g)s$ is contained in $\{\mathrm{Id},
\delta^{(q^2-1)/3}\mathrm{Id}, \delta^{2(q^2-1)/3}\mathrm{Id}\}$.
However, both $\delta^{(q^2-1)/3}\mathrm{Id}$ and
$\delta^{2(q^2-1)/3}\mathrm{Id}$ have order $3$, which does not
divide $|\bZ(\widetilde{G}|=q+1$. Therefore we still have
$\tau(g)=\mathrm{Id}$ for all $g\in \pi^{-1}(\bC_{G^*}(s))$.) We
then have
\[
|G^*:\bC_{G^*}(s)|_{p'}=\frac{(q^n+1)(q^{n-1}-1)}{(q^2-1)(q-1)}.
\]
It is straightforward to check that this is greater than $q^{n-1}$,
and we conclude as in the previous case.

We solve the remaining case $q=2^f$ with $f$ odd by a somewhat
different argument, working with the ambient algebraic group of $G$
and its associated Frobenius map instead. So let $\bG$ be a simple
algebraic group of adjoint type of type $A$, defined over an
algebraically closed field of characteristic $2$. Let $F_f$ be the
standard Frobenius map on $\bG$, raising all matrix entries to the
$2^f$-th power and $\rho$ the inverse transpose. Set
$F:=F_f\circ\rho$. Then $G=G^*$ is precisely the group of $F$-fixed
points in $\bG$. Let $F_0:=F_1\circ\rho$. As $f$ is odd, we have
$F=F_f\circ\rho=F_0^f$. Note that $F_1$ induces a generator, say
$\sigma_{F_1}$, for $\Out(G)\cong C_{2f}$; on the other hand, $F_0$
induces $\sigma_{F_0}\in\Out(G)$ of order $f$ and $\langle
\sigma_{F_0}\rangle=\langle (\sigma_{F_0})^2\rangle=\langle
(\sigma_{F_1})^2\rangle$. By the work of Brunat
\cite[Proposition~2]{Brunat09}, the number of
$\sigma_{F_0}$-invariant irreducible characters of $G$ of odd degree
is equal to the number of semisimple characters of
$\bG^{F_0}=PGU_n(2)\cong SU_n(2)$. (Note that, by the assumptions,
$f$ is odd and $(n,2^f+1)=1$. In particular, $n$ is coprime to $3$.)
This number, in turn, is $2^{n-1}$, by
\cite[Corollary~8.3.6]{Carter85}. These $2^{n-1}$ characters are
permuted by $\sigma_{F_1}$, forming orbits of size $1$ or $2$. The
trivial character forms its own orbit. Consequently, at least one of
the nontrivial characters, say $\chi$, is fixed by $\sigma_{F_1}$ --
that is, $\chi$ is $\Out(G)$-invariant. By
Lemma~\ref{lem:Weil-character}, $\chi$ cannot be a Weil character.
Moreover, under the assumptions on $n$, $q$, and $f$, we have $n
\geq 5$. It then follows from \cite[Table~V]{TZ96} that $\chi(1)\geq
(q^n+1)(q^{n-1}-q^2)/(q+1)(q^2-1)$, and therefore $\chi(1)>q^{n-1}$.
This concludes the proof.
\end{proof}

\begin{lem}\label{lem:11}
Let $G = \bG^F$ be the group of fixed points of a simple algebraic
group $\bG$ of simply connected type over an algebraically closed
field of characteristic $p$, under a Steinberg endomorphism $F$ of
$\bG$, and assume that $G/\bZ(G)$ is simple. Let $P \in \Syl_p(G)$.
Then
\[
b\bigl(\bN_G(P)\bigr) \leq |T/\bZ(G)|,
\]
where $T := \bT^F$ and $\bT$ is a maximally split $F$-stable maximal
torus of $\bG$.
\end{lem}

\begin{proof}
Let $\bB$ be an $F$-stable Borel subgroup of $\bG$ containing $\bT$.
Let $\Phi$ be the root system of $\bG$ with respect to $\bT$ and
$\bB$. Let $\Phi^+$ and $\Delta=\{\alpha_i:i\in I\}$ be the
corresponding set of positive roots and simple roots, respectively.
Also, let $\bX_\alpha$ denote the root subgroup associated to each
$\alpha\in\Phi$.

Let $\bU:=\prod_{\alpha\in \Phi^+}\bX_{\alpha}$, which is the
unipotent radical of $\bB$. According to
\cite[Corollary~24.11]{MT11}, we have $P:=\bU^F\in\Syl_p(G)$ and
$\bB^F=\bN_G(P)$. Furthermore, \[\bN_G(P)=P\rtimes T,\] where
$T:=\bT^F$. At this point we have $b(\bN_G(P))\leq |\bN_G(P)/P|=|T|$
but, in many cases, this upper bound for $b(\bN_G(P))$ is not
sufficient. We need to know more details about the action of $T$ on
$\Irr(P/P')$.

Let $\bU_c:=\prod_{\alpha\in \Phi^+\setminus\Delta} \bX_{\alpha}$.
As explained in \cite[\S2.9]{Carter85}, $\bU_c$ is normal in $\bU$
and $\bU/\bU_c=\prod_{i\in I}\bX_{\alpha_i}$. The endomorphism $F$
naturally acts on the roots, and thus on the root subgroups, given
by $F(\bX_\alpha)=\bX_{F(\alpha)}$. As $\bB$ and $\bT$ are
$F$-stable, $F$ permutes the positive roots, as well as the simple
roots. Both $\bU$ and $\bU_c$ are therefore $F$-stable. Let $\rho$
denote the permutation on $I$ induced from the action of $F$ on the
simple roots, and $\mathcal{O}$ be the set of $\rho$-orbits on $I$.
For each such an orbit $J$, let $\bX_J:=\prod_{i\in J}
\bX_{\alpha_i}$, which is an $F$-stable group. Further, let
$X_J:=\bX_J^F$. We then have
\begin{equation}\label{eq:1}\bU^F/{\bU_c}^F= \prod_{J\in \mathcal{O}}
X_J.\end{equation}

Note that each $\bX_\alpha$ is normalized by $\bT$ (see
\cite[p.~18]{Carter85}). Thus $\bU^F$ and ${\bU_c}^F$ are both
normalized by $T=\bT^F$, and so $T$ acts on the factor group
$\bU^F/{\bU_c}^F$. In fact, each direct factor $X_J$ with $J\in
\mathcal{O}$ of $\bU^F/{\bU_c}^F$ is normalized by $T$. %Note that
%$X_J$ is isomorphic to $\FF_{q^{|J|}}$ and $|X_J|=q^{|J|}$, where
%$q$ is the absolute value of all eigenvalues of $F$ on the character
%group of an $F$-stable maximal torus of $\bG$. Furthermore, as noted
%in \cite[p.~74]{Carter85}, we have
%\[
%|T|=\prod_{J\in \mathcal{O}} (q^{|J|}-1).
%\]
Remark that $\bZ(G)\leq T$ (see the proof of
\cite[Lemma~24.12]{MT11}) and $\bZ(G)$ acts trivially on $\bU^F$.
Therefore the maximal size of a $T$-orbit on $\bU^F/{\bU_c}^F$ is at
most $|T/\bZ(G)|$.

Assume from now on that $G\notin \{Sp_{2n}(2),F_4(2),G_2(3)\}$. (For
these exceptions, a Sylow $2$-subgroup of $G$ is self-normalizing
and the desired inequality is immediate.) Then, according to
\cite[Lemma~5]{Brunat09}, ${\bU_c}^F$ is the derived subgroup $P'$
of $P=\bU^F$. As the actions of $T$ on $P/P'$ and $\Irr(P/P')$ are
isomorphic, $|T/\bZ(G)|$ is also an upper bound for the sizes of the
$T$-orbits on $\Irr(P/P')$. Moreover, the restriction of every
irreducible character of $P/P'\rtimes T$ to $P/P'$ is
multiplicity-free, as $T$ is abelian and $|T|$ is coprime to $p$.
The result now follows.
\end{proof}

\begin{lem}\label{lem:12}
Assume the hypotheses of Lemma~\ref{lem:11}. Then $S=G/\bZ(G)$
satisfies Hypothesis~\ref{hypo}, unless $G=SU_n(q)$ with $n\geq 3$
odd and $(n,q+1)=1$.
\end{lem}

\begin{proof}
%By the hypothesis, $S$ is a quotient of $G=\bG^F$, the group of
%fixed points of a simple algebraic group $\bG$ of simply-connected
%type over an algebraically closed field of characteristic $p$ under
%a Steinberg endomorphism $F$ of $\bG$.
Clearly, $m(G)\leq m(S)$ and
$b(\bN_G(Q))\geq b(\bN_S(P))$ for $P\in\Syl_p(S)$ and $Q\in
\Syl_p(G)$. Therefore, if Hypothesis~\ref{hypo} holds for $G$, then
it also holds for $S$. Sylow $p$-subgroups and their normalizers of
a finite reductive group in characteristic $p$ are best described
through the framework of Borel subgroups and their unipotent
radicals in the ambient algebraic group. For convenience, we shall
use the same $P$ for a Sylow $p$-subgroup of $G$ (in fact, $P\cong
Q$, see \cite[p.~214]{MT11} ).

Note that the group $X_J$ in the proof of Lemma~\ref{lem:11} is
isomorphic to $\FF_{q^{|J|}}$ and $|X_J|=q^{|J|}$, where $q$ is the
absolute value of all eigenvalues of $F$ on the character group of
an $F$-stable maximal torus of $\bG$. Furthermore, as noted in
\cite[p.~74]{Carter85}, we have
\[
|T|=\prod_{J\in \mathcal{O}} (q^{|J|}-1).
\]
Lemma~\ref{lem:11} therefore implies that
\[
b(\bN_G(P))\leq \frac{\prod_{J\in \mathcal{O}}
(q^{|J|}-1)}{|\bZ(G)|}.
\]

%In most cases, $({\prod_{J\in \mathcal{O}} (q^{|J|}-1)})/{|\bZ(G)|}$
%is bounded above by $m(G)$. There are, however, some exceptions that
%we have to examine individually.

The $\rho$-action on the set of the simple roots, defined in the
proof of Lemma~\ref{lem:11}, for all the relevant $(\bG,F)$ is
described in \cite[p.~37]{Carter85}, allowing one to easily
determine the sizes of $\rho$-orbits. Generally, $|J|\in\{1,2,3\}$
for all $(\bG,F)$ and when $G$ is untwisted, $|J|=1$ for all $J$. On
the other hand, the values of the smallest nontrivial $p'$-degree
$m(G)$ can be read off from \cite{TZ96} and \cite{Nguyen10} when $G$
is of classical type and those for exceptional types can be found in
\cite{Lubeck01}.

Consider $G=SL_2(q)$ with $q$ odd. Then $|T|=q-1$ and so
$b(\bN_G(P))\leq (q-1)/2$. On the other hand, $m(G)=(q-1)/2$ and so
Hypothesis~\ref{hypo} is satisfied. (In this case, in fact,
$b(\bN_G(P))=m(G)=(q-1)/2$. Each element $t$ of $T\cong
\FF_q^\times$ acts on $P\cong \FF_q^+$ by mapping $u$ to $ut^2$, and
so the stabilizer in $T$ of any nontrivial (linear) character of $P$
is precisely the order-2 subgroup of $T$. Therefore $\bN_G(P)$ has
$q-1$ linear characters and four characters of degree $(q-1)/2$.)
Similarly, for $G=SL_2(q)$ with $q$ even, one has $m(G)=q-1=|T|$ and
Hypothesis~\ref{hypo} is still valid.

Let $G=SL_n(q)$ with $n\geq 3$. Here $|T|=(q-1)^{n-1}$ and
$m(G)=(q^n-1)/(q-1)$, unless $(n,q)=(4,3)$. Note that
$m(SL_4(3))=26$. In any case, we have $|T|\leq m(G)$, and
Hypothesis~\ref{hypo} is verified.

Let $G=SU_n(q)$ with $n\geq 3$. Then
\begin{equation*}
|T| = \begin{cases}
(q^2-1)^{(n-1)/2} &\text{ if } n \text{ is odd}\\
(q^2-1)^{(n-2)/2}(q-1) &\text{ if } n \text{ is even}.
\end{cases}
\end{equation*}
and, as mentioned already, \[m(G)=(q^n-(-1)^n)/(q+1).\] We now can
verify that Hypothesis~\ref{hypo} holds when $n$ is even or $n$ is
odd and $|\bZ(G)|=(n,q+1)> 1$.

For $G=Sp_{2n}(q)$ or $Spin_{2n+1}(q)$ with $n\geq 2$, we have
$|T|=(q-1)^n$ and $b(\bN_G(P))\leq (q-1)^n/(2,q-1)$. One can verify
from Table~\ref{table} that, $d(G)$, which serves as a lower bound
for $m(G)$, is always at least $(q-1)^n/(2,q-1)$. Similarly, for
$G=Spin_{2n}^\epsilon(q)$ with $\epsilon\in\{\pm\}$ and $n\geq 4$,
we have $b(\bN_G(P))\leq
(q-1)^{n-1}(q-\epsilon1)/(4,q^n-\epsilon1)$, while
$m(G)\geq(q^n-1)(q^{n-1}-1)/2(q+1)$, by \cite[Theorems~1.3 and
1.4]{Nguyen10}. Hypothesis~\ref{hypo} is again satisfied.

Let $G=\ta B_2(q^2)$ or $\ta G_2(q^2)$, where $q^2=2^{2n+1}$ or
$q^2=3^{2n+1}$, respectively. In these cases, $|T|=q^2-1$, which is
less than $m(G)$. (For type $\ta B_2$, $m(G)$ is at least
$\sqrt{1/2}q(q^2-1)$, while for type $\ta G_2$, $m(G)=q^4-q^2+1$.)
Similarly, when $G=\ta F_4(q^2)$, we have $|T|=(q^2-1)^2$ and
$m(G)\geq \sqrt{1/2}q^9(q^2-1)$. In all these cases, the required
inequality holds.

For the remaining exceptional-type groups, we always have $|T|\leq
q^{rk(G)}$, where $rk(G)$ is the semisimple rank of $\bG$. However,
the lower bound for the smallest nontrivial irreducible
representation of $G$, as shown in Table~\ref{table}, confirms that
$m(G)>q^{rk(G)}$.
\end{proof}

\begin{pro}
\label{prop:defining-char} Conjecture~\ref{thm:simple} is true when
$S$ is a quotient of a non-exceptional covering group of a simple
group of Lie type in characteristic $p$.
\end{pro}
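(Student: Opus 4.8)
The plan is to reduce to verifying the degree condition $\Psi(\chi)(1)\le\chi(1)$ for a McKay-good bijection, which is already known to exist for all finite groups by \cite[Theorem~B]{CS}. By Proposition~\ref{pro:key}, it suffices to check Hypothesis~\ref{hypo}, namely $m_p(S)\ge b_p(\bN_S(P))$, whenever that inequality holds, and to treat the exceptional cases by hand. So the first step is to identify, for $S$ a non-exceptional quasisimple group of Lie type in defining characteristic $p$ with Sylow $p$-subgroup $P$, what $\bN_S(P)$ and its $p'$-degree characters look like. Here $P$ is a Sylow $p$-subgroup in defining characteristic, so $\bN_S(P)=\bN_S(U)$ is (essentially) a Borel subgroup $B=U\rtimes T$ modulo center, and $\Irr_{p'}(B)$ consists exactly of the characters of $B$ with $U$ in the kernel, i.e.\ $\Irr(B/U)\cong\Irr(T')$ for the relevant torus quotient $T'$. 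Hence $b_p(\bN_S(P))=1$ in most cases, or a small power of a cyclotomic-type factor coming from the center/diagonal automorphisms, and in any event it is very small compared to $m_p(S)=d(S)$, which for most Lie type groups is of the order $q^{r}$ where $r$ is the rank. Thus Hypothesis~\ref{hypo} holds for the overwhelming majority of these groups.

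The second step is to isolate the families where Hypothesis~\ref{hypo} fails. From the discussion preceding the proposition, the failure occurs for the unitary groups $SU_n(q)$, where the smallest $p'$-degree $m_p(SU_n(q))=(q^n-(-1)^n)/(q+1)$ is realized by the Weil characters $\zeta^i_{n,q}$, and these are \emph{moved} by field automorphisms in general (Lemma~\ref{lem:Weil-character}), so one cannot simply match $1_S\mapsto 1_{\bN_S(P)}$ and the smallest non-trivial pair; in the unitary case $\bN_S(P)$ may have $p'$-degree characters of degree exceeding $m_p(S)$ after one accounts for the full automorphism group, forcing an $A$-equivariance obstruction. To handle these I would invoke Lemma~\ref{lem:SU}: for $n\ge 3$ odd with $(n,q+1)=1$ it produces an $\Aut(G)$-invariant $p'$-degree irreducible character $\chi$ of $SU_n(q)$ with $\chi(1)>q^{n-1}$, which is large enough to sit above \emph{all} $p'$-degree characters of the Borel-type subgroup $\bN_S(P)$. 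One then builds $\Psi$ by hand: send $1_S\mapsto 1_{\bN_S(P)}$, send the $\Aut(G)$-fixed large character $\chi$ to an $A$-fixed character of $\bN_S(P)$ of appropriate degree, and match the remaining characters (which come in $\Aut$-orbits of size $>1$, hence have room to spare on the degree side) respecting the $A$-action. The central-isomorphism condition $\geq_c$ is automatic because both $S$ and $\bN_S(P)$ have the relevant cyclic center acting the same way, or is reduced to the already-established McKay-good bijection on the orbits where degrees are not tight.

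The remaining sub-step is to dispose of the small-rank or small-$q$ configurations not covered by Lemma~\ref{lem:SU} — e.g.\ $SU_n(q)$ with $(n,q+1)>1$ or $n$ even, and the very small cases like $SU_3(2)$, $Sp$, $\Omega$-type groups where the generic degree estimates degenerate — which I would check either directly against \cite{Atlas}/the explicit character-degree tables, or by a slightly sharper version of the same argument using the semisimple characters described via Lusztig series (Lemma~\ref{lem:cen}) to exhibit an $\Aut$-invariant $p'$-character of sufficiently large degree. I expect the main obstacle to be precisely the unitary family: making sure that the $A$-equivariant bijection can be chosen to satisfy the degree inequality simultaneously with $\geq_c$ requires care, because the smallest $p'$-degree characters of $SU_n(q)$ are exactly the Weil characters and they are genuinely permuted by field automorphisms, so one must verify that on each $\Aut$-orbit the matching on the normalizer side can absorb the degree constraint — this is where Lemma~\ref{lem:SU} and the structure of $\bN_S(P)$ as a Borel subgroup do the real work, and where the earlier-noted gap (acknowledged in the paper's thanks) had to be repaired.
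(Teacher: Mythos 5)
Your overall architecture (reduce to Hypothesis~\ref{hypo} via Proposition~\ref{pro:key}, isolate the unitary exception, and repair it with Lemma~\ref{lem:SU}) matches the paper, but the proposal rests on a false structural claim that destroys the quantitative heart of the argument. You assert that for $\bN_S(P)=B=U\rtimes T$ the set $\Irr_{p'}(B)$ consists exactly of the characters with $U$ in the kernel, so that $b_p(\bN_S(P))$ is essentially $1$ or ``a small power of a cyclotomic-type factor.'' This is wrong: every irreducible character of $B$ lying over a \emph{linear} character of $P$ (i.e.\ over $\Irr(P/P')$) has degree dividing $|T|$, hence $p'$-degree, and such characters can have degree as large as $|T/\bZ(G)|\sim (q\mp 1)^{\mathrm{rank}}$. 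Already for $G=SL_2(q)$, $q$ odd, $\bN_G(P)$ has four irreducible characters of degree $(q-1)/2$ not trivial on $P$, and there $b(\bN_G(P))=m(G)=(q-1)/2$ \emph{exactly} -- there is no ``room to spare,'' and the comparison is delicate rather than trivial. The paper's proof therefore has to establish the bound $b(\bN_G(P))\le |T/\bZ(G)|$, which requires Brunat's identification $P'={\bU_c}^F$ (valid only after excluding $Sp_{2n}(2)$, $F_4(2)$, $G_2(2)$, $G_2(3)$, which must be treated separately), the product decomposition of $P/P'$ into the $X_J$'s, the fact that $\bZ(G)\le T$ acts trivially, and the multiplicity-free restriction argument; it then compares $|T/\bZ(G)|$ with $m(G)$ type by type using \cite{TZ96,Nguyen10,Lubeck01}. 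None of this appears in your proposal, and without it the claim that Hypothesis~\ref{hypo} holds ``for the overwhelming majority'' of groups, as well as your disposal of the non-adjoint unitary cases ($n$ even or $(n,q+1)>1$), is unsupported.

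In the genuinely exceptional case $G=SU_n(q)$ with $n$ odd and $(n,q+1)=1$, your description of the repair is also too loose at the two points where the work actually happens. First, the reason Hypothesis~\ref{hypo} fails there is not an ``equivariance obstruction'' caused by the Weil characters being moved by field automorphisms; it is that $T$ (now of adjoint type, a product of cyclic groups $C_{q^2-1}$) has a regular orbit on $\Irr(P/P')$, so $\bN_G(P)$ has a unique $p'$-character $\tau$ of degree $|T|=(q^2-1)^{(n-1)/2}>m(G)$, while all other $p'$-characters of $\bN_G(P)$ have degree at most $|T|/(q^2-1)\le m(G)$. Second, your matching step -- ``the remaining characters come in $\Aut$-orbits of size $>1$, hence have room to spare'' -- is not how the bijection is built and is not even true as stated; what is needed is the uniqueness (hence $A$-invariance) of $\tau$, the $A$-invariant character $\xi$ of large degree from Lemma~\ref{lem:SU} with $\xi(1)>q^{n-1}>\tau(1)$, the already known $A$-equivariant McKay bijection of Brunat and Sp\"ath to equate the numbers of $B$-fixed characters on both sides for every $B\le A$, and then \cite[Lemma~13.23]{Is} to produce an $A$-equivariant bijection between the complements of $\{\xi\}$ and $\{\tau\}$, extended by $\xi\mapsto\tau$. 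Supplying the torus-orbit analysis and this counting argument is exactly the content your proposal is missing.
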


\begin{proof}
We assume that $S$ is not the Tits group ${}^2F_4(2)'$, since
Hypothesis~\ref{hypo} can be verified directly for this group using
\cite{GAP}. By Lemma~\ref{lem:12} and Proposition~\ref{pro:key}, it
remains to consider only the case where $G = SU_n(q)$ with $n \geq
3$ odd and $(n, q+1) = 1$.

In such case, $G$ can be viewed as a group of adjoint type and
therefore, by \cite[\S2.9]{Carter85}, \[T\cong\prod_{J\in
\mathcal{O}} T_J,\] the direct product of cyclic groups $T_J\cong
C_{q^2-1}$. (Here, every orbit $J$, defined in the proof of
Lemma~\ref{lem:11}, has length $2$.) Further, the action of $T$ on
$P/P'\cong \prod_{J\in \mathcal{O}} X_J$ (see \eqref{eq:1}) is a
`product' action, in the way that $T_J$ acts trivially on $X_{J'}$
if $J\neq J'$ and transitively on $X_J \setminus \{1\}$. Therefore
$T$ has a (unique) regular orbit on $P/P'$, as well as on
$\Irr(P/P')$, implying that $b(\bN_G(P))=|T|$ and $\bN_G(P)$ has a
unique $p'$-degree irreducible character of degree
$|T|=(q^2-1)^{(n-1)/2}$. We shall denote this character by $\tau$.
Every other $p'$-degree irreducible character of $\bN_G(P)$
restricts trivially to at least one of $X_J$'s, and hence has degree
at most $|T|/(q^2-1)$.

Note that, as $(n,q+1)=1$, $G$ is simple and the group, say $A$, of
outer automorphisms of $G$ is cyclic and stabilizes (the unipotent
subgroup) $P$. Moreover, a bijection between $\Irr_{p'}(G)$ and
$\Irr_{p'}(\bN_G(P))$ is McKay-good if and only if it is
$A$-equivariant. The existence of such a bijection was established
in \cite{Brunat09,Spath12}. In other words, we know that, for every
subgroup $B\leq A$, the numbers of $B$-fixed characters in
$\Irr_{p'}(G)$ and $\Irr_{p'}(\bN_G(P))$ are the same. Note that the
above-mentioned character $\tau$ of $\bN_G(P)$ is $A$-invariant (due
to its uniqueness property). Let $\xi$ be the character of $G$
produced by Lemma~\ref{lem:SU}; in particular, $\xi$ is $p'$-degree
and $A$-invariant. Now the numbers of $B$-fixed characters in
$\Irr_{p'}(G)\setminus\{\xi\}$ and
$\Irr_{p'}(\bN_G(P))\setminus\{\tau\}$ are the same for every $B\leq
A$. Using \cite[Lemma~13.23]{Is}, one can construct an
$A$-equivariant bijection from $\Irr_{p'}(G)\setminus\{\xi\}$ to
$\Irr_{p'}(\bN_G(P))\setminus\{\tau\}$, which can be extended to an
$A$-equivariant bijection $\Psi:\Irr_{p'}(G)\to\Irr_{p'}(\bN_G(P))$
such that $\Psi(\xi)=\tau$.

We claim that $\chi(1)\geq \Psi(\chi)(1)$ for all $\chi\in
\Irr_{p'}(G)$. First, observe that
\[\xi(1)>q^{n-1}>(q^2-1)^{(n-1)/2}=\tau(1).\] On the other hand,
when $\chi\neq\xi$, we have $\Psi(\chi)\neq \tau$, and so
$\Psi(\chi)(1)\leq |T|/(q^2-1)=(q^2-1)^{(n-3)/2}$, which implies
that
\[
\chi(1)\geq (q^n-(-1)^n)/(q+1)\geq \Psi(\chi)(1),
\]
and this finishes the proof.
\end{proof}

%%%%%%%%%%%%%%%%%%%%%%%%%%%%%%%%%%%%%%%%%%%%%%%%%%%%%%%%%%

\begin{table}[ht]
\caption{Values and/or bounds for the minimal nontrivial degree of
ordinary characters of finite reductive groups of simply-connected
type \cite{TZ96,Lubeck01}.\label{table}}
\begin{center}
\begin{tabular}{lll}
\hline $G$ & \begin{tabular}{l} Conditions\end{tabular} & \begin{tabular}{l}$d(G)$\end{tabular} \\
\hline

$SL_2(q)$& \begin{tabular}{l} $q\geq
5$\end{tabular}&\begin{tabular}{l}${(q-1)}/{(2,q-1)}$\end{tabular}\\\hdashline

$SL_n(q)$& \begin{tabular}{l} $n\geq 3$\\
$(n,q)\not\in\{(3,2),(3,4),(4,2),(4,3)\}$\end{tabular} &
\begin{tabular}{l}${(q^n-1)}/{(q-1)}$\end{tabular}
\\\hdashline

$SU_n(q)$ & \begin{tabular}{l} $n\geq 3$ odd; $(n,q)\neq (3,2)$\\
$n\geq 4$ even; $(n,q)\not\in\{(4,2),(4,3)\}$\end{tabular}  & \begin{tabular}{l}${(q^n-q)}/{(q+1)}$ \\
 ${(q^n-1)}/{(q+1)}$\end{tabular}\\\hdashline

$Sp_{2n}(q)$& \begin{tabular}{l} $n\geq 2$; $q$ odd\\
$n\geq 2$; $q$ even; $(n,q)\neq(2,2)$ \end{tabular}& \begin{tabular}{l} ${(q^n-1)}/{2}$\\
${(q^n-1)(q^n-q)}/{2(q+1)}$ \end{tabular} \\\hdashline

$Spin_{2n+1}(q)$& \begin{tabular}{l}$n\geq 3$; $q>3$ odd\\
$n\geq 3$; $q=3$
\end{tabular} &
\begin{tabular}{l}${(q^{2n}-1)}/{(q^2-1)}$\\
$(q^n-1)(q^n-q)/(q^2-1)$ \end{tabular}
\\\hdashline

$Spin^+_{2n}(q)$& \begin{tabular}{l} $n\geq 4$; $q>3$\\
$n\geq 4$; $q\in\{2,3\}$; $(n,q)\neq (4,2)$ \end{tabular} &
\begin{tabular}{l}$(q^n-1)(q^{n-1}+q)/(q^2-1)$\\
$(q^n-1)(q^{n-1}-1)/(q^2-1)$ \end{tabular}
\\\hdashline

$Spin^-_{2n}(q)$& \begin{tabular}{l}$n\geq 4$ \end{tabular} &
\begin{tabular}{l}$(q^n+1)(q^{n-1}-q)/(q^2-1)$\end{tabular}\\\hdashline

$\ta B_2(q^2)$& \begin{tabular}{l}$q^2=2^{2f+1}\geq 8$ \end{tabular}
&
\begin{tabular}{l}$\sqrt{1/2}q(q^2-1)$\end{tabular} \\\hdashline

$\ta G_2(q^2)$& \begin{tabular}{l}$q^2=3^{2f+1}\geq 27$\end{tabular}
&
\begin{tabular}{l}$q^4-q^2+1$\end{tabular}
\\\hdashline

$\ta F_4(q^2)$& \begin{tabular}{l}$q^2=2^{2f+1}\geq 8$ \end{tabular}
&
\begin{tabular}{l}$\sqrt{1/2}q^9(q^2-1)$\end{tabular}\\\hdashline

$G_2(q)$& \begin{tabular}{l}$q\geq 3$ \end{tabular} &
\begin{tabular}{l}$\geq q^3-1$\end{tabular}
\\\hdashline

$\tb D_4(q)$&  & \begin{tabular}{l}$\geq q^3(q^2-1)$\end{tabular}
\\\hdashline

$F_4(q)$&  & \begin{tabular}{l}$\geq q^8+q^4+1$\end{tabular}
\\\hdashline

$E_6(q)_{sc}$&  & \begin{tabular}{l}$\geq q^9(q^2-1)$\end{tabular}
\\\hdashline

$\ta E_6(q)_{sc}$&  & \begin{tabular}{l}$\geq
q^9(q^2-1)$\end{tabular}
\\\hdashline

$E_7(q)_{sc}$&  & \begin{tabular}{l}$\geq
q^{15}(q^2-1)$\end{tabular}
\\\hdashline

$E_8(q)$&  & \begin{tabular}{l}$\geq q^{27}(q^2-1)$ \end{tabular} \\

\hline
\end{tabular}
\end{center}
\end{table}
%%%%%%%%%%%%%%%%%%%%%%

\subsection{Groups of Lie type in characteristic not equal to $p$}

\begin{lem}
\label{lem:1} Let $H\leq G$ be such that $|G:H|$ is not divisible by
$p$. Then $b(H)\leq b(G)$.
\end{lem}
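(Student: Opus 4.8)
Let $H\leq G$ with $|G:H|$ coprime to $p$, and we want $b(H)\leq b(G)$, where $b(-)=b_p(-)$ is the largest $p'$-degree of an irreducible character.

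The plan is to start with a character $\psi\in\Irr_{p'}(H)$ of degree $b(H)$ and push it up to $G$ by induction. Consider $\psi^G$. Its degree is $\psi(1)|G:H|$, which is a $p'$-number since $\psi(1)$ is a $p'$-number by choice of $\psi$ and $|G:H|$ is a $p'$-number by hypothesis. Now decompose $\psi^G=\sum_i a_i\chi_i$ into irreducible constituents. Since $\psi^G(1)$ is not divisible by $p$, at least one constituent $\chi_i$ must have $\chi_i(1)$ not divisible by $p$ as well (otherwise $p$ would divide every term $a_i\chi_i(1)$, hence the sum $\psi^G(1)$). Fix such a constituent $\chi:=\chi_i\in\Irr_{p'}(G)$.

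The final step is the degree comparison. By Frobenius reciprocity, $\chi$ being a constituent of $\psi^G$ is equivalent to $\psi$ being a constituent of $\chi_H$. Hence $\chi(1)=\chi_H(1)\geq\psi(1)=b(H)$. Therefore $b(G)\geq\chi(1)\geq b(H)$, as desired. That is the entire argument; there is no real obstacle here — it is the standard observation that $p'$-degree characters are preserved under induction/restriction to subgroups of $p'$-index, exactly the mechanism already used in the proof of Proposition~\ref{pro:giannelli implies A} (where $(\psi_i)^G$ is noted to contain a $p'$-degree irreducible constituent of $G$). If one wants to be slightly more careful about the wording, one phrases it as: among the irreducible constituents of the character $\psi^G$, the sum of the $a_i\chi_i(1)$ over those $\chi_i$ of $p'$-degree must itself be a $p'$-number congruent to $\psi^G(1)$ modulo $p$, hence nonzero, so such a constituent exists.
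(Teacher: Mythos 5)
Your proof is correct and is essentially the same as the paper's: induce a $p'$-degree character of $H$ of maximal degree, note that $\psi^G(1)=\psi(1)|G:H|$ is a $p'$-number so some irreducible constituent $\chi$ has $p'$-degree, and use Frobenius reciprocity to get $\chi(1)\geq\psi(1)=b(H)$. Nothing further is needed.
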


\begin{proof}
Let $\varphi\in\Irr_{p'}(H)$ such that $\varphi(1)=b(H)$. By
Frobenius reciprocity, $\varphi$ is contained in $\chi_H$ for any
irreducible constituent $\chi$ of $\varphi^G$. Since
$\varphi^G(1)=\varphi(1)|G:H|$ is not divisible by $p$, at least one
of those constituents has $p'$-degree.
\end{proof}

%\begin{lemma}
%Suppose that the power of $\Phi_e$ in the order polynomial of $\bG$
%is 1. Then
%\[
%|W_{\bG}(\bL_e)|\leq \Phi_e(q)-1.
%\]
%\end{lemma}
%
%\begin{proof}
%From the hypothesis we know that $S_e$ is cyclic of order
%$\Phi_e(q)$ and $S_e$ contains a Sylow $p$-subgroup of $G$. Each
%element $\bN_G(\bS_e)$ induces an automorphism on $S_e$ by
%conjugation, and the mapping $\tau:\bN_G(\bS_e)\to \Aut(S_e)$ given
%by $\tau(g)(a):=a^{g}$ is a homomorphism.
%
%Let $g\in\Ker(\tau)$. Write $g=g_{s}$
%\end{proof}

\begin{pro}
\label{prop:cross-char} Conjecture~\ref{thm:simple} is true when $S$
is a non-exceptional covering group of a simple group of exceptional
Lie type in characteristic different from $p$.
\end{pro}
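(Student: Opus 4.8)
The plan is to reduce to Hypothesis~\ref{hypo} and Proposition~\ref{pro:key} whenever possible, so that most cases follow from a direct degree comparison. For a quasisimple group $S$ with $S/\zent S$ of exceptional Lie type in characteristic $\ell\neq p$, write $S=G/Z$ with $G=\bG^F$ where $\bG$ is simple of simply-connected type, and let $P\in\Syl_p(S)$, $Q\in\Syl_p(G)$. As in the proof of Proposition~\ref{prop:defining-char}, I would first observe that $m(S)\geq m(G)$ while $b(\bN_S(P))\leq b(\bN_G(Q))$, so it suffices to check Hypothesis~\ref{hypo} (or the stronger $d(S)\geq b(\bN_S(P))$) for the quasisimple group $G$ itself; and $b(\bN_G(Q))$ is bounded above by $|\bN_G(Q)|_{p'}$. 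The lower bounds for $d(G)$ for the exceptional types are available from L\"{u}beck's tables (\cite{Lubeck01}), summarized in Table~\ref{table}, and these are typically enormous (of the order $q^{\mathrm{rk}}$ with large exponents: $q^9(q^2-1)$ for $E_6$, $q^{15}(q^2-1)$ for $E_7$, $q^{27}(q^2-1)$ for $E_8$, etc.), while $|\bN_G(Q)|_{p'}$ divides $|W||\bT^F|$-type quantities which for $p\nmid q$ are comparatively small. So the first main step is: for $p\nmid q$ and $S$ of type $G_2$, $\tw3D_4$, $F_4$, $E_6$, $\tw2E_6$, $E_7$, $E_8$ (and the Suzuki/Ree families), bound $|\bN_G(Q)|_{p'}$ and compare with the table.

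For the bound on $b(\bN_G(Q))$ I would split according to whether $p$ is a good prime dividing $|G|$ and, more to the point, according to the structure of $Q$. When $p$ is not too small, Sylow $p$-subgroups are abelian (indeed contained in a torus), and $\bN_G(Q)/\C_G(Q)$ embeds in the relative Weyl group, so $|\bN_G(Q)|_{p'} \leq |\C_G(Q)|_{p'}\cdot|W|$, which one bounds crudely by $|W|\cdot|\bT^F|$ for a suitable maximal torus $\bT$; since each $|\bT^F|$ divides $\prod(q^{d_i}-\varepsilon_i)$ with $d_i$ among the degrees of $W$, this is at most roughly $q^{\mathrm{rk}}$ times a constant, far below $d(G)$. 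The finitely many remaining small primes ($p\in\{2,3,5,7\}$ depending on the type, with $p\mid|G|$) and small $q$ are where abelianness of $Q$ can fail; here the second step is to either invoke the explicit determination of $\bN_G(Q)$ in the literature or, for genuinely small cases ($q$ small, e.g. $G_2(3),G_2(4),F_4(2),\tw2E_6(2)$, etc.), verify Hypothesis~\ref{hypo} directly with \cite{GAP}. Throughout, since $S$ is a quotient $G/Z$ and we are comparing $m$'s upward and $b$'s downward, working with $G$ suffices and the center causes no trouble; likewise, the $A$-equivariance and central-isomorphism requirements of Conjecture~\ref{thm:simple} are free once Hypothesis~\ref{hypo} holds, by Proposition~\ref{pro:key} (any $1_S\mapsto 1_{\bN_S(P)}$ bijection works — but we still need an $A$-equivariant \emph{McKay-good} bijection to exist, which it does by \cite[Theorem~B]{CS}; so really one combines the known McKay-good bijection with the automatic degree inequality from $m(S)\ge b(\bN_S(P))$, exactly as in Proposition~\ref{pro:key}).

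The one family where Hypothesis~\ref{hypo} genuinely fails and more care is needed is when $p\nmid q$ but $p$ is a \emph{large-ish} prime for which $\bN_G(Q)$ has a large $p'$-part — this does not actually happen for exceptional types in the same dramatic way as it does for classical groups and unitary groups in defining characteristic, because the rank is bounded; but for completeness I would check the borderline primes (those $p$ with $p-1$ small relative to the torus orders, or $p$ dividing $|W|$) individually. In practice, given $\mathrm{rk}\leq 8$ and the explicit character degree tables of \cite{Lubeck01}, a short case analysis suffices: for each exceptional type and each prime $p\mid|S|$ with $p\nmid q$, either $d(S)\geq |\bN_S(P):P| \ge b(\bN_S(P))$ outright, or $S$ lies in a short finite list handled by \cite{GAP}.

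The main obstacle I anticipate is not any single deep step but the bookkeeping: cleanly bounding $b(\bN_G(Q))$ uniformly across all primes, including the bad primes $p\in\{2,3,5\}$ (and $7$ for $E_7,E_8$) where $Q$ need not be abelian and $\bN_G(Q)$ is more complicated, and then confirming that the finite list of exceptional small cases is genuinely finite and checkable in \cite{GAP}. A secondary subtlety is ensuring that passing from the simply-connected $G$ down to an arbitrary covering group $S$ (and to the simple quotient) really does only help — i.e. that $m$ does not drop and $b(\bN_{\cdot}(P))$ does not rise — which is the content of Lemma~\ref{lem:1} applied to $\bN_S(P)\leq \bN_G(Q)/Z$ together with the elementary fact that restriction of characters cannot increase the minimal nontrivial $p'$-degree; I would state these reductions carefully at the start and then let Table~\ref{table} do the rest.
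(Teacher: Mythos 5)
Your overall strategy coincides with the paper's: reduce to Hypothesis~\ref{hypo} via Proposition~\ref{pro:key}, bound $b(\bN_G(Q))$ using structural information about Sylow normalizers, and compare against the minimal degrees of Table~\ref{table} (L\"ubeck, Tiep--Zalesski). But the execution has concrete gaps. First, your "crude" bound $b(\bN_G(Q))\le|\bC_G(Q)|_{p'}\,|W|\le|W|\,|\bT^F|$ is neither justified nor sufficient. Even when $Q$ is abelian and lies in a torus, $\bC_G(Q)$ need not be bounded by a maximal torus: for non-regular $e$ (e.g.\ $e\in\{7,9,14,18\}$ for $E_8$) it contains the non-abelian minimal $e$-split Levi $\bL_e^F$, so the asserted inequality simply fails as stated. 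And even where it holds, it is numerically inadequate for small $q$: for $\tb D_4(q)$ one gets roughly $|W|\,q^4$ against $d(G)\approx q^3(q^2-1)$, which fails for a long (non-GAP-checkable) range of $q$, and for $(G,p)=(E_8(2),3)$, $(E_7(2)_{sc},3)$, $(\ta E_6(2)_{sc},3)$ even the sharper bound $b(\bN_G(\bL_e))\le|W_\bG(\bL_e)|$ fails, since $|W(E_8)|=696729600>d(E_8(2))=545925250$; the paper must pass to the $p'$-part of $|W_\bG(\bL_e)|$ there, and must use the separate bound $|W_\bG(\bL_e)|\,|L_e:S_e|$ when $e$ is not regular.

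Second, and more seriously, your fallback that the problematic cases form "a short finite list handled by \cite{GAP}" is false. Non-abelian Sylow $p$-subgroups occur for fixed small $p$ and unbounded $q$ (infinite families), and the genuinely delicate individual cases above ($E_8(2)$, $E_7(2)_{sc}$, $\ta E_6(2)_{sc}$ at $p=3$) are far beyond a GAP verification of Hypothesis~\ref{hypo}. What makes a uniform argument possible — and what your appeal to "the explicit determination of $\bN_G(Q)$ in the literature" leaves unidentified — is Malle's theorem that $\bN_G(P)\le\bN_G(\bS_e)$ for a Sylow $e$-torus \cite[Theorem~7.8]{Malle07}; combined with Lemma~\ref{lem:1} this reduces everything to $b(\bN_G(\bL_e))$ and the tables of relative Weyl groups in \cite{BM93,GM20}. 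Moreover this containment has exceptions that your sketch never addresses: $p=3$ and $G_2(q)$ with $q\equiv 2,4,5,7\pmod 9$, and the Suzuki/Ree groups, which require the adapted polynomials $\Phi_e^{(p)}$ and still leave out $p=2$ for $\ta G_2(q)$ and $p=3$ for $\ta F_4(q)$ with $q\equiv 2,5\pmod 9$; the paper disposes of these via \cite{An94,An98} and \cite{IMN07}. Your finer test $d(S)\ge|\bN_S(P):P|\ge b(\bN_S(P))$ is sound in principle (the second inequality is fine since $P\nor\bN_S(P)$), but computing or bounding $|\bN_S(P):P|$ uniformly is exactly the content of this machinery, so as written the proof does not go through.
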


\begin{proof}
Assume for now that $S$ is not of Suzuki or Ree type. As before, $S$
is a quotient of $G:=\bG^F$, where $\bG$ is a simple algebraic group
of simply-connected type and $F$ is a Steinberg endomorphism on
$\bG$. Let $q$ be the absolute value of all eigenvalues of $F$ on
the character group of an $F$-stable maximal torus of $\bG$. We know
that $p$ does not divide $q$, and we may assume that $p$ divides
$|G|$.

We require some $d$-Harish-Chandra theory, particularly the concept
of Sylow $d$-tori, which was first introduced by Brou\'{e} and Malle
in \cite{BM92}. (For a detailed account, see also
\cite[\S3.5]{GM20}.) Define $e$ as the multiplicative order of $q$
modulo $p$ if $p>2$ or if $p=2$ and $q\equiv 1(\bmod\,4)$. For $p=2$
and $q\equiv -1(\bmod\,4)$, let $e:=2$. As $p\mid |G|$, we know that
$\Phi_e(q) \mid |G|$, where $\Phi_e$ is the $e$-th cyclotomic
polynomial. Let $\bS_e$ be a Sylow $e$-torus of $\bG$. It is, by
definition, an $F$-stable torus of $\bG$ whose order polynomial is
the maximal power of $\Phi_e$ dividing the generic order of $\bG$
(see \cite[p.~259]{GM20}).

Let $\bL_e:=\bC_\bG(\bS_e)$, known as a (minimal) $e$-split Levi
subgroup of $\bG$. It is $F$-stable (see \cite[p.~258]{GM20}) and we
write $L_e:=\bL_e^F$. Note that, by the conjugation property of
Sylow $d$-tori, we have $\bN_G(\bS_e)=\bN_G(\bL_e)$. The quotient
$W_\bG(\bL_e):=\bN_G(\bL_e)/\bL_e^F$ is referred to as the
\emph{relative Weyl group} of $\bL_e$ in $\bG$.

According to \cite[Proposition~5.21]{Malle07}, $\bN_G(\bS_e)$
contains a Sylow $p$-subgroup, say $P$, of $G$. In fact, by
\cite[Theorem~7.8]{Malle07}, $\bN_G(\bS_e)$ contains $\bN_G(P)$,
unless $p=3$ and $G = G_2(q)$ with $q\equiv2, 4, 5$, or $7
(\bmod\,9)$. For now, let us exclude this exception. It follows from
Lemma~\ref{lem:1} that
\[b(\bN_G(P))\leq b(\bN_G(\bL_e)).\]

Suppose first that $e$ is a regular number for $(\bG,F)$, which
means that $\bL_e$ is a maximal torus of $\bG$ (see
\cite[p.~259]{GM20}). Then $L_e:=\bL_e^F$ is abelian, and thus
$b(\bN_G(\bL_e))\leq |\bN_G(\bL_e)/L_e|$, by
\cite[Corollary~11.29]{Is}. In summary, if $e$ is a regular number
for $(\bG,F)$ and $G$ is not of type $G_2$, then
\[
b(\bN_G(P))\leq |W_\bG(\bL_e)|.
\]

Consider the case when the power of $\Phi_e$ in the order polynomial
of $\bG$ is precisely 1, or equivalently, where $W_\bG(\bL_e)$ is
cyclic (see \cite[Proposition~3.5.12]{GM20}). As computed in
\cite[Table~8.1]{BM93}, with one exception at type $E_8$ and $e=30$,
we have $|W_\bG(\bL_e)|\leq 24$ for all relevant $G$ and $e$. One
can easily check that the minimal character degree $d(G)$ of $G$,
displayed in Table~\ref{table}, is always at least $26$, and so we
are done. For the exception, we have $|W_\bG(\bL_e)|=30$, while
$d(G)\geq q^{27}(q^2-1)$ and so Hypothesis~\ref{hypo} is satisfied.
On the other hand, non-cyclic relative Weyl groups of minimal
$e$-split Levi subgroups for exceptional types are available in
\cite[Table~3.2]{GM20}. We have verified that $|W_\bG(\bL_e)|$
remains at most $d(G)$, except for the specific cases discussed in
the next paragraph.

Consider $G=E_8(q)$, for instance. Then $|W_\bG(\bL_e)|\leq d(G)$
unless $e=2$, $q=2$, and $p=3$. For the exception, $W_\bG(\bL_e)$ is
isomorphic to the Weyl group $W(E_8)=C_2.GO_8^+(2)$ of $E_8$, which
has order $696729600$, while the smallest nontrivial degree
$d(E_8(2))$ of $E_8(2)$ is $545925250$. However, since
$b_3(\bN_G(\bL_e))$ divides $|W_\bG(\bL_e)|$, by
\cite[Corollary~11.29]{Is}, we have $b_3(\bN_G(\bL_e))\leq
696729600_{3'}=2867200< d(G)$, and the result follows as before.
Other exceptions occur for $(G,q,p,e)=(E_7(q)_{sc},2,3,2)$ or $(\ta
E_6(2)_{sc},2,3,2)$, but the arguments are entirely similar.

Next we consider the case where $e$ is not a regular number for
$(\bG,F)$. This includes $e=5$ for type $E_6$; $e=10$ for $^2E_6$;
$e\in\{4,5,8,10,12\}$ for $E_7$; and $e\in\{7,9,14,18\}$ for $E_8$.
Note that $L_e$ is no longer abelian. But, as $S_e:=\bS_e^F$ is
abelian, we shall use the bound
\[
b(\bN_G(P))\leq b(\bN_G(\bL_e))\leq
|\bN_G(\bL_e):S_e|=|W_\bG(\bL_e)||L_e:S_e|
\]
instead.

Let $G=E_7(q)_{sc}$ and $e=4$. Then $|\bN_G(\bL_e)/L_e|=96$ and
$L_e$ has type $\Phi_4^2A_1(q)^3$ (see \cite[Table~3.3]{GM20}). Thus
$|\bN_G(\bL_e):S_e|\leq 96q^3(q^2-1)^3$, which is smaller than
$d(E_7(q)_{sc})$, as desired. In the remaining cases, $W_\bG(\bL_e)$
is cyclic and its order together with the structure of $L_e$ are
again given in \cite[Table~8.1]{BM93}. Note that, in this case, the
Sylow $e$-torus $S_e$ has order $\Phi_e(q)$. It is now
straightforward to check that $|W_\bG(\bL_e)||L_e:S_e|\leq d(G)$ for
all relevant $G$ and $e$.

For Suzuki and Ree groups, Brou\'{e} and Malle introduced an adapted
version of $\Phi_e$, denoted by $\Phi_e^{(p)}$, which are cyclotomic
polynomials over $\QQ(\sqrt{2})$ or $\QQ(\sqrt{3})$ (see
\cite[\S8]{Malle07}). For the Tits group, we can verify
Hypothesis~\ref{hypo} directly using \cite{GAP}; therefore, we
assume that $S\neq \ta F_4(2)'$. With this, there exists a Sylow
$\Phi_e^{(p)}$-torus $\bS$ of $\bG$ such that $\bN_G(\bS)\geq
\bN_G(P)$ for some Sylow $p$-subgroup $P$ of $G$, unless
\begin{enumerate}[\rm(i)]
\item $p=2$ and $G = \ta G_2(q)$, or
\item $p=3$ and $G=\ta F_4(q)$ with $q\equiv2, 5 (\bmod\, 9)$.
\end{enumerate}
(See \cite[Theorem~8.4]{Malle07}.) The case when such a Sylow
$\Phi_e^{(p)}$-torus exists can be argued similarly as above.
(Alternatively, one can use \cite[\S16 and \S17]{IMN07} and
\cite{An98} to achieve the result for these groups.) The case $p=2$
and $G = \ta G_2(q)$ follows from the proof of the McKay inductive
conditions for the group (see \cite[\S17]{IMN07}). When $p=3$ and
$\ta F_4(q)$, according to \cite[(2B)]{An98}, we have
$|\bN_G(P)/P|\leq 48$, and the same reasoning applies.

Finally, the case $p=3$ and $G_2(q)$ (that we previously excluded)
follows from An's proof of the Alperin-McKay conjecture for
$G_2(q)$. (See \cite[p.~190]{An94} where it was shown that
$|\bN_G(P)/P|$ is bounded above by $16$, which is smaller than
$d(G_2(q))$.)
\end{proof}

%%%%%%%%%%%%%%%%%%%%%%%%%%%%%%%%%

\subsection{Exceptional covering groups}

Here we deal with \emph{exceptional covering groups} of finite
simple groups. These include $3$-fold and $6$-fold covers of $\Al_6$
and $\Al_7$, covers of sporadic simple groups (by convention), and
certain covers of simple groups of Lie type with a non-generic Schur
multiplier (see \cite[Table~24.3]{MT11}). Here, a (perfect central)
cover $S$ of a simple group of Lie type $X$ is called exceptional if
it is not a quotient of the finite reductive group of
simply-connected type covering $X$; in particular, $S$ is a proper
cover of $X$.

\begin{pro}
\label{prop:1} Conjecture~\ref{thm:simple} is true when $S$ is an
exceptional covering group.
\end{pro}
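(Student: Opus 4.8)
The plan is to verify Conjecture~\ref{thm:simple} for the finitely many exceptional covering groups directly, essentially group-by-group, using the bound strategy of Proposition~\ref{pro:key}. For each such $S$ and each relevant prime $p$ (necessarily $p$ dividing $|S|$, and only finitely many possibilities since the list of exceptional covers is finite), the first thing I would do is try to verify Hypothesis~\ref{hypo}, or even the stronger $d(S)\geq b(\bN_S(P))$. The minimal nontrivial (or $p'$-) degrees $d(S)$, $m_p(S)$ are available from the Atlas \cite{Atlas} and the GAP character table library \cite{GAP}, and $b_p(\bN_S(P))$ can be computed directly since $\bN_S(P)$ is a concrete small group. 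In the overwhelming majority of cases this inequality holds, and then Proposition~\ref{pro:key} immediately gives Conjecture~\ref{thm:simple}: any bijection $\Psi:\Irr_{p'}(S)\to\Irr_{p'}(\bN_S(P))$ fixing the trivial character works for the degree condition, and a McKay-good (central-isomorphism) bijection exists by \cite[Theorem~B]{CS}, so one only needs to arrange that this bijection (or a modification of it) sends $1_S\mapsto 1_{\bN_S(P)}$, which is harmless.

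For the residual cases where Hypothesis~\ref{hypo} fails --- these will be a short explicit list, presumably involving small primes dividing the Schur multiplier, e.g. $p=2$ or $p=3$ for the triple/sextuple covers of $\Al_6,\Al_7$, and a handful of covers of sporadic groups or Lie type groups with exceptional multiplier --- I would argue more carefully. The idea is the standard one used elsewhere in the paper (cf.\ the unitary case in Proposition~\ref{prop:defining-char}): order $\Irr_{p'}(S)$ and $\Irr_{p'}(\bN_S(P))$ by degree, and build the bijection $\Psi$ greedily from the top down, matching the characters of largest $p'$-degree in $\bN_S(P)$ with characters of $S$ of at least that degree. Concretely one checks, using the explicit character tables, that for each threshold $d$ the number of characters in $\Irr_{p'}(\bN_S(P))$ of degree $\geq d$ is at most the number in $\Irr_{p'}(S)$ of degree $\geq d$; this inequality, which can be read off the tables, is exactly what is needed to produce a degree-decreasing bijection. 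Since \cite[Theorem~B]{CS} guarantees the existence of \emph{some} McKay-good bijection and the inductive McKay condition is already known for these groups, one then needs to check $A$-equivariance can be maintained; here $A=\Aut(S)_P$ is small and acts on a small set, so one invokes \cite[Lemma~13.23]{Is} (as in the proof of Proposition~\ref{prop:defining-char}) to glue together $A$-equivariant degree-decreasing bijections orbit-block by orbit-block, comparing fixed-point counts under each subgroup $B\leq A$.

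The main obstacle I expect is bookkeeping rather than conceptual: one must correctly identify $\bN_S(P)$ and its $p'$-character degrees for each exceptional cover and each prime, keep track of how the center (and the covering) affects which characters are faithful and how $\Aut(S)$ acts on Lusztig/Atlas labels, and ensure that in the problematic cases the top-down degree comparison genuinely holds --- there is a real risk that for one or two small groups (a sextuple cover of $\Al_7$ at $p=2$, say, or $2.B$, $6.\mathrm{Suz}$, etc.) the naive count fails at some threshold and one must use the finer central-isomorphism/$A$-equivariance structure, or handle a specific character (analogous to the character $\xi$ produced by Lemma~\ref{lem:SU}) by hand. Since there are only finitely many groups and the relevant data is all tabulated, I would ultimately reduce the proof to: (1) a table listing, for each exceptional cover $S$ and each prime $p\mid |S|$, whether $d(S)\geq b_p(\bN_S(P))$ holds, and (2) a case-by-case treatment of the finitely many exceptions via the degree-ordering argument combined with \cite[Lemma~13.23]{Is} and the known inductive McKay data \cite{CS}.
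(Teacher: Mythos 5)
Your overall strategy is the same as the paper's: verify Hypothesis~\ref{hypo} case by case so that Proposition~\ref{pro:key} applies, and handle the finitely many failures by an explicit degree-aware, equivariant matching. The paper's execution differs in three ways that your sketch would need to absorb. First, for every prime with $|S|_p=p$ (cyclic Sylow $p$-subgroups) the paper does not compute anything: it invokes Dade's correspondence \cite{Dade66,Dade96}, observes that the induced-from-linear description of that bijection forces the degree condition (via $|G:C_{a-1}|\geq|\bN_G(D):\bN_{C_{a-1}}(D)|$), and quotes Koshitani--Sp\"ath \cite{KS16} for McKay-goodness; this matters because for covers such as $2\cdot B$ or the covers of $\ta E_6(2)$ at large primes, ``compute $b_p(\bN_S(P))$ directly in GAP'' is not realistic. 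Second, even when $p^2\mid|S|$ the paper replaces raw computation by structural input for the big groups: Wilson's description of $\bN_X(Q)/Q'$ for sporadic groups \cite{Wilson98}, Malle's identification of the Sylow normalizers of $2\cdot F_4(2)$ and of the covers of $\ta E_6(2)$ \cite{Malle08} (together with bounds imported from the proof of Proposition~\ref{prop:defining-char} for the normalizers inside $SL_4(3)$ and $\Omega_7(3)$, and character bounds for extensions by the complex reflection groups $G_8$, $G_5$). The actual list of failures of Hypothesis~\ref{hypo} turns out to be short --- $(Co_2,5)$, $(Co_3,3)$, $(Co_3,5)$, $(McL,5)$, $(3\cdot McL,5)$ --- and there the fix is exactly the one you propose: $S$ has a unique $p'$-character below $b(\bN_S(P))$, and one pairs it with an outer-automorphism-invariant character of $\bN_S(P)$ of smaller degree (a linear character for $Co_2,Co_3$; for $McL$ and $3\cdot McL$ one checks $\bN_S(P)=P\rtimes(C_3\rtimes C_8)$ has a unique degree-$24$ character and everything else of degree $1$ or $2$).

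The one substantive gap in your write-up is the central isomorphism condition. Gluing $A$-equivariant degree-decreasing bijections via \cite[Lemma~13.23]{Is} gives equivariance, but Conjecture~\ref{thm:simple} also demands $(S\rtimes A_\chi,S,\chi)\geq_c(\bN_S(P)\rtimes A_\chi,\bN_S(P),\Psi(\chi))$, and rearranging a McKay-good bijection from \cite{CS} does not automatically preserve this. The paper's licence to rearrange freely is the observation (resting on Malle's verification \cite{Malle08}) that for exceptional covers the outer automorphism group stabilizing $P$ is trivial or cyclic of prime order --- with the single exception $S=2\cdot PSL_3(4)$ --- so that \emph{any} equivariant bijection is McKay-good. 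Without recording this (or some equivalent criterion), your modified bijections in the exceptional cases, and even the harmless-looking swap sending $1_S\mapsto 1_{\bN_S(P)}$, are not justified to satisfy $\geq_c$.
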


\begin{proof}
The existence of a McKay-good bijection for exceptional covering
groups was established by Malle \cite{Malle08}. We note that, except
the single case $S=2\cdot PSL_3(4)$, the group of outer
automorphisms stabilizing $P\in\Syl_p(S)$ is either trivial or
cyclic of prime order, and so any bijection that respects the action
of those outer automorphisms is McKay-good. Building on Malle's
work, we show that in most cases, Hypothesis~\ref{hypo} holds,
ensuring that any such bijection automatically satisfies the
additional degree condition. In the remaining cases, it turns out
that $S$ has precisely one character of degree smaller than
$b(\bN_S(P))$. In these instances, it suffices to identify a
corresponding ($p'$-degree) character of $\bN_S(P)$ with smaller
degree that is invariant under the action of the outer
automorphisms.

Throughout we let $X:=S/Z$ and $Q:=PZ/Z\in\Syl_p(X)$, where
$Z:=\bZ(S)$. Of course we may assume that $p\mid |S|$.

First suppose that $|S|_p = p$; in particular, $p$ is odd. In this
case, work of Dade \cite{Dade66,Dade96} provides a natural bijection
between the irreducible characters in any block $B$ of $S$ and those
in its Brauer correspondent $b$, and this bijection satisfies the
required degree condition. We may assume that $B$ has full defect.
When $|S|_p=p$, Dade's bijection is described in Lemmas~4.7--4.10 of
\cite{Dade96} and is known, in particular, to preserve decomposition
numbers. Hence it suffices to verify the desired degree inequality
for the corresponding bijection between the Brauer characters of $B$
and $b$ given in \cite[Lemma~4.7]{Dade96}.

This bijection between Brauer characters is defined as follows. The
Green correspondence sends the isomorphism classes of
finite-dimensional non-projective indecomposable $kS$-modules $M$
belonging to $B$ bijectively onto the isomorphism classes of
finite-dimensional non-projective indecomposable $k\bN_S(P)$-modules
$\widetilde{M}$, where $k$ is a suitable residue field of
characteristic $p$ (see \cite[III.5]{Feit82}). Moreover, this
correspondence satisfies the degree inequality $\dim(M)\ge
\dim(\widetilde{M})$. If $M$ is simple, the socle $S(\widetilde{M})$
of $\widetilde{M}$ lies in a uniquely determined isomorphism class
of simple $k\bN_S(P)$-modules. Dade's bijection then sends the
Brauer character afforded by $M$ to that afforded by
$S(\widetilde{M})$, and therefore also satisfies the degree
inequality. (We also note that in a recent preprint \cite{Li25},
Linckelmann proved that, when a defect group of $B$ is cyclic in
general, there exists a perfect isometry between $\ZZ \Irr(B)$ and
$\ZZ \Irr(b)$ with the degree condition.) Furthermore, Koshitani and
Sp\"ath \cite{KS16} proved that Dade's bijection fulfills the
inductive Alperin--McKay condition, and hence is McKay-good. We
assume from now on that $p^2\mid |S|$.

\medskip

I) Consider the sporadic groups. The structure of the quotient group
$\bN_X(Q)/Q'$ is given in \cite{Wilson98}. When
$X\in\{Fi'_{24},B,M\}$, we have checked that $m(S)\geq
|Z||\bN_X(Q)/Q|\geq |\bN_S(P)/P|$, thereby confirming that
Hypothesis~\ref{hypo} holds. For the remaining sporadic groups,
computations using \cite{GAP} reveal that Hypothesis~\ref{hypo}
fails in the following cases. We include here the relevant values of
$m(S)$ and $b(\bN_S(P))$.
\begin{align*}(S,p,b(\bN_S(P)),m(S))\in\{&(Co_2, 5, 24, 23), (Co_3, 3, 32, 23),
(Co_3, 5, 24, 23),\\ &(McL, 5, 24, 22), (3\cdot McL, 5, 24, 22)\}.
\end{align*}
When $S\in\{Co_2,Co_3\}$, as the outer automorphism group of $S$ is
trivial, any bijection (from $\Irr_{p'}(S)$ to
$\Irr_{p'}(\bN_S(P))$) is McKay-good. In these cases, $S$ has a
unique irreducible $p'$-degree character, say $\psi$, of degree
smaller than $b(\bN_S(P))$. Furthermore, we observe that $\bN_S(P)$
is not perfect. (Indeed, $\bN_S(P)/P' \cong 5^2 \rtimes (4 \cdot
\Sy_4)$ when $S = Co_2$, and $\bN_S(P)/P' \cong \Sy_3 \times
(3^2:SD_{16})$ when $S = Co_3$.) Consequently, $\bN_S(P)$ has a
nontrivial linear character, say $\tau$. It then follows that any
bijection sending $\psi$ to $\tau$ satisfies the required degree
condition.

Let $(S,p)\in\{(McL, 5),(3\cdot McL, 5)$\}. In both cases, the outer
automorphism group $\Out(S)\cong C_2$ normalizes $P$. Moreover, $S$
has exactly one irreducible character of degree less than
$b(\bN_S(P))=24$, and this character is necessarily
$\Out(S)$-invariant. Suppose first that $S=McL$. By \cite{Atlas},
$\bN_S(P)=P\rtimes D$, where $P$ is an extra-special group of order
$5^3$ and exponent $5$, and $D=C_3\rtimes C_8$. The group $D$ acts
Frobeniusly on $\Irr(P/P')\cong C_5\times C_5$, and hence $\bN_S(P)$
has a unique irreducible character of degree 24. This is the only
$5'$-degree character of $\bN_S(P)$ of degree greater than
$m(S)=22$. All other $5'$-degree irreducible characters of
$\bN_S(P)$ are trivial on $P$, and thus can be viewed as characters
of $D$, with degrees $1$ and $2$. Now, the required bijection can be
constructed in the way that this degree-24 character corresponds to
(any) $\Out(S)$-invariant irreducible character of $S$ of degree
greater than $22$. (The group $McL$ indeed has several such
characters.) When $S=3\cdot McL$, again $\Irr_{5'}(\bN_S(P))$
contains a single member of degree 24. As in the case $S=McL$, this
is the only $5'$-degree character of $\bN_S(P)$ of degree greater
than $m(S)=22$, and the same reasoning applies.

\medskip

II) The simple groups of Lie type with a non-generic Schur
multiplier are: \begin{align*}&PSL_2(4), PSL_2(9), PSL_3(2),
PSL_3(4), PSL_4(2), PSU_4(2), PSU_4(3),\\ &PSU_6(2),\ta B_2(8),
P\Omega_7(3), PSp_6(2), P\Omega^+_8(2), G_2(3), G_2(4), F_4(2),
\text{ and } \ta E_6(2).\end{align*} (See \cite[Table~24.3]{MT11}.)

Consider the (only) exceptional cover $S=2\cdot F_4(2)$ of
$X=F_4(2)$. When $p=2$, as all the faithful irreducible characters
of $S$ have even degree (\cite{Atlas}), the problem is reduced to
the simple group $F_4(2)$, which was already solved in
Proposition~\ref{prop:defining-char}. When $p=3$, the Sylow
normalizer $\bN_X(Q)$ of $X$ is contained in $PSL_4(3).2_2$ and
lifts to the direct product $C_2\times \bN_X(Q)$ in $S$, as
described in \cite[\S4]{Malle08}. From the proof of
Proposition~\ref{prop:defining-char}, the maximal $3'$-degree of the
Sylow $3$-normalizer of $SL_4(3)$ is at most $(3-1)^3/2=4$, implying
$b(\bN_S(P))=b(\bN_X(Q))\leq 8$. When $p=5$, $\bN_X(Q)$ is an
extension of $C_5^2$ with the complex reflection group $G_8$, which
has the structure $C_4.\Sy_4$. In this case,
$\bN_S(P)=Z\times\bN_X(Q)$, and elementary character theory yields
$b(\bN_S(P))\leq 24$. Similarly, for $p=7$, $\bN_X(Q)$ is an
extension of $C_7^2$ with the complex reflection group $G_5$ of
structure $C_6.\Al_4$. Here, it can be shown that $b(\bN_S(P))\leq
48$. As $m(S)=52$ (\cite{Atlas}), Hypothesis~\ref{hypo} holds for
all the relevant $p$.

Consider $X=\ta E_6(2)$. This group has two exceptional covers
$2\cdot X$ and $6\cdot X$ with cyclic center. %(Since the cover
%$2^2\cdot X$ has no faithful characters and
%Conjecture~\ref{thm:simple} without the degree condition has been
%known for all finite groups, the problem for $2^2\cdot X$ is reduced
%to the one for $2\cdot X$.)
When $p=2$, as in the case $S=2\cdot F_4(2)$, the problem reduces to
handling the covers $X$ and $3\cdot X$, which are actually
non-exceptional covers of $X$, and we are done by
Proposition~\ref{prop:defining-char}. Consider $p=3$. Then
$\bN_X(Q)$ is contained in $\Omega_7(3)$ and is lifted to the direct
product $C_2\times \bN_X(Q)$ in $2\cdot X$. The proof of
Proposition~\ref{prop:defining-char} shows that the maximal
$3'$-degree of the Sylow $3$-normalizer of $\Omega_7(3)$ is at most
$4$, and therefore $b(\bN_S(P))=b(\bN_X(Q))\leq 4$ for $S=2\cdot X$.
For the six-fold cover $S=6\cdot X$, as the $3'$-degree irreducible
characters of both $S$ and $\bN_S(P)$ are trivial on $\bZ(S)_3$, we
still have $b(\bN_S(P))=b(\bN_{S/\bZ(S)_3}(P/\bZ(S)_3))\leq 4$. When
$p=5$, \cite[Table~2]{Malle08} shows that the largest $5'$-degree of
the Sylow normalizer of both $2\cdot X$ and $6\cdot X$ (and of $X$
itself as well) is $48$. %The Sylow $5$-normalizer in $X$ has Schur
%multiplier of order 3 (see the last paragraph of
%\cite[p.~460]{Malle08}), and so this largest degree of $2^2\cdot X$
%is again 48.
For $p=7$, as mentioned in \cite[\S4]{Malle08}, the
Sylow $7$-normalizer of $X$ is contained in $F_4(2)$ and moreover,
$F_4(2)$ is lifted to $2\cdot F_4(2)$ in $2\cdot X$ and to $3\times
2\cdot F_4(2)$ in $6\cdot X$. It follows from the previous paragraph
that $b(\bN_S(P))\leq 48$, whether $S$ is the double or 6-fold cover
of $X$. On the other hand, the smallest nontrivial degree of $6\cdot
\ta E_6(2)$ is $1938$ (\cite{GAP}). Hypothesis~\ref{hypo} again
holds true in this case.

Finally, using \cite{GAP}, we have checked that
Hypothesis~\ref{hypo} also holds for all other exceptional covers of
the simple groups of Lie type listed above, as well as for the
3-fold and 6-fold covers of $\Al_6$ and $\Al_7$. The proof is
complete.
\end{proof}

A few remarks are in order. First,
Propositions~\ref{prop:defining-char}, \ref{prop:cross-char}, and
\ref{prop:1} together imply that Conjecture~\ref{thm:simple} holds
for every quasisimple group \(S\) of exceptional Lie type and for
every prime \(p\). Second, our arguments show that, in all cases
considered, Hypothesis~\ref{hypo} holds for \((S,p)\), except
possibly in the following situations: \(S=PSU_n(q)\) with \(n\ge 3\)
odd, \((n,q+1)=1\), and \(q\) a power of \(p\); when $S$ is sporadic
and $|S|_p=p$; or when $(S,p)\in\{(Co_2,5), (Co_3,3), (Co_3,5),
(McL,5), (3\cdot McL,5)\}$.

%%%%%%%%%%%%%%%%%%%%%%%%%%%%%%%%%%%%%%%%%%%%%%%%%%%%%%

\section{Odd-degree characters}\label{sec:p=2}

In this section we confirm Conjectures~\ref{thm:simple},
\ref{conj:inductive Giannelli-McKay}, and \ref{conj:Giannelli-McKay}
for $p=2$, thereby proving Theorem~B.

\begin{thm}\label{thm:p=2}
Conjecture~\ref{thm:simple} is true for all quasisimple groups $S$
and $p=2$.
\end{thm}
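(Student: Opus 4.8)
The plan is to reduce the statement to a case analysis over the isomorphism types of quasisimple groups $S$, exploiting the structure of Sylow $2$-subgroups and the machinery already assembled. Recall that by Proposition~\ref{pro:key}, it suffices to verify Hypothesis~\ref{hypo} — namely $m_2(S)\geq b_2(\bN_S(P))$ for $P\in\Syl_2(S)$ — whenever possible, since the McKay-good bijection is already known to exist for all finite groups by \cite[Theorem~B]{CS}, and the $A$-equivariance is automatic once the degree condition holds. So the bulk of the proof is a verification of Hypothesis~\ref{hypo}, supplemented, in the sporadic-type exceptions and the unitary-group case, by the kind of hands-on argument already exhibited in Propositions~\ref{prop:defining-char}, \ref{prop:cross-char}, and \ref{prop:1}.

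First I would dispose of the groups of Lie type in defining characteristic $2$: these are covered directly by Proposition~\ref{prop:defining-char}, which already handles non-exceptional covers of simple groups of Lie type in characteristic $p$ for all $p$ (in particular $p=2$), with the unitary-group subtlety resolved via Lemma~\ref{lem:SU} and Lemma~\ref{lem:Weil-character}. Next, the exceptional covering groups are handled by Proposition~\ref{prop:1}, again valid for all $p$ including $p=2$. This leaves the groups of Lie type in odd characteristic and the alternating groups. For the alternating (and symmetric-related) covers $S$, the key point for $p=2$ is that a Sylow $2$-subgroup $P$ of $\mathsf{A}_n$ or $\mathsf{S}_n$ has a comparatively small self-normalizer quotient, while $m_2(\mathsf{A}_n)$ grows; one can invoke the explicit description of odd-degree characters of symmetric groups (hooks) together with Macdonald's formula, and the structure of $\bN_{\mathsf{S}_n}(P)$ as an iterated wreath product, to check Hypothesis~\ref{hypo} for $n$ not too small, handling small $n$ and the exceptional covers $2\cdot\mathsf{A}_n$, $6\cdot\mathsf{A}_n$ by direct computation with \cite{GAP}.

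For groups of Lie type in odd characteristic with $p=2$, the exceptional-type cases are already subsumed in Proposition~\ref{prop:cross-char} (which treats exceptional Lie type in characteristic $\neq p$ for all $p$), so what remains is the classical groups $\PSL_n(q)$, $\PSU_n(q)$, $\PSp_{2n}(q)$, $\PSU$ and orthogonal families with $q$ odd. Here the strategy is to locate $P\in\Syl_2(G)$ inside the normalizer of a Sylow $e$-torus, where $e\in\{1,2\}$ is the order of $q$ modulo $4$, so that $\bN_G(P)\leq \bN_G(\bL_e)$ and hence $b_2(\bN_G(P))\leq b_2(\bN_G(\bL_e))$ by Lemma~\ref{lem:1}; then bound $b_2(\bN_G(\bL_e))$ in terms of $|W_\bG(\bL_e)||L_e\colon S_e|$, which for classical groups is roughly polynomial in $q$ of degree $\leq n$ times a $2$-part of a Weyl group, and compare with $m_2(G)$, which for classical groups grows like $q^{n-1}$ or larger by \cite{TZ96,Nguyen10}. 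The comparison succeeds outside a short list of small cases $(G,q)$, which are then checked directly.

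The hard part will be the low-rank and small-$q$ classical groups in odd characteristic where Hypothesis~\ref{hypo} genuinely fails — most notably the linear and unitary groups $\SL_n(q)$, $\SU_n(q)$ with small $n$ — because there the Sylow $2$-normalizer can have a $2'$-character of large degree (coming from the action of a torus of order a power of $q\pm1$ on $P/P'$), yet the smallest odd-degree character of $G$ need not dominate it. For these I expect to argue as in the unitary-group portion of Proposition~\ref{prop:defining-char}: one singles out the unique (or few) large-degree $2'$-character $\tau$ of $\bN_S(P)$, which is automatically $\Aut$-stable by its uniqueness, produces a matching $\Aut$-stable odd-degree character of $S$ of at least that degree (a suitable semisimple character), and then builds the rest of the $\Aut$-equivariant bijection on the complements using \cite[Lemma~13.23]{Is}. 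Keeping careful track of which torus $T$ of $\bN_G(P)$ has a regular orbit on $\Irr(P/P')$, and of the $2$-rational structure of the semisimple character one produces, is the delicate bookkeeping that makes this the main obstacle; the exceptional covers $2\cdot\PSL_3(4)$ and similar small groups with non-cyclic outer automorphism action would be finished off by direct computation.
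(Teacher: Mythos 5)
Your overall reduction (defining characteristic via Proposition~\ref{prop:defining-char}, exceptional covers via Proposition~\ref{prop:1}, exceptional types in cross characteristic via Proposition~\ref{prop:cross-char}, then classical groups in odd characteristic) matches the paper, but your treatment of the remaining classical-group case has a genuine gap. You propose to bound $b_2(\bN_G(P))$ by embedding $\bN_G(P)$ in the normalizer of a Sylow $e$-torus and using $b_2(\bN_G(\bL_e))\leq |W_\bG(\bL_e)|\,|L_e:S_e|$, as in the exceptional-type argument. For classical groups of unbounded rank this bound is hopeless: the relative Weyl group is essentially the full Weyl group (of order $2^n n!$ or $n!$), and even its largest \emph{odd} character degree grows super-exponentially in $n$, whereas $m_2(G)$ for fixed small odd $q$ (say $q=3$) grows only like $q^{n}$. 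So the comparison fails not for ``a short list of small cases'' but for infinitely many pairs $(n,q)$, and your fallback (a hand-crafted bijection as in the unitary defining-characteristic case) is not set up to cover an infinite family. The ingredient you are missing is the actual structure of Sylow $2$-normalizers of classical groups: by Kondrat'ev, $\bN_{\widetilde G}(Q)=\bN_{\widetilde G}(R)=R\,\bC_{\widetilde G}(R)$ for $\widetilde G=GL_n^{\pm}(q)$, and by Carter--Fong this equals $R\times (C_{(q\mp1)_{2'}})^t$; hence $b(\bN_G(Q))\leq \mathbf{b}(R/Q')\leq |R:Q|\leq q+1$, which is below $m_2(G)$ in all cases (with $PSL_2(q)$, $PSp_{2n}(q)$ and the orthogonal groups handled by self-normalizing Sylow $2$-subgroups or $|\bN_G(Q)/Q|=3^t$). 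In particular, Hypothesis~\ref{hypo} holds for \emph{all} classical groups at $p=2$, so the ``hard part'' you anticipate (low-rank linear/unitary groups violating the hypothesis and needing an adjusted equivariant bijection) does not arise and would in any case not be reachable by your torus bound.

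A secondary inaccuracy: for the alternating groups you plan a hook/Macdonald analysis and propose to finish the double covers $2\cdot\mathsf{A}_n$ ``by GAP.'' These are not exceptional covers, they form an infinite family, and no computation is needed: Sylow $2$-subgroups of $\mathsf{A}_n$ and of $2\cdot\mathsf{A}_n$ are self-normalizing (Olsson), so Hypothesis~\ref{hypo} is immediate there. This part is fixable, but as written your argument for the classical groups does not close.
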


\begin{proof}
By Propositions \ref{prop:1}, \ref{prop:defining-char}, and
\ref{prop:cross-char}, and noting that Sylow $2$-subgroups of an
alternating group or its double cover are self-normalizing
\cite{Olsson76}, we only need to consider classical groups over
fields of odd characteristic. Furthermore, we may assume that $S$ is
a non-exceptional covering group of the simple group $X:=S/\bZ(S)$.
As before, we use $G$ for the finite reductive group of
simply-connected type such that $G/\bZ(G)=X$, and so $S$ is a
certain quotient of $G$. By Proposition~\ref{pro:key}, the existence
of a required bijection is guaranteed if we are able to show that
\begin{equation}\label{eq:2}b(\bN_G(Q))\leq m(G)\end{equation} for $Q\in\Syl_2(G)$.

Let $X=PSL_2(q)$ with $5\leq q$ odd, and hence $G=SL_2(q)$. Assume
first that $q\equiv \pm 3(\bmod\,8)$. Then $Q$ is the quaternion
group of order $8$ and $\bN_G(Q)$ is isomorphic to $SL_2(3)$ (see
\cite[\S15E]{IMN07}). We have $b(\bN_G(Q))=3$, while $m(G)=(q-1)/2$
if $q\equiv 3(\bmod\,8)$ and $m(G)=(q+1)/2$ if $q\equiv
-3(\bmod\,8)$, and so (\ref{eq:2}) is satisfied. When $q\equiv \pm
1(\bmod\,8)$, $Q$ is self-normalizing and the inequality is trivial.

Let $X=PSL^\pm_n(q)$ with $n\geq3$. Here, as usual, we use the
superscript $+$ for linear groups, while $-$ for unitary groups.
Then $G=SL^\pm_n(q)$. Let $\widetilde{G}:=GL_n^\pm(q)$,
$R\in\Syl_2(\widetilde{G})$, and take $Q:=R\cap G$. By
\cite[Theorem~1]{Kon05}, we have
\[\bN_{\widetilde{G}}(Q)=\bN_{\widetilde{G}}(R)=R\bC_{\widetilde{G}}(R).\]
The structure of Sylow normalizers in $\widetilde{G}$ was determined
by Carter and Fong in \cite[Lemma~6 and Theorem~4]{CF64}, as follows
\[
\bN_{\widetilde{G}}(R)\cong R\times (C_{(q \mp 1)_{2'}})^t,
\]
where $(q\mp 1)_{2'}$ is the odd part of $q\mp 1$ and $t$ is the
number of terms in the $2$-adic expansion of $n$. It follows that
\[\bN_{\widetilde{G}}(Q)/Q'=\bN_{\widetilde{G}}(R)/Q'\cong (R/Q')\times (C_{(q\mp
1)_{2'}})^t.\] Let $\textbf{b}(M)$ denote the largest degree of an
irreducible character of a finite group $M$. We have
\[
\textbf{b}(\bN_{\widetilde{G}}(Q)/Q')= \textbf{b}(R/Q')\leq
|R:Q|\leq q+1.
\]
Here, the inequality in the middle follows from
\cite[Corollary~11.29]{Is} and the fact that $Q/Q'$ is an abelian
normal subgroup of $R/Q'$. The last inequality follows from
$|R:Q|=|R:(R\cap G)|=|RG:G|\leq |\widetilde{G}:G|\leq q+1$. Since
$\bN_{G}(Q)/Q'$ is a normal subgroup of $\bN_{\widetilde{G}}(Q)/Q'$,
we deduce that
\[b(\bN_{G}(Q))= \textbf{b}(\bN_{G}(Q)/Q')\leq
\textbf{b}(\bN_{\widetilde{G}}(Q)/Q')\leq q+1.\] The desired
inequality $b(\bN_G(Q))\leq m(G)$ then follows immediately from the
bound provided in Table~\ref{table}.

Next, consider $X=PSp_{2n}(q)$ with $n\geq 2$ and $q$ odd. Then
$G=Sp_{2n}(q)$ with $\bZ(G)$ being cyclic of order $2$. The Sylow
$2$-subgroup $Q$ is self-normalizing in $G$ when $q\equiv \pm
1(\bmod\,8)$; otherwise, $|\bN_G(Q)/Q|=3^t$ where $t$ is the number
of terms in the $2$-adic expansion of $n$ (see
\cite[Theorem~4]{CF64}). In the former case, Hypothesis~\ref{hypo}
is trivial. In the latter, we have
\[
b(\bN_G(Q))\leq |\bN_G(Q)/Q| =3^t<(q^n-1)/2=d(G)\leq m(G),
\]
and we are done again.

Lastly, consider $X=\Omega_{2n+1}(q)$ with $n\geq 3$ or
$X=P\Omega_{2n}^\pm(q)$ with $n\geq 4$ ($q$ again is odd). Here
$G=Spin_{2n+1}(q)$ or $Spin_{2n}^\pm(q)$, respectively. According to
\cite[Theorem~5]{CF64}, Sylow $2$-subgroups of $G$ are
self-normalizing, and we conclude as before.
\end{proof}

\begin{thm}\label{thm:p=2Giannelli}
Conjecture~\ref{conj:Giannelli-McKay} is true when $p=2$.
\end{thm}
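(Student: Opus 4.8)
The plan is to derive this statement directly from the two main ingredients already assembled: the quasisimple verification of Theorem~\ref{thm:p=2} and the reduction theorem, Theorem~\ref{thm:GMC reduction}. No new group theory is needed; the point is simply to check that the hypotheses of the reduction are met at $p=2$.

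First I would note that Theorem~\ref{thm:p=2} establishes Conjecture~\ref{thm:simple} for every quasisimple group $S$ with cyclic center when $p=2$, and that Conjecture~\ref{thm:simple} implies Conjecture~\ref{conj:inductive Giannelli-McKay} for the same $S$: one takes the intermediate $A$-stable subgroup $M$ of Conjecture~\ref{conj:inductive Giannelli-McKay} to be $\bN_S(P)$ itself, which is proper in $S$ because a quasisimple group has no normal Sylow $2$-subgroup, while the $A$-equivariant bijection with the degree inequality and the central character-triple isomorphisms is exactly the one produced by Conjecture~\ref{thm:simple}. Thus Conjecture~\ref{conj:inductive Giannelli-McKay} holds for every quasisimple group with cyclic center at the prime $2$.

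Next, for an arbitrary finite group $G$ the hypothesis of Theorem~\ref{thm:GMC reduction} is then satisfied with $p=2$: for every simple group $X$ involved in $G$ and every quasisimple $S$ with $S/\zent S\cong X$ and $\zent S$ cyclic, Conjecture~\ref{conj:inductive Giannelli-McKay} holds by the previous step. Applying Theorem~\ref{thm:GMC reduction} with $Z=1$ and $\lambda=1_Z$ yields a bijection $\Omega\colon\Irr_{2'}(G)\to\Irr_{2'}(\bN_G(P))$ with $\Omega(\chi)(1)\le\chi(1)$ for all $\chi\in\Irr_{2'}(G)$, which is precisely Conjecture~\ref{conj:Giannelli-McKay} for $p=2$, and we are done.

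In terms of difficulty, there is essentially nothing left to overcome at this stage: all the substance lies in Theorem~\ref{thm:p=2} (which, via Proposition~\ref{pro:key}, rests on the degree bounds $b(\bN_G(Q))\le m(G)$ for classical groups in odd characteristic and on the exceptional-type and cross-characteristic cases of Section~\ref{sec:quasisimple}) and in the reduction machinery underlying Theorem~\ref{thm:GMC reduction}. The only points requiring care are to invoke Conjecture~\ref{thm:simple} rather than the bare inductive McKay condition, so that the central character-triple isomorphisms demanded by Theorem~\ref{thm:GMC reduction} are available, and to observe that no appeal to the Feit--Thompson theorem is needed, since Theorem~\ref{thm:p=2} treats all quasisimple groups uniformly.
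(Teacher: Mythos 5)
Your proposal is correct and is essentially the paper's own argument: the paper proves Theorem~\ref{thm:p=2Giannelli} by citing exactly Theorems~\ref{thm:p=2} and \ref{thm:GMC reduction}, with the implicit step (which you spell out) that Conjecture~\ref{thm:simple} implies Conjecture~\ref{conj:inductive Giannelli-McKay} by taking $M=\bN_S(P)$. Your additional remarks on $\bN_S(P)<S$ and on not needing Feit--Thompson are accurate but inessential.
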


\begin{proof}
This follows from Theorems \ref{thm:p=2} and \ref{thm:GMC
reduction}.
\end{proof}

As mentioned in the introduction, Theorem~B immediately follows from
Theorem~\ref{thm:p=2Giannelli}.

We conclude with some remarks. To complete the proof of
Conjecture~\ref{conj:A} and Conjecture~\ref{conj:Giannelli-McKay}
for all primes $p$, it remains to verify
Conjecture~\ref{conj:inductive Giannelli-McKay} for quasisimple
classical groups $S$ in characteristic different from $p$, as well
as for covers of alternating groups. This appears to be a nontrivial
problem, since for these groups the normalizer $\bN_S(P)$ typically
has many irreducible $p'$-characters whose degrees exceed the
minimal $p'$-degree of $S$.

Conjecture~\ref{conj:Giannelli-McKay} has now been established for
symmetric groups by Giannelli \cite{G}. It may be possible to adapt
the methods of \cite{G} to prove Conjecture~\ref{conj:inductive
Giannelli-McKay} for alternating groups.

%%%%%%%%%%%%%%%%%%%%%%%%%%%%%%%%%%%%%

%%%%%%%%%%%%%%%%%%%%%%%%%%%%%%%%%%%%%%%
%%%%%%%%%%%%%%%%%%%%%%%%%%%%%%%%%%%%%%%%%
%%%%%%%%%%%%%%%%%%%%%%%%%%%%%%%%%%%%%%%%%%%

\end{document}